\newtheorem{thm}{Theorem}
\newtheorem{lem}{Lemma}
\newtheorem{cor}{Corollary}
\newtheorem{prop}{Proposition}
\newcounter{myalgctr}
\newenvironment{rem}{
   \vskip1mm\indent
   \refstepcounter{myalgctr}
   \textbf{Remark \themyalgctr}
   }{\hfill$\diamond$\par}  
\numberwithin{myalgctr}{section}
\providecommand{\norm}[1]{\left\lVert#1\right\rVert}
\DeclareMathOperator*{\argmin}{arg\,min}
\def\namedlabel#1#2{\begingroup
    #2%
    \def\@currentlabel{#2}%
    \phantomsection\label{#1}\endgroup
}
\newcommand{\vertiii}[1]{{\left\vert\kern-0.25ex\left\vert\kern-0.25ex\left\vert #1
    \right\vert\kern-0.25ex\right\vert\kern-0.25ex\right\vert}}
\newcommand{\RIP}{\mbox{RIP}}
\title{\bf Uniform-in-Submodel Bounds for Linear Regression in a Model Free Framework}
\author[1]{Arun Kumar Kuchibhotla
    \thanks{Email: {\tt arunku@cmu.edu}}}
\author[2]{Lawrence D. Brown}
\author[2]{Andreas Buja}
\author[2]{Edward I. George}
\author[2]{Linda H. Zhao}
\affil[1]{Department of Statistics \& Data Science, Carnegie Mellon University}
\affil[2]{Department of Statistics, The Wharton School, University of Pennsylvania.}
\date{}
\begin{document}

\maketitle







\begin{abstract}
  For the last two decades, high-dimensional data and methods have
  proliferated throughout the literature.  Yet, the classical
  technique of linear regression has not lost its usefulness in
  applications.  In fact, many high-dimensional estimation techniques
  can be seen as variable selection that leads to a smaller set of
  variables (a ``sub-model'') where classical linear regression
  applies.  We analyze linear regression estimators resulting from
  model-selection by proving estimation error and linear
  representation bounds uniformly over sets of submodels.  Based on
  deterministic inequalities, our results provide ``good'' rates when
  applied to both independent and dependent data.  These results are
  useful in meaningfully interpreting the linear regression estimator
  obtained after exploring and reducing the variables and also in
  justifying post model-selection inference.  All results are derived
  under no model assumptions and are non-asymptotic in nature.\let\thefootnote\relax\footnotetext{The authors thank Abhishek Chakrabortty for discussions that led to Remark~\ref{rem:GramCov}. {We also thank the reviewers and the editor for their constructive comments which have led to a better presentation.}}
\end{abstract}
\section{Introduction and Motivation}
Least squares linear regression is one of the most widely used prediction tools in practical data analysis. With its simple form, linear regression leads to interpretable results and in many cases has predictive performance on par with sophisticated/complex models. It is, however, an open secret that in most cases the set of covariates used in the final linear regression model is rarely the same as the set of covariates initially considered by the data analyst. This is typically a consequence of the selection of a good predictive submodel based on an estimate of the out-of-sample prediction risk. We use  ``submodel'' here to denote a subset of the full set of covariates.

Traditional analysis of the least squares linear regression estimator restricts attention to a single set of covariates to prove consistency as well as asymptotic normality; see~\cite{white1980heteroskedasticity,white1980using} and~\cite{Buja14}. In this case, it was proved that the least squares estimator is weakly and strongly consistent to the population least squares functional; see~\eqref{eq:LeastSquaresExpected} below. Also, a properly normalized estimator has an asymptotic normal distribution. However, the theoretical understanding and practical usefulness of submodel least squares estimators resulting from a covariate selection procedure requires simultaneous consistency and (asymptotic) normality of all the estimators under consideration. Such simultaneous consistency and normality properties are the major focus of the current article. These are what we call \emph{uniform-in-submodel} results. To be more concrete, suppose $\mathcal{M} = \{M_1, M_2, \ldots, M_L\}$ denotes a collection of submodels, where $M_j$ represents a subset of covariates for $1\le j\le L$. Also, let $\hat{\beta}_{M_j}$ represent the least squares estimator for the linear regression of the response on the covariates in $M_j$. By simultaneous consistency, we mean the existence of target vectors $\{\beta_{M_j}:1\le j\le L\}$ such that
\begin{equation}\label{eq:SimulConsistency}
\sup_{M\in\mathcal{M}}\,\|\hat{\beta}_{M} - \beta_M\| ~=~ o_p(1),\quad\mbox{as}\quad n\to\infty,
\end{equation}
for some norm $\norm{\cdot}$. To claim simultaneous asymptotic normality, we prove the existence of functions $\{\psi_{M_j}(\cdot):\,1\le j\le L\}$ such that
\begin{equation}\label{eq:SimulNormality}
\sup_{M\in\mathcal{M}}\,\norm{\sqrt{n}\left(\hat{\beta}_M - \beta_M\right) - \frac{1}{\sqrt{n}}\sum_{i=1}^n \psi_M(Z_i)} ~=~ o_p(1),\quad\mbox{as}\quad n\to\infty.
\end{equation}
Here $n$ represents the sample size and $Z_i = (X_i, Y_i), 1\le i\le n$ represent the regression data, with detailed notation given in Section~\ref{sec:NotationSetting}. Equation~\eqref{eq:SimulNormality} provides the well-known ``asymptotic uniform linear representation'' in the special case of the least squares linear regression estimator. {This uniform linear representation is very crucial in providing inference after variable selection via simultaneous inference~\citep{Bac16}. If $\widehat{M}$ is a selected model, then one can perform inference on $\beta_{\widehat{M}}$ by estimating the distribution of~$\widehat{\beta}_{\widehat{M}}$. This can be a tricky problem to deal with as shown in the works of~\cite{leeb2005model,Leeb06,leeb2006performance,leeb2008can}.}

Although various model selection criteria like $C_p$, AIC, BIC, and lasso have been recommended for covariate selection in linear regression, results of the type~\eqref{eq:SimulConsistency} and~\eqref{eq:SimulNormality} have not been established in the literature (at least not in the full generality considered here). Our method of attack is quite non-standard. Instead of assuming that the observations are independent and identically distributed, we prove a purely  deterministic inequality to bound the left hand sides of~\eqref{eq:SimulConsistency} and~\eqref{eq:SimulNormality} using maxima of several averages.  We then control these averages under both independence and functional dependence to obtain explicit rates of convergence; cf.~\cite{White2001} where a detailed classical analysis of the least squares regression estimator is provided. The functional dependence structure of data, introduced in \cite{Wu05}, is based on the idea of coupling and covers the setting of many linear and non-linear time series. This dependence concept is very closely related to the $L_p$-approximability concept introduced in~\cite{Potscher97}.

Some noteworthy aspects of our results are as follows.
\begin{enumerate}
  \item We provide a purely deterministic inequality for the least squares linear regression estimator which does not require any stochasticity of the regression data and holds for any sample size $n$. These deterministic results are sharp and by nature more widely applicable than any asymptotic results. Some deterministic inequalities for linear regression appeared in~\cite{kuchibhotla2018valid}.  Although these inequalities led to suboptimal rates,  the structure of those deterministic inequalities were useful for the context in that paper. 
  \item All our results allow misspecification of the linear model. This means that the classical Gauss-Markov linear model need not hold true for any of the submodels under consideration; see Chapter 4 of~\cite{monahan2008primer}. Two important objections (for us) to the classical model are the impositions of fixed design and linearity structure on the data generating distribution. Since our setting allows for misspecification, we call our framework ``model-free.'' We note here that our results do apply to the setting of fixed covariates.
  \item When studied assuming a suitable randomness structure (such as independence or functional dependence), our results are precise concentration inequalities applicable in finite samples and apply to high-dimensional observations. Another interesting facet of our results is that we do not assume the observations are identically distributed.  This is an important generalization {needed to include the case of fixed covariates}. 
  \item For concreteness, we take the set of submodels $\mathcal{M}$ to be the set of all submodels of size bounded by $k$ (for some $1\le k\le p$). Here $p$ represents the total number of available covariates. Under certain regularity conditions, the rates of convergence we obtain in this case for simultaneous consistency~\eqref{eq:SimulConsistency} and normality~\eqref{eq:SimulNormality} with Euclidean norm are $\sqrt{k\log(ep/k)/n}$ and $k\log(ep/k)/\sqrt{n}$, respectively (up to a lower order additive term). Interestingly, the simultaneous consistency rate matches the minimax optimal rate of a well-specified high-dimensional sparse linear regression; see~\cite{Raskutti11}. It should be noted that even though the rates match with the setting of \emph{well-specified high-dimensional linear regression}, we do NOT require a well-specified model in this article. 
  \item In the process of applying our results to functionally dependent observations, we prove a tail bound for zero mean dependent sums, thereby extending the results of \cite{WuWu16}. For independent observations, we use the precise concentration inequality results of~\cite{KuchAbhi17}.
\end{enumerate}

In addition to the important general model selection problem above where the results of the type~\eqref{eq:SimulConsistency} and~\eqref{eq:SimulNormality} are required, our simultanety results can be seen to provide essential inferential validity guarantees for the following setting of growing importance. 
In the vast literature on high-dimensional linear regression, it has become customary to assume an underlying linear model along with a sparsity constraint on the true regression parameter.  But suppose statisticians are not willing to assume sparsity of the parameter, and neither are they willing to assume a linear model. Such unwillingness is not unreasonable in light of the fact that any model is just an approximation, and sparsity is just an assumption of convenience. Now consider the following stylized description of approaches to high-dimensional data as widely practiced in applied statistics and data science: High dimensional data is first explored either in a formal algorithmic way (e.g., using lasso or best subset selection) and/or in an informal exploratory way (e.g., using residual and leverage plots) to select a manageable small set of variables. Subsequently, the reduced data is subjected to linear regression. The combination of variable selection and linear regression is thought of as one procedure, a ``high-dimensional linear regression''.  Even though the procedure uses only a reduced set of variables in the final regression, {it} uses all the variables in the preceding selection phase. Suppose $\hat{M}\in\mathcal{M}$ is the final selected submodel (from some collection of models $\mathcal{M}$) and $\hat{\beta}_{\hat{M}}$ is the least squares linear regression estimator thus obtained. The estimator $\hat{\beta}_{\hat{M}}$ is known as the post-regularization estimator in the high-dimensional statistics literature if $\hat{M}$ is obtained from some regularized least squares procedure. An important question now is ``what does $\hat{\beta}_{\hat{M}}$ estimate (consistently)?''. A simultaneous result answers this question through the \emph{trivial} bound
\[
\|\hat{\beta}_{\hat{M}} - \beta_{\hat{M}}\| ~\le~ \sup_{M\in\mathcal{M}}\,\|\hat{\beta}_M - \beta_M\| = o_p(1). 
\]
Therefore, $\hat{\beta}_{\hat{M}}$ is estimating the quantity $\beta_{\hat{M}}$ which is random through $\hat{M}$. If the model selection procedure is such that $\hat{M}$ does not stabilize as $n\to\infty$, then $\hat{\beta}_{\hat{M}}$ is only consistent for the random quantity $\beta_{\hat{M}}$ and may not be consistent for any non-random quantity. By comparison if $\mathbb{P}(\hat{M} = M_0) \to 1$ as $n\to\infty$ for some submodel $M_0$, then with probability converging to one $\beta_{\hat{M}} = \beta_{M_0}$ and hence $\hat{\beta}_{\hat{M}}$ is consistent for the non-random quantity $\beta_{M_0}$.
\subsection{Literature Review}
Results of the simultaneous type described in~\eqref{eq:SimulConsistency} and~\eqref{eq:SimulNormality} are not readily available in the literature. Some works that are closely related to ours are~\cite{Belloni13},~\cite{bachoc2018post} and~\cite{chakrabortty18}. Although some of these works consider a simultaneous problem, their results are only restricted to certain special cases (e.g., independent observations and/or fixed design) of our framework. \cite{Belloni13} prove the rate of convergence of the least squares linear regression estimator obtained after covariate selection using lasso. \cite{bachoc2018post} prove the rate of convergence of 
\[
\sup_{M\in\mathcal{M}}\,\norm{\hat{\beta}_M - \beta_M}_{\infty}
\]
under the restricted isometry property (RIP). (Here $\norm{v}_{\infty}$ for a vector $v$ denotes the maximum absolute entry in the vector.) Also, they only consider fixed covariates. We do not assume RIP because it is not a practical assumption, and also we prove the simultaneous convergence guarantee with the Euclidean norm rather than $\norm{\cdot}_{\infty}$. It should also be mentioned that~\cite{bachoc2018post} appeared after the initial version of the current work~\cite{2018arXiv180205801K}. \cite{chakrabortty18} independently prove results very similar to ours in the case of independent observations with sub-Gaussian tails. They consider a more general collection of submodels $\mathcal{M}$ than the set of $k$-sparse submodels; see section 5 of~\cite{chakrabortty18} for more details. Because our results are deterministic in nature, they do apply for a general collection of submodels, but for concreteness we fix the choice of the collection. {Under the assumptions of~\citet[Section 5]{chakrabortty18}, their results match ours exactly. We note, however, that their results are only proved for independent and identically distributed observations, which is why they do not apply to the case of fixed covariates. Further, our results, including the case of independent observations, are proved for a large class of tail assumptions that subsume their assumptions.

Finally, we mention two recent works that discuss uniform-in-submodel type results. \cite{Rin16} in their Theorem 1, as well as Remark 4 that follows, discuss uniform-in-submodel consistency for independent and identically distributed observations that are bounded. Their rates, however, are suboptimal; for instance, their Theorem 1 only proves a rate $k\sqrt{\log(k)/n}$ while our results imply the optimal rate of $\sqrt{k/n}$. \citet[Chapter 2]{giessing2018high}, following the initial version of our work, proves uniform-in-submodel consistency as well as linear representation results for quantile regression when the observations are independent. The tail assumptions on the observations there are weaker than ours but this is expected, at least for the response, because the loss is Lipschitz in the response.}  

\subsection{Organization}
The remainder of our paper is organized as follows. In Section~\ref{sec:NotationSetting}, we introduce our notation and general framework. In Section~\ref{sec:Deterministic}, we derive various deterministic inequalities for linear regression that form the core of the paper. The application of these results to the case of independent observations is considered in Section~\ref{sec:Independent}. The application of the deterministic inequalities to the case of (functionally) dependent observations is considered in Section~\ref{sec:Dependent}. 
A discussion of our results along with their implications for post-selection inference is given in Section~\ref{sec:Discussion}. Some auxiliary probability results for sums of independent and functionally dependent random variables are given in Appendix~\ref{AppSec:Independence} and Appendix~\ref{AppSec:Dependence}, respectively. 

\section{Notation}\label{sec:NotationSetting}
Suppose $(X_1, Y_1), \ldots, (X_n, Y_n)$ are $n$ random vectors in $\mathbb{R}^p\times\mathbb{R}$. Throughout the paper, we implicitly think of $p$ as a function of $n$ and so the sequence of random vectors should be thought of as a triangular array. The term ``submodel'' is used to specify the subset of covariates used in the regression and does not refer to any probability model. We do \underline{\emph{not}} assume a linear model (in any sense) to be true anywhere for any choice of covariates in any section of the paper. In this sense all our results are applicable in the case of misspecified linear regression models.

For any vector $v\in\mathbb{R}^q$ for $q\ge 1$ and $1\le j\le q$, let $v(j)$ denote the $j$-th coordinate of $v$. For any non-empty submodel $M$ given by a subset of $\{1,2,\ldots,q\}$, let $v(M)$ denote a sub-vector of $v$ with indices in $M$. For instance, if $M = \{2, 4\}$ and $q \ge 4$, then $v(M) = (v(2), v(4))$. The notation $|M|$ is used to denote the cardinality of $M$.  For any non-empty submodel $M\subseteq\{1,2,\ldots,q\}$ and any symmetric matrix $A\in\mathbb{R}^{q\times q}$, let $A(M)$ denote the sub-matrix of $A$ with indices in $M\times M$. For $1\le j, k\le q$, let $A(j,k)$ denote the value at the $j$-th row and the $k$-th column of $A$. Define the $r$-norm of a vector $v\in\mathbb{R}^q$ for $1\le r\le \infty$ as
\[
\norm{v}_r^r := \sum_{j=1}^q |v(j)|^r,\quad\mbox{for}\quad 1\le r < \infty,\quad\mbox{and}\quad \norm{v}_{\infty} := \max_{1\le j\le q}|v(j)|.
\]
Let $\norm{v}_0$ denote the number of non-zero entries in $v$ (note this is not a norm). For any square matrix $A$, let $\lambda_{\min}(A)$ denote the minimum eigenvalue of $A$. Also, let the elementwise maximum and the operator norm be defined, respectively, as
\[
\vertiii{A}_{\infty} := \max_{1\le j, k\le q}|A(j,k)|,\quad\mbox{and}\quad \norm{A}_{op} := \sup_{\norm{\delta}_2 \le 1}\norm{A\delta}_2.
\]
The following simple inequalities are useful. For any matrix $A\in\mathbb{R}^{q\times q}$ and $v\in\mathbb{R}^q$, 
\begin{equation}\label{eq:MatrixVectorIneq}
\norm{v}_1 \le \norm{v}_0^{1/2}\norm{v}_2,\quad \norm{Av}_{\infty} \le \vertiii{A}_{\infty}\norm{v}_1,\quad\mbox{and}\quad |v^{\top}Av| \le \vertiii{A}_{\infty}\norm{v}_1^2.
\end{equation}
For any $1\le k\le p$, define the set of $k$-sparse submodels
\begin{equation}\label{eq:kSparse}
\mathcal{M}(k) := \{M:\, M\subseteq\{1,2,\ldots, p\},\,\, 1\le |M| \le k\},
\end{equation}
so that $\mathcal{M}(p)$ is the power set of $\{1,2,\ldots,p\}$ with the deletion of the empty set. Thus the set $\mathcal{M}(k)$ denotes the set of all non-empty submodels of size bounded by $k$. The most important aspect of our results is the ``uniform-in-submodel'' feature. These results are proved uniform over $M\in\mathcal{M}(k)$ for some $k$ that is allowed to diverge with $n$.

When fitting a linear regression, it is common to include an intercept term. To avoid extra notation, we assume that all covariates under consideration are included in the vectors $X_i$. So, take the first coordinate of all $X_i$'s to be 1, that is, $X_i(1) = 1$ for all $1\le i\le n,$ if an intercept is required. For any $M\subseteq\{1,2,\ldots,p\}$, define the ordinary least squares empirical risk (or objective) function as
\begin{equation}\label{eq:EmpObj}
\hat{R}_n(\theta; M) := \frac{1}{n}\sum_{i=1}^n \left\{Y_i - X_i^{\top}(M)\theta\right\}^2,\quad\mbox{for}\quad \theta\in\mathbb{R}^{|M|}.
\end{equation}
Expanding the square function it is clear that
\begin{equation}\label{eq:EmpObjExpanded}
\hat{R}_n(\theta; M) = \frac{1}{n}\sum_{i=1}^n Y_i^2 - \frac{2}{n}\sum_{i=1}^n Y_iX_i^{\top}(M)\theta + \theta^{\top}\left(\frac{1}{n}\sum_{i=1}^n X_i(M)X_i^{\top}(M)\right)\theta.
\end{equation}
Only the second and the third term depend on $\theta$. Because the quantities in these terms play a significant role in our analysis, define 
\begin{equation}\label{eq:MatrixVectorEstimators}
\begin{split}
\hat{\Sigma}_n := \frac{1}{n}\sum_{i=1}^n X_iX_i^{\top}\in\mathbb{R}^{p\times p},\quad&\mbox{and}\quad \hat{\Gamma}_n := \frac{1}{n}\sum_{i=1}^n X_iY_i\in\mathbb{R}^{p}.
\end{split}
\end{equation}
The least squares linear regression estimator $\hat{\beta}_{n,M}$ is defined as
\begin{equation}\label{eq:LeastSquaresEstimator}
\hat{\beta}_{n,M} ~:=~ \argmin_{\theta\in\mathbb{R}^{|M|}}\,\hat{R}_n(\theta; M) ~=~ \argmin_{\theta\in\mathbb{R}^{|M|}}\,\{\theta^{\top}\hat{\Sigma}_n(M)\theta - 2\theta^{\top}\hat{\Gamma}_n(M)\}.
\end{equation}
The notation $\argmin_{\theta}\,f(\theta)$ denotes the minimizer of $f(\theta)$. Based on the quadratic expansion \eqref{eq:EmpObjExpanded} of the empirical objective $\hat{R}_n(\theta; M)$, the estimator $\hat{\beta}_{n,M}$ is given by the closed form expression
\begin{equation}\label{eq:LeastSquaresEstimatorClosed}
\hat{\beta}_{n,M} = [\hat{\Sigma}_n(M)]^{-1}\hat{\Gamma}_n(M),
\end{equation}
assuming non-singularity of $\hat{\Sigma}_n(M)$. Note that $[\hat{\Sigma}_n(M)]^{-1}$ is \emph{not} equal to $\hat{\Sigma}_n^{-1}(M)$. The matrix $\hat{\Sigma}_n(M)$ being the average of $n$ rank one matrices in $\mathbb{R}^{|M|\times |M|}$, its rank is at most $\min\{|M|, n\}$. This implies that the least squares estimator $\hat{\beta}_{n,M}$ is not uniquely defined unless $|M| \le n$.

It is clear from \eqref{eq:LeastSquaresEstimatorClosed} that $\hat{\beta}_{n,M}$ is a smooth (non-linear) function of two averages $\hat{\Sigma}_n(M)$ and $\hat{\Gamma}_n(M)$. Assuming for a moment that the random vectors $(X_i, Y_i)$ are independent and identically distributed (iid) with finite fourth moments, it follows that $\hat{\Sigma}_n(M)$ and $\hat{\Gamma}_n(M)$ converge in probability to their expectations. The iid assumption here can be relaxed to weak dependence and non-identically distributed random vectors; see~\cite{White2001} for more details. 

Getting back to the general context, define the ``expected'' matrix and vector as
\begin{equation}\label{eq:MatrixVectorExpected}
\begin{split}
{\Sigma}_n := \frac{1}{n}\sum_{i=1}^n \mathbb{E}\left[X_iX_i^{\top}\right]\in\mathbb{R}^{p\times p},\quad&\mbox{and}\quad {\Gamma}_n := \frac{1}{n}\sum_{i=1}^n \mathbb{E}\left[X_iY_i\right]\in\mathbb{R}^{p}.
\end{split}
\end{equation}
Note that we write $\Sigma_n$ or $\Gamma_n$ (indexing by the sample size $n$) for two reasons. Firstly, we do not assume the random vectors are identically distributed, and hence the expected matrix changes with $n$ even if the dimension is fixed. Secondly, the dimension in our setting is allowed to change with $n$, and hence, even if the observations are identically distributed, the expectation matrix changes with the sample size.

To define a target vector that is being consistently estimated by $\hat{\beta}_{n,M}$, consider the following simple calculation in a simpler setting where $|M|$ does not change with $n$. As noted above $\hat{\beta}_{n,M} = [\hat{\Sigma}_n(M)]^{-1}\hat{\Gamma}_n(M)$, and if 
\begin{equation}\label{eq:ConsistencyAssumption}
(\hat{\Sigma}_n - \Sigma_n,\,\hat{\Gamma}_n - \Gamma_n)~\overset{P}{\to}~ 0\quad\mbox{as}\quad n\to\infty,
\end{equation}
then by a Slutsky type argument, it follows that
\begin{equation}\label{eq:DefiningTarget}
\hat{\beta}_{n,M} - \beta_{n,M} ~\overset{P}{\to}~ 0\quad\mbox{as}\quad n\to\infty,
\end{equation}
where
\begin{equation}\label{eq:LeastSquaresExpected}
\begin{split}
\beta_{n,M} ~&:=~ [\Sigma_n(M)]^{-1}\Gamma_n(M) ~=~ \argmin_{\theta\in\mathbb{R}^{|M|}}\, \{\theta^{\top}\Sigma_n(M)\theta - 2\theta^{\top}\Gamma_n(M)\}.
\end{split}
\end{equation} 
The convergence statement~\eqref{eq:DefiningTarget} only concerns a single submodel $M$ and is not uniform over $M$. By uniform-in-submodel $\norm{\cdot}_2$-norm consistency of $\hat{\beta}_{n,M}$ to $\beta_{n,M}$ for $M\in\mathcal{M}(k)$, we mean that
\[
\sup_{M\in\mathcal{M}(k)}\,\norm{\hat{\beta}_{n,M} - \beta_{n,M}}_2 = o_p(1)\quad\mbox{as}\quad n\to\infty.
\]

As shown above, convergence of $\hat{\beta}_{n,M}$ to $\beta_{n,M}$ only requires convergence of $\hat{\Sigma}_n(M)$ to $\Sigma_n(M)$ and $\hat{\Gamma}_n(M)$ to $\Gamma_n(M)$. It is not required that these matrices and vectors are averages of random matrices and random vectors.

In the following section, in proving deterministic inequalities, we generalize the linear regression estimator by the function $\beta_M:\mathbb{R}^{p\times p} \times \mathbb{R}^{p} \to \mathbb{R}^{|M|}$ as
\begin{equation}\label{eq:LinearRegressionMap}
\beta_M\left(\Sigma, \Gamma\right) = [\Sigma(M)]^{-1}\Gamma(M),
\end{equation}
assuming the existence of the inverse of $\Sigma(M)$. We call this $\beta_M(\cdot, \cdot)$ the \emph{linear regression map}. It is evident that
\[
\hat{\beta}_{n,M} ~=~ \beta_M(\hat{\Sigma}_n, \hat{\Gamma}_n)\quad\mbox{and}\quad \beta_{n,M} ~=~ \beta_M(\Sigma_n, \Gamma_n). 
\] 

There are many potential applications that require replacing the sample average matrices in the linear regression estimator by a suitable non-average version, e.g., shrinkage or robust estimators. Three of these applications are listed in Section~\ref{subsec:ApplicationsMap}. To distinguish the estimator $\hat{\beta}_{n,M}$ with sample averages from the {linear regression map}, we call $\hat{\beta}_{n,M}$ as the OLS estimator.

In the next section, we shall prove a bound of the type
\begin{equation}\label{eq:BoundSimulConsis}
\norm{\beta_M\left(\Sigma_1, \Gamma_1\right) - \beta_M\left(\Sigma_2, \Gamma_2\right)}_2 ~\le~ F_M\left(\Sigma_1 - \Sigma_2,\,\Gamma_1 - \Gamma_2\right)\quad{\mbox{for all}\quad M\in\mathcal{M}(k)}
\end{equation}
and for some function $F_M(\cdot, \cdot)$. Taking $(\Sigma_1, \Gamma_1) = (\hat{\Sigma}_n, \hat{\Gamma}_n)$ and $(\Sigma_2, \Gamma_2) = (\Sigma_n, \Gamma_n)$, inequality~\eqref{eq:BoundSimulConsis} is useful for the purpose of proving~\eqref{eq:SimulConsistency}. In regard to~\eqref{eq:BoundSimulConsis}, thinking of $\beta_M$ as a function of $(\Sigma, \Gamma)$, our results are essentially about studying Lipschitz continuity properties and understanding what kind of norms are best suited for this purpose. Using the smoothness of the linear regression map, we also obtain a bound on
\[
\sup_{M\in\mathcal{M}(k)}\,\norm{\beta_M(\Sigma_1, \Gamma_1) - \beta_M(\Sigma_2, \Gamma_2) - \nabla\beta_M(\Sigma_2, \Gamma_2)(\Sigma_1 - \Sigma_2, \Gamma_1 - \Gamma_2)}_2,
\]
where $\nabla\beta_M(\cdot, \cdot)$ represents the gradient of the linear regression map. The following error norms will be very useful for these results:
\begin{equation}\label{eq:ErrorNorms}
\begin{split}
\RIP(k, \Sigma_1 - \Sigma_2) &:= \sup_{M\in\mathcal{M}(k)}\,\norm{\Sigma_1(M) - \Sigma_2(M)}_{op},\\
\mathcal{D}\left(k, \Gamma_1 - \Gamma_2\right)  &= \sup_{M\in\mathcal{M}(k)}\,\norm{\Gamma_1(M) - \Gamma_2(M)}_2.
\end{split}
\end{equation}
The quantity $\RIP$ is a norm for any $k\ge 2$ and is not a norm for $k = 1$. This error norm is very closely related to the restricted isometry property used in the compressed sensing and high-dimensional linear regression literature where $\Sigma_2$ is the identity matrix. Also, define the $k$-sparse minimum singular value of a matrix $A\in\mathbb{R}^{p\times p}$ as
\begin{equation}\label{eq:MinimalkSparse}
\Lambda(k; A) = \inf_{\theta\in\mathbb{R}^p, \norm{\theta}_0 \le k}\,\frac{\norm{A\theta}_2}{\norm{\theta}_2}.
\end{equation}
Even though all the results in the next section are written in terms of the linear regression map \eqref{eq:LinearRegressionMap}, our main focus will still be the matrices and vectors defined in~\eqref{eq:MatrixVectorEstimators} and~\eqref{eq:MatrixVectorExpected}.
\section{Deterministic Results for Linear Regression}\label{sec:Deterministic}
\subsection{Can we expect deterministic inequalities?}\label{subsec:CanWeExpect}
Classical asymptotic theory for linear regression or for that matter any estimation problem usually starts with an assumption that the observations are independent or otherwise follow a specific stochastic dependence. What we are aiming for is a purely deterministic inequality that does not even assume randomness of the observations. 

To see whether we can at all expect a deterministic inequality, let us consider a simple example with only one submodel $M = \{1\}$, that is, a simple regression through the origin based on one regressor. For this case, let us write
\[
\hat{\sigma}_n^2 := \hat{\Sigma}_n(M),\quad\hat{\gamma}_n := \hat{\Gamma}_n(M),\quad\sigma_n^2 := \Sigma_n(M),\,\quad\mbox{and}\,\quad \gamma_n := \Gamma_n(M).
\]
Note that these are all scalar quantities. Now the regression estimator and targets become
\[
\hat{\beta}_{n,M} = \frac{\hat{\gamma}_n}{\hat{\sigma}_n^2}\quad\mbox{and}\quad \beta_{n,M} = \frac{\gamma_n}{\sigma_n^2}.
\]
Observe that
\begin{align*}
\left|\hat{\beta}_{n,M} - \beta_{n,M}\right| &= \left|\frac{\hat{\gamma}_n}{\hat{\sigma}_n^2} - \frac{\gamma_n}{\sigma_n^2}\right|\\ &\le \left|\frac{1}{\hat{\sigma}_n^2} - \frac{1}{\sigma_n^2}\right|\hat{\gamma}_n + \frac{1}{\sigma_n^2}\left|\hat{\gamma}_n - \gamma_n\right|\\
&\le \sigma_n^{-2}\left|\hat{\sigma}_n^2 - \sigma_n^2\right|\times|\hat{\beta}_{n,M}| + \sigma_n^{-2}\left|\hat{\gamma}_n - \gamma_n\right|\\
&\le \sigma_n^{-2}\left|\hat{\sigma}_n^2 - \sigma_n^2\right|\times|\hat{\beta}_{n,M} - \beta_{n,M}| + \sigma_n^{-2}\left|\hat{\sigma}_n^2 - \sigma_n^2\right|\times|\beta_{n,M}| + \sigma_n^{-2}\left|\hat{\gamma}_n - \gamma_n\right|.
\end{align*}
Solving this inequality for $|\hat{\beta}_{n,M} - \beta_{n,M}|$, we get
\[
\left|\hat{\beta}_{n,M} - \beta_{n,M}\right| \le \frac{\left|\hat{\sigma}_n^2 - \sigma_n^2\right|\times|\beta_{n,M}| + \left|\hat{\gamma}_n - \gamma_n\right|}{\sigma_n^2 - \left|\hat{\sigma}_n^2 - \sigma_n^2\right|}.
\]
This is a deterministic inequality that does not require any probabilistic structure on the data, and more importantly, the right hand side tends to zero if $\hat{\sigma}_n^2 - \sigma_n^2 = o(\sigma_n^2)$ and $\hat{\gamma}_n - \gamma_n {= o(\sigma_n^2)}$. Because this bound is a deterministic inequality, taking a supremum over a collection of submodels does not invalidate the inequality. This is \emph{not} the case if we only have an asymptotic result. All our deterministic inequalities to be stated/proved in the forthcoming sections are variations of the calculation above. One might suspect that the closed form expression of the linear regression map made a deterministic inequality possible, but as shown in~\cite{kuchibhotla2018Deter} most ``smooth'' $M$-estimators satisfy this type of result.
\subsection{Main Results}\label{subsec:MainResultsDeterministic}
All our results in this section depend on the error norms $\RIP(k, \Sigma_1 - \Sigma_2)$ and $\mathcal{D}(k, \Gamma_1 - \Gamma_2)$ in~\eqref{eq:ErrorNorms}. These are, respectively, the maximal $k$-sparse eigenvalue of $\Sigma_1 - \Sigma_2$ and the maximal $k$-sparse Euclidean norm of $\Gamma_1 - \Gamma_2$. At first glance, it may not be clear how these quantities behave. We first present a simple inequality for $\RIP$ and $\mathcal{D}$ in terms of $\vertiii{\cdot}_{\infty}$ and $\norm{\cdot}_{\infty}$.
\begin{prop}\label{prop:UniformNonSing}
For any $k\ge 1$,
\begin{align*}
\sup_{M\in\mathcal{M}(k)}\norm{\Sigma_1(M) - \Sigma_2(M)}_{op} &\le k\vertiii{\Sigma_1 - \Sigma_2}_{\infty},\\
\sup_{M\in\mathcal{M}(k)}\norm{\Gamma_1(M) - \Gamma_2(M)}_2 &\le k^{1/2}\norm{\Gamma_1 - \Gamma_2}_{\infty}.
\end{align*}
\end{prop}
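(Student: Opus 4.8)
The plan is to prove each inequality separately, using the elementary matrix/vector inequalities in~\eqref{eq:MatrixVectorIneq} together with the definitions of the operator norm and $r$-norms. Write $\Delta := \Sigma_1 - \Sigma_2$ and $\gamma := \Gamma_1 - \Gamma_2$ for brevity.

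For the second (easier) inequality, I would fix an arbitrary $M \in \mathcal{M}(k)$ and observe that $\gamma(M)$ is a sub-vector of $\gamma$ with $|M| \le k$ entries. Hence
\[
\norm{\gamma(M)}_2^2 = \sum_{j \in M} |\gamma(j)|^2 \le |M| \max_{1 \le j \le p} |\gamma(j)|^2 \le k \norm{\gamma}_\infty^2,
\]
so $\norm{\gamma(M)}_2 \le k^{1/2}\norm{\gamma}_\infty$. Since the right-hand side does not depend on $M$, taking the supremum over $M \in \mathcal{M}(k)$ gives the claim.

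For the first inequality, I would again fix $M \in \mathcal{M}(k)$ and bound $\norm{\Delta(M)}_{op}$. By definition of the operator norm, pick a unit vector $\delta \in \mathbb{R}^{|M|}$ (so $\norm{\delta}_2 \le 1$) achieving (or approaching) the supremum, and embed it as a $p$-vector supported on $M$. Then $\delta^\top \Delta(M)\delta = \delta^\top \Delta \delta$ for the embedded vector, and since $\Delta(M)$ is symmetric, $\norm{\Delta(M)}_{op} = \sup_{\norm{\delta}_2 \le 1} |\delta^\top \Delta(M)\delta|$. Applying the third inequality in~\eqref{eq:MatrixVectorIneq}, namely $|\delta^\top \Delta \delta| \le \vertiii{\Delta}_\infty \norm{\delta}_1^2$, and then using the first inequality there, $\norm{\delta}_1 \le \norm{\delta}_0^{1/2}\norm{\delta}_2 \le k^{1/2}\norm{\delta}_2 \le k^{1/2}$ (the support of the embedded $\delta$ lies in $M$, so $\norm{\delta}_0 \le |M| \le k$), I obtain $|\delta^\top \Delta \delta| \le k \vertiii{\Delta}_\infty$. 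Taking the supremum over $\delta$ and then over $M$ yields $\RIP(k, \Delta) \le k \vertiii{\Delta}_\infty$.

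The only genuine subtlety, rather than a true obstacle, is the use of the variational characterization $\norm{\Delta(M)}_{op} = \sup_{\norm{\delta}_2 \le 1}|\delta^\top \Delta(M)\delta|$ for the operator norm of a symmetric matrix, together with the bookkeeping that embedding a sub-vector indexed by $M$ into $\mathbb{R}^p$ produces a vector with $\norm{\cdot}_0 \le |M| \le k$ and preserves both the quadratic form and the $\norm{\cdot}_2$-norm. Everything else is a direct application of~\eqref{eq:MatrixVectorIneq} and monotonicity of the supremum. I expect the proof to be short.
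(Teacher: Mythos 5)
Your proposal is correct and follows essentially the same route as the paper: both reduce $\sup_{M\in\mathcal{M}(k)}\norm{\Sigma_1(M)-\Sigma_2(M)}_{op}$ to a supremum of the quadratic form $|\theta^{\top}(\Sigma_1-\Sigma_2)\theta|$ over $k$-sparse unit vectors (you merely make the symmetric-matrix variational characterization and the embedding step explicit, which the paper leaves implicit) and then apply the inequalities in~\eqref{eq:MatrixVectorIneq}. Your direct coordinate count for the $\Gamma$-part is a harmless elementary variant of the paper's ``similar proof.''
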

\begin{proof}
See Appendix~\ref{appsec:propUniformNonSing} for a proof.
\end{proof}
In many cases, it is much easier to control the maximum elementwise norm rather than the $\RIP$ error norm. However, the factor $k$ on the right hand side often leads to sub-optimal dependence in the dimension. For the special cases of independent and dependent random vectors (to be discussed in Sections~\ref{sec:Independent} and~\ref{sec:Dependent}), we directly control $\RIP$ and $\mathcal{D}$.

The sequence of results to follow are related to uniform consistency in $\norm{\cdot}_2$- and $\norm{\cdot}_1$-norms. To state these results, we require the following quantities representing the strength of regression (or linear association). For $r, k \ge 1$
\begin{equation}\label{eq:StrengthS}
S_{r,k}(\Sigma, \Gamma) := \sup_{M\in\mathcal{M}(k)}\,\norm{\beta_M(\Sigma, \Gamma)}_r = \sup_{M\in\mathcal{M}(k)}\norm{[\Sigma(M)]^{-1}\Gamma(M)}_r.
\end{equation}
For the following theorem, recall the $k$-sparse minimum singular value $\Lambda(\cdot; \cdot)$ defined in~\eqref{eq:MinimalkSparse} and the error metrics defined in~\eqref{eq:ErrorNorms}.
\begin{thm}(Uniform $L_2$-consistency)\label{thm:L2UniformConsis}
Let $k \ge 1$ be any integer such that 
\begin{equation}\label{eq:RIPConverge}
\RIP(k, \Sigma_1 - \Sigma_2) \le \Lambda(k; \Sigma_2).
\end{equation} 
Then simultaneously for all $M\in\mathcal{M}(k)$,
\begin{equation}\label{eq:MarginalL2}
\begin{split}
\norm{\beta_M(\Sigma_1, \Gamma_1) \right.&-\left. \beta_{M}(\Sigma_2, \Gamma_2)}_2\le \frac{\mathcal{D}(k, \Gamma_1 - \Gamma_2) + \RIP(k, \Sigma_1 - \Sigma_2)\norm{\beta_{M}(\Sigma_2, \Gamma_2)}_2}{\Lambda(k; \Sigma_2) - \RIP(k, \Sigma_1 - \Sigma_2)}.
\end{split}
\end{equation}
\end{thm}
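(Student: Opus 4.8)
The plan is to fix an arbitrary $M\in\mathcal{M}(k)$, write the two estimators through their defining (normal) equations, and bound their difference by a single energy-type inequality. Uniformity over $M$ then comes for free, because the three governing quantities $\RIP(k,\Sigma_1-\Sigma_2)$, $\mathcal{D}(k,\Gamma_1-\Gamma_2)$ and $\Lambda(k;\Sigma_2)$ are already defined as suprema/infima over the whole class $\mathcal{M}(k)$, so the bound obtained for a fixed $M$ has a right-hand side that does not depend on $M$ (apart from the term $\norm{\beta_M(\Sigma_2,\Gamma_2)}_2$, which is exactly what appears in \eqref{eq:MarginalL2}).

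First I would check that $\beta_M(\Sigma_1,\Gamma_1)$ is well defined, i.e.\ that $\Sigma_1(M)$ is nonsingular. Writing $\Sigma_1(M)=\Sigma_2(M)+(\Sigma_1(M)-\Sigma_2(M))$ and testing against any $\theta\in\mathbb{R}^{|M|}$, regarded as a $k$-sparse vector in $\mathbb{R}^p$ supported on $M$, the definition of $\Lambda(k;\Sigma_2)$ together with the bound $|\theta^{\top}(\Sigma_1(M)-\Sigma_2(M))\theta|\le \RIP(k,\Sigma_1-\Sigma_2)\norm{\theta}_2^2$ (which follows from the variational characterization of $\RIP$ used in the proof of Proposition~\ref{prop:UniformNonSing}) gives $\theta^{\top}\Sigma_1(M)\theta\ge(\Lambda(k;\Sigma_2)-\RIP(k,\Sigma_1-\Sigma_2))\norm{\theta}_2^2$. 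Hence $\Sigma_1(M)$ is positive definite whenever the inequality \eqref{eq:RIPConverge} is strict; when it is an equality the denominator in \eqref{eq:MarginalL2} vanishes and the asserted bound is trivially satisfied.

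The core step is the error equation. Set $\beta:=\beta_M(\Sigma_2,\Gamma_2)$, $\hat\beta:=\beta_M(\Sigma_1,\Gamma_1)$, $\delta:=\hat\beta-\beta$, and abbreviate $\Delta_\Sigma:=\Sigma_1(M)-\Sigma_2(M)$ and $\Delta_\Gamma:=\Gamma_1(M)-\Gamma_2(M)$. Subtracting the normal equations $\Sigma_1(M)\hat\beta=\Gamma_1(M)$ and $\Sigma_2(M)\beta=\Gamma_2(M)$ and substituting $\hat\beta=\beta+\delta$ yields the identity $\Sigma_1(M)\delta=\Delta_\Gamma-\Delta_\Sigma\beta$. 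The key algebraic choice is to isolate $\Sigma_1(M)\delta$, rather than $\Sigma_2(M)\delta$, on the left: this forces the perturbation $\Delta_\Sigma$ to act on the reference vector $\beta$, which is precisely the quantity $\norm{\beta_M(\Sigma_2,\Gamma_2)}_2$ appearing in the statement. I would then take the inner product with $\delta$ and split $\delta^{\top}\Sigma_1(M)\delta=\delta^{\top}\Sigma_2(M)\delta+\delta^{\top}\Delta_\Sigma\delta$.

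To finish, I would bound the two sides. Since $\delta$, viewed as a $k$-sparse vector supported on $M$, satisfies $\norm{\delta}_0\le k$, the definitions give $\delta^{\top}\Sigma_2(M)\delta\ge\Lambda(k;\Sigma_2)\norm{\delta}_2^2$ and $|\delta^{\top}\Delta_\Sigma\delta|\le\RIP(k,\Sigma_1-\Sigma_2)\norm{\delta}_2^2$, so the left-hand side is at least $(\Lambda(k;\Sigma_2)-\RIP(k,\Sigma_1-\Sigma_2))\norm{\delta}_2^2$. For the right-hand side, Cauchy--Schwarz gives $|\delta^{\top}\Delta_\Gamma|\le\norm{\delta}_2\,\mathcal{D}(k,\Gamma_1-\Gamma_2)$, while the cross term is controlled by the operator-norm bound $|\delta^{\top}\Delta_\Sigma\beta|\le\norm{\delta}_2\norm{\Delta_\Sigma}_{op}\norm{\beta}_2\le\norm{\delta}_2\,\RIP(k,\Sigma_1-\Sigma_2)\norm{\beta}_2$, using $\norm{\Delta_\Sigma}_{op}\le\RIP(k,\Sigma_1-\Sigma_2)$. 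Combining the two bounds and dividing through by $\norm{\delta}_2$ (the case $\delta=0$ being trivial) produces \eqref{eq:MarginalL2}. I expect the only genuine subtlety to be the treatment of the cross term $\delta^{\top}\Delta_\Sigma\beta$: because $\delta$ and $\beta$ are distinct vectors it must be bounded through the operator norm rather than through a quadratic form, and one must verify that this operator norm is indeed dominated by $\RIP$; everything else is routine rearrangement, and the passage from a fixed $M$ to the uniform statement over $\mathcal{M}(k)$ requires no additional argument.
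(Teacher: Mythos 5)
Your proof is correct, and it takes a genuinely different route from the paper's. The paper splits the difference as $\beta_M(\Sigma_1,\Gamma_1)-\beta_M(\Sigma_2,\Gamma_2)=\left(\left[\Sigma_1(M)\right]^{-1}-\left[\Sigma_2(M)\right]^{-1}\right)\Gamma_1(M)+\left[\Sigma_2(M)\right]^{-1}\left(\Gamma_1(M)-\Gamma_2(M)\right)$, bounds the two pieces by inverse-perturbation and operator-norm arguments to reach an intermediate bound with denominator $\Lambda(k;\Sigma_2)$ and the norm $\norm{\beta_M(\Sigma_1,\Gamma_1)}_2$ of the \emph{perturbed} coefficient, and only then trades that norm for $\norm{\beta_M(\Sigma_2,\Gamma_2)}_2$ via the triangle inequality and a rearrangement; that self-bounding step is where the denominator $\Lambda(k;\Sigma_2)-\RIP(k,\Sigma_1-\Sigma_2)$ arises. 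You instead subtract the two normal equations to get the error identity $\Sigma_1(M)\delta=\Delta_\Gamma-\Delta_\Sigma\beta$ (in your notation) and test it against $\delta$, so the denominator emerges directly from the sparse-eigenvalue lower bound $\delta^{\top}\Sigma_1(M)\delta\ge\left(\Lambda(k;\Sigma_2)-\RIP(k,\Sigma_1-\Sigma_2)\right)\norm{\delta}_2^2$, with no rearrangement needed. Your energy argument (the ``basic inequality'' familiar from lasso-type analyses) is more elementary: it never touches matrix inverses, and it delivers the invertibility of $\Sigma_1(M)$ --- which the paper's proof tacitly assumes when it writes $\left[\Sigma_1(M)\right]^{-1}$ --- as a free byproduct of the same eigenvalue bound. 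What the paper's route buys is the intermediate inequality in terms of $\norm{\beta_M(\Sigma_1,\Gamma_1)}_2$ and a template that recurs later: your error identity is essentially Equation \eqref{eq:MainEquation} in the proof of Theorem \ref{thm:UniformLinearRep}, which the paper exploits there by multiplying with $\left[\Sigma_2(M)\right]^{-1}$ and taking norms rather than by testing against $\delta$. Both proofs handle the boundary case $\RIP(k,\Sigma_1-\Sigma_2)=\Lambda(k;\Sigma_2)$ in the same (trivial) way, and in both the uniformity over $\mathcal{M}(k)$ is automatic because the governing quantities are already defined as extrema over that class.
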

\begin{proof}
Recall from the linear regression map \eqref{eq:LinearRegressionMap} that
\[
{\beta}_M(\Sigma_1, \Gamma_1) = \left[\Sigma_1(M)\right]^{-1}\Gamma_1(M)\quad\mbox{and}\quad \beta_{M}(\Sigma_2, \Gamma_2) = \left[\Sigma_2(M)\right]^{-1}\Gamma_2(M).
\]
Fix $M\in\mathcal{M}(k)$. Then
\begin{align*}
\norm{\beta_M(\Sigma_1, \Gamma_1) - \beta_{M}(\Sigma_2, \Gamma_2)}_2 &= \norm{\left[\Sigma_1(M)\right]^{-1}\Gamma_1(M) - \left[\Sigma_2(M)\right]^{-1}\Gamma_2(M)}_2\\
&\le \norm{\left(\left[\Sigma_1(M)\right]^{-1} - \left[\Sigma_2(M)\right]^{-1}\right)\Gamma_1(M)}_2\\ &\qquad+ \norm{\left[\Sigma_2(M)\right]^{-1}\left(\Gamma_1(M) - \Gamma_2(M)\right)}_2\\
&=: \Delta_1 + \Delta_2.
\end{align*}
By definition of the operator norm, 
\begin{equation}\label{eq:BoundonDelta2}
\Delta_2 \le \left[\Lambda(k; \Sigma_2)\right]^{-1}\norm{\Gamma_1(M) - \Gamma_2(M)}_2 \le \left[\Lambda(k; \Sigma_2)\right]^{-1}\mathcal{D}\left(k, \Gamma_1 - \Gamma_2\right).
\end{equation}
To control $\Delta_1$, note that
\begin{align*}
\Delta_1 &\le \norm{\left(I_M - \left[\Sigma_2(M)\right]^{-1}\Sigma_1(M)\right)\left[\Sigma_1(M)\right]^{-1}\Gamma_1(M)}_2\\
&\le \norm{\left(I_M - \left[\Sigma_2(M)\right]^{-1}\Sigma_1(M)\right)}_{op}\norm{\beta_M(\Sigma_1, \Gamma_1)}_2\\
&\le \left[\Lambda(k; \Sigma_2)\right]^{-1}\norm{\Sigma_1(M) - \Sigma_2(M)}_{op}\norm{\beta_M(\Sigma_1, \Gamma_1)}_2\\
&\le \left[\Lambda(k; \Sigma_2)\right]^{-1}\RIP(k, \Sigma_1 - \Sigma_2)\norm{\beta_M(\Sigma_1, \Gamma_1)}_2,
\end{align*}
where $I_M$ represents the identity matrix of dimension $|M|\times|M|$. Now combining bounds on $\Delta_1, \Delta_2$, we get
\[
\norm{\beta_M(\Sigma_1, \Gamma_1) - \beta_M(\Sigma_2, \Gamma_2)}_2 \le \frac{\mathcal{D}(k, \Gamma_1 - \Gamma_2) + \RIP(k,\Sigma_1 - \Sigma_2)\norm{\beta_M(\Sigma_1, \Gamma_1)}_2}{\Lambda(k; \Sigma_2)}.
\]
{Subtracting and adding $\beta_{M}(\Sigma_2, \Gamma_2)$ from $\beta_M(\Sigma_1, \Gamma_1)$, we get
\begin{align*}
\norm{\beta_M(\Sigma_1, \Gamma_1) - \beta_M(\Sigma_2, \Gamma_2)}_2 &\le \frac{\mathcal{D}(k, \Gamma_1 - \Gamma_2) + \RIP(k,\Sigma_1 - \Sigma_2)\norm{\beta_M(\Sigma_2, \Gamma_2)}_2}{\Lambda(k; \Sigma_2)}\\ 
&\qquad+ \frac{\RIP(k, \Sigma_1 - \Sigma_2)}{\Lambda(k, \Sigma_2)}\|\beta_M(\Sigma_1, \Gamma_1) - \beta_M(\Sigma_2, \Gamma_2)\|_2.
\end{align*}
Solving this inequality under} assumption \eqref{eq:RIPConverge}, it follows for all $M\in\mathcal{M}(k)$ that
\[
\norm{\beta_M(\Sigma_1, \Gamma_1) - \beta_{M}(\Sigma_2, \Gamma_2)}_2 \le \frac{\mathcal{D}(k, \Gamma_1 - \Gamma_2) + \RIP(k, \Sigma_1 - \Sigma_2)\norm{\beta_{M}(\Sigma_2, \Gamma_2)}_2}{\Lambda(k; \Sigma_2) - \RIP(k; \Sigma_2)}.
\]
This proves the result.
\end{proof}
As will be seen in the application of Theorem \ref{thm:L2UniformConsis}, the complicated looking bound provided above gives the ``optimal'' bound. Combining Proposition \ref{prop:UniformNonSing} and Theorem \ref{thm:L2UniformConsis}, we get the following simple corollary that gives sub-optimal rates.
\begin{cor}\label{cor:L2UniformConsis}
Let $k \ge 1$ be any integer such that 
\[
k\vertiii{\Sigma_1 - \Sigma_2}_{\infty} \le \Lambda(k; \Sigma_2).
\]
Then
\begin{align*}
\sup_{M\in\mathcal{M}(k)}\,&\norm{\beta_M(\Sigma_1, \Gamma_1) - \beta_M(\Sigma_2, \Gamma_2)}_2\le \frac{k^{1/2}\norm{\Gamma_1 - \Gamma_2}_{\infty} + k\vertiii{\Sigma_1 - \Sigma_2}_{\infty}S_{2,k}(\Sigma_2, \Gamma_2)}{\Lambda(k; \Sigma_2) - k\vertiii{\Sigma_1 - \Sigma_2}_{\infty}}.
\end{align*}
\end{cor}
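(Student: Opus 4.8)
The plan is to obtain this corollary as a direct specialization of Theorem~\ref{thm:L2UniformConsis}, feeding in the elementwise estimates of Proposition~\ref{prop:UniformNonSing}. The argument has three stages: first verify the hypothesis of the theorem, then apply its conclusion and take the supremum over $M$, and finally replace the error norms by their $\vertiii{\cdot}_\infty$- and $\norm{\cdot}_\infty$-majorants.

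First I would check that the corollary's assumption forces the hypothesis \eqref{eq:RIPConverge} of Theorem~\ref{thm:L2UniformConsis}. By the first bound of Proposition~\ref{prop:UniformNonSing}, $\RIP(k, \Sigma_1 - \Sigma_2) \le k\vertiii{\Sigma_1 - \Sigma_2}_{\infty}$, and the standing assumption $k\vertiii{\Sigma_1 - \Sigma_2}_{\infty} \le \Lambda(k; \Sigma_2)$ then yields $\RIP(k, \Sigma_1 - \Sigma_2) \le \Lambda(k; \Sigma_2)$, which is exactly \eqref{eq:RIPConverge}. Hence Theorem~\ref{thm:L2UniformConsis} applies and gives the bound \eqref{eq:MarginalL2} for every fixed $M \in \mathcal{M}(k)$. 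Taking the supremum over $M$, I would note that $\RIP(k, \Sigma_1 - \Sigma_2)$, $\mathcal{D}(k, \Gamma_1 - \Gamma_2)$ and $\Lambda(k; \Sigma_2)$ carry no $M$-dependence, so the only quantity varying with $M$ is $\norm{\beta_M(\Sigma_2, \Gamma_2)}_2$ in the numerator; its supremum over $\mathcal{M}(k)$ is precisely $S_{2,k}(\Sigma_2, \Gamma_2)$ by definition \eqref{eq:StrengthS}. This produces the intermediate bound with $\RIP$, $\mathcal{D}$, and $S_{2,k}$ still appearing.

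The one step that requires genuine (if light) care is the final substitution of the Proposition~\ref{prop:UniformNonSing} majorants, because $\RIP(k,\Sigma_1 - \Sigma_2)$ enters the fraction positively in the numerator but with a minus sign in the denominator. Increasing $\RIP$ toward its upper bound $k\vertiii{\Sigma_1 - \Sigma_2}_\infty$ therefore enlarges the whole quotient only if the denominator stays positive, and this is exactly what the corollary's hypothesis guarantees, since it ensures $\Lambda(k;\Sigma_2) - k\vertiii{\Sigma_1 - \Sigma_2}_\infty \ge 0$ (if this difference is zero the right-hand side is $+\infty$ and the claim is trivial). Granting strict positivity, the map $r \mapsto (\,\mathcal{D} + r\,S_{2,k})/(\Lambda(k;\Sigma_2) - r)$ is increasing in $r$ on $[0, \Lambda(k;\Sigma_2))$, so replacing $\RIP$ by $k\vertiii{\Sigma_1 - \Sigma_2}_\infty$ enlarges the bound; substituting $\mathcal{D}(k, \Gamma_1 - \Gamma_2) \le k^{1/2}\norm{\Gamma_1 - \Gamma_2}_\infty$ in the numerator then delivers the stated inequality. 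There is no real obstacle here beyond this monotonicity bookkeeping, as the corollary is purely a simplified, looser restatement of the theorem.
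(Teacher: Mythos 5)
Your proof is correct and takes essentially the same route as the paper, which derives this corollary exactly by combining Proposition~\ref{prop:UniformNonSing} with Theorem~\ref{thm:L2UniformConsis}. The monotonicity check you supply---that $r \mapsto (\mathcal{D} + r\,S_{2,k})/(\Lambda(k;\Sigma_2) - r)$ is increasing on $[0, \Lambda(k;\Sigma_2))$, so replacing $\RIP(k,\Sigma_1-\Sigma_2)$ and $\mathcal{D}(k,\Gamma_1-\Gamma_2)$ by their majorants only enlarges the bound---is precisely the bookkeeping the paper leaves implicit, and you handle the degenerate case $\Lambda(k;\Sigma_2) = k\vertiii{\Sigma_1-\Sigma_2}_{\infty}$ correctly as well.
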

\begin{rem}\,(Bounding $S_{2,k}$ in~\eqref{eq:StrengthS})\label{rem:StrengthBounding}
The bound for uniform $L_2$-consistency requires a bound on~$\norm{\beta_M(\Sigma_2, \Gamma_2)}_2$ in  addition to bounds on the error norms related to $\Sigma$-matrices and $\Gamma$-vectors. It is a priori not clear how this quantity might vary as the dimension of the submodel $M$ changes. In the classical analysis of linear regression where a true linear model is assumed, the true parameter vector $\beta$ is seen as something chosen by nature and hence its norm is not under control of the statistician. {Hence}, in the classical analysis, a growth rate on $\norm{\beta}_2$ is imposed as an assumption. 

From the viewpoint taken in this paper, under misspecification  nature picks the whole distribution sequence of random vectors and hence the quantity $\beta_M(\cdot, \cdot)$ that came up in the analysis. In the full generality of linear regression maps considered here, we do not know of any techniques to bound the norm of this vector. It is, however, possible to bound it if $\beta_M(\cdot, \cdot)$ is defined by a least squares linear regression problem. Recall the definition of $\Sigma_n, \Gamma_n$ from~\eqref{eq:MatrixVectorExpected} and $\beta_{n,M}$ from~\eqref{eq:LeastSquaresExpected}. Observe that by definition of $\beta_{n,M}$,
\[
0 \le \frac{1}{n}\sum_{i=1}^n \mathbb{E}\left[\left\{Y_i - X_i^{\top}(M)\beta_{n,M}\right\}^2\right] \le \frac{1}{n}\sum_{i=1}^n \mathbb{E}\left[Y_i^2\right] - \beta_{n,M}^{\top}\Sigma_n(M)\beta_{n,M}.
\]
{This holds because $\beta_{n,M}$ satisfies $n^{-1}\sum_{i=1}^n \mathbb{E}[X_{i}(M)Y_i] = n^{-1}\sum_{i=1}^n \mathbb{E}[X_i(M)X_{i}^{\top}(M)\beta_{n,M}] = \Sigma_n(M)\beta_{n,M}$.}
Hence for every $M\in\mathcal{M}(p)$,
\[
\norm{\beta_{n,M}}_2^2\lambda_{\min}\left(\Sigma_n(M)\right) \le \beta_{n,M}\Sigma_n(M)\beta_{n,M} \le \frac{1}{n}\sum_{i=1}^n \mathbb{E}\left[Y_i^2\right].
\]
Therefore, using the definitions of $\Lambda(k; \Sigma_n)$ and $S_{r,k}$ in \eqref{eq:MinimalkSparse} and \eqref{eq:StrengthS},
\begin{equation}\label{eq:StrengthBounds}
\begin{split}
S_{2,k}(\Sigma_n, \Gamma_n) &\le\left(\frac{1}{n\Lambda(k; \Sigma_n)}\sum_{i=1}^n \mathbb{E}\left[Y_i^2\right]\right)^{1/2},\\
S_{1,k}(\Sigma_n, \Gamma_n) &\le \left(\frac{k}{n\Lambda(k; \Sigma_n)}\sum_{i=1}^n \mathbb{E}\left[Y_i^2\right]\right)^{1/2}.
\end{split}
\end{equation}
It is immediate from these results that if the second moment of the response is uniformly bounded, then $S_{2,k}$ behaves like a constant when $\Sigma_n$ is well-conditioned. See \cite{Barb11} for a similar calculation.
\end{rem}
Based on the uniform-in-submodel $\norm{\cdot}_2$-bound, the following result is trivially proved.
\begin{thm}(Uniform $L_1$-consistency)\label{thm:L1UniformConsis}
Let $k \ge 1$ be such that 
\begin{equation}\label{eq:RIPConvergeL1}
\RIP(k, \Sigma_1 - \Sigma_2) \le \Lambda(k; \Sigma_2).
\end{equation} 
Then simultaneously for all $M\in\mathcal{M}(k)$,
\begin{equation}\label{eq:MarginalL1}
\begin{split}
\norm{\beta_M(\Sigma_1, \Gamma_1) \right.&-\left. \beta_{M}(\Sigma_2, \Gamma_2)}_1\\ &\le |M|^{1/2}\frac{\mathcal{D}\left(k, \Gamma_1 - \Gamma_2\right) + \RIP(k, \Sigma_1 - \Sigma_2)\norm{\beta_{M}(\Sigma_2, \Gamma_2)}_2}{\Lambda(k; \Sigma_2) - \RIP(k, \Sigma_1 - \Sigma_2)}.
\end{split}
\end{equation}
\end{thm}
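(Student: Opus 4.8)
The plan is to deduce the $L_1$ bound directly from the $L_2$ bound of Theorem~\ref{thm:L2UniformConsis} by invoking the standard comparison between the $\norm{\cdot}_1$- and $\norm{\cdot}_2$-norms on a finite-dimensional vector. The central observation is that for each fixed $M\in\mathcal{M}(k)$, the difference $\beta_M(\Sigma_1, \Gamma_1) - \beta_M(\Sigma_2, \Gamma_2)$ is a vector in $\mathbb{R}^{|M|}$, so it has at most $|M|$ nonzero coordinates; that is, $\norm{\beta_M(\Sigma_1, \Gamma_1) - \beta_M(\Sigma_2, \Gamma_2)}_0 \le |M|$.

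First I would apply the first inequality in~\eqref{eq:MatrixVectorIneq}, namely $\norm{v}_1 \le \norm{v}_0^{1/2}\norm{v}_2$, to the vector $v = \beta_M(\Sigma_1, \Gamma_1) - \beta_M(\Sigma_2, \Gamma_2)$. Combined with the coordinate count above, this yields
\[
\norm{\beta_M(\Sigma_1, \Gamma_1) - \beta_M(\Sigma_2, \Gamma_2)}_1 \le |M|^{1/2}\,\norm{\beta_M(\Sigma_1, \Gamma_1) - \beta_M(\Sigma_2, \Gamma_2)}_2,
\]
a purely algebraic step requiring no hypothesis beyond the dimension of the ambient space.

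Next I would observe that the hypothesis~\eqref{eq:RIPConvergeL1} is identical to~\eqref{eq:RIPConverge}, so Theorem~\ref{thm:L2UniformConsis} applies verbatim and furnishes the bound~\eqref{eq:MarginalL2} on the $\norm{\cdot}_2$-norm of the difference, simultaneously for all $M\in\mathcal{M}(k)$. Substituting that $L_2$ bound into the right-hand side of the displayed inequality above produces exactly~\eqref{eq:MarginalL1}, with the extra factor $|M|^{1/2}$ carried along unchanged. Since the argument is performed for an arbitrary fixed $M\in\mathcal{M}(k)$ and the $L_2$ bound already holds simultaneously over all such $M$, the resulting $L_1$ bound is likewise simultaneous.

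There is no genuine obstacle here: the result is an immediate corollary of Theorem~\ref{thm:L2UniformConsis} together with the norm comparison, which is precisely why the statement is described as trivially proved. The only point meriting a moment's care is recognizing that the relevant sparsity level for the difference vector is $|M|$ (the dimension of the fitted model) rather than $k$, so that the factor appearing inside the bound is $|M|^{1/2}$ and not $k^{1/2}$; keeping the model-dependent $|M|^{1/2}$ rather than the cruder uniform bound $k^{1/2}$ is what makes~\eqref{eq:MarginalL1} sharp for small models.
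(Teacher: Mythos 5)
Your proof is correct and follows exactly the paper's own argument: the paper proves this theorem by applying the first inequality in \eqref{eq:MatrixVectorIneq}, $\norm{v}_1 \le \norm{v}_0^{1/2}\norm{v}_2$, to the difference vector (which lies in $\mathbb{R}^{|M|}$) and then invoking the $L_2$ bound of Theorem~\ref{thm:L2UniformConsis} under the identical hypothesis. Your additional remark about keeping the model-dependent factor $|M|^{1/2}$ rather than $k^{1/2}$ is a correct reading of the statement as well.
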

\begin{proof}
The proof follows by using the first inequality in \eqref{eq:MatrixVectorIneq}.
\end{proof}
The results above only prove a rate of convergence that gives uniform consistency. They are therefore not readily applicable for (asymptotic) inference. For inference about a parameter, an asymptotic distribution result is required, usually asymptotic normality, which is typically proved by way of an asymptotic linear representation. In what follows we derive a uniform-in-submodel linear representation for the linear regression map. The result in terms of the regression map itself is somewhat abstract, hence it might be helpful to revisit the usual estimators $\hat{\beta}_{n,M}$ and $\beta_{n,M}$ from~\eqref{eq:LeastSquaresEstimator} and~\eqref{eq:LeastSquaresExpected} to understand what kind of representation is possible. From the definition of $\hat{\beta}_{n,M}$, we have
\[
\hat{\Sigma}_n(M)\hat{\beta}_{n,M} = \hat{\Gamma}_{n}(M) \quad\Rightarrow\quad \hat{\Sigma}_n(M)\left(\hat{\beta}_{n,M} - \beta_{n,M}\right) = \hat{\Gamma}_{n}(M) - \hat{\Sigma}_n(M)\beta_{n,M}.
\]
Assuming $\hat{\Sigma}_n(M)$ and $\Sigma_n(M)$ are close, one would expect
\begin{equation}\label{eq:ApproxRepreError}
\norm{\hat{\beta}_{n,M} - \beta_{n,M} - \left[\Sigma_{n}(M)\right]^{-1}\left(\hat{\Gamma}_n(M) - \hat{\Sigma}_n(M)\beta_{n,M}\right)}_2 \approx 0.
\end{equation}
Note, by substituting all the definitions, that
\[
\left[\Sigma_{n}(M)\right]^{-1}\left(\hat{\Gamma}_n(M) - \hat{\Sigma}_n(M)\beta_{n,M}\right) = \frac{1}{n}\sum_{i=1}^n \left[\Sigma_n(M)\right]^{-1}X_i(M)(Y_i - X_i^{\top}(M)\beta_{n,M}).
\]
This being an average (a linear functional), the left hand side quantity in \eqref{eq:ApproxRepreError} is called the linear representation error. Now, using the same argument and substituting $\Sigma_1$ and $\Sigma_2$ for $\hat{\Sigma}_n$ and $\Sigma_n$, respectively we get the following result. Recall the notations $S_{2,k}(\cdot, \cdot)$ and $\Lambda(\cdot, \cdot)$ from Equations~\eqref{eq:StrengthS} and~\eqref{eq:MinimalkSparse}.
\begin{thm}(Uniform Linear Representation)\label{thm:UniformLinearRep}
Let $k \ge 1$ be any integer such that
\begin{equation}\label{eq:RIPSmall}
\RIP(k, \Sigma_1 - \Sigma_2) \le \Lambda(k; \Sigma_2).
\end{equation}
Then for all submodels $M\in\mathcal{M}(k)$,
\begin{equation}\label{eq:FirstPartLinearRep}
\begin{split}
\norm{\beta_M(\Sigma_1, \Gamma_1) \right.&-\left. \beta_M(\Sigma_2, \Gamma_2) - \left[\Sigma_2(M)\right]^{-1}\left(\Gamma_1(M) - \Sigma_1(M)\beta_M(\Sigma_2, \Gamma_2)\right)}_2\\
&\le \frac{\RIP(k, \Sigma_1 - \Sigma_2)}{\Lambda(k; \Sigma_2)}\norm{\beta_{M}(\Sigma_1, \Gamma_1) - \beta_M(\Sigma_2, \Gamma_2)}_2.
\end{split}
\end{equation}
Furthermore, using Theorem~\ref{thm:L2UniformConsis}, we get
\begin{equation}\label{eq:SecondPartLinearRep}
\begin{split}
\sup_{M\in\mathcal{M}(k)}&\norm{\beta_M(\Sigma_1, \Gamma_1) - \beta_M(\Sigma_2, \Gamma_2) - \left[\Sigma_2(M)\right]^{-1}\left(\Gamma_1(M) - \Sigma_1(M)\beta_M(\Sigma_2, \Gamma_2)\right)}_2\\
&\quad\le \frac{\RIP(k, \Sigma_1 - \Sigma_2)}{\Lambda(k; \Sigma_2)}\frac{\mathcal{D}\left(k, \Gamma_1 - \Gamma_2\right) + \RIP(k, \Sigma_1 - \Sigma_2)S_{2,k}(\Sigma_2, \Gamma_2)}{\Lambda(k; \Sigma_2) - \RIP(k, \Sigma_1 - \Sigma_2)}.
\end{split}
\end{equation}
\end{thm}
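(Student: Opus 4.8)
The plan is to establish the pointwise bound \eqref{eq:FirstPartLinearRep} first and then obtain \eqref{eq:SecondPartLinearRep} by feeding in the uniform $L_2$-bound from Theorem~\ref{thm:L2UniformConsis}. Fix $M\in\mathcal{M}(k)$ and abbreviate $\beta_1 := \beta_M(\Sigma_1, \Gamma_1)$ and $\beta_2 := \beta_M(\Sigma_2, \Gamma_2)$. The crucial observation is that the defining relation $\Sigma_1(M)\beta_1 = \Gamma_1(M)$ lets me rewrite the score-type term exactly: substituting $\Gamma_1(M) = \Sigma_1(M)\beta_1$ gives $\Gamma_1(M) - \Sigma_1(M)\beta_2 = \Sigma_1(M)(\beta_1 - \beta_2)$. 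Consequently the linear representation residual collapses to
\[
\beta_1 - \beta_2 - \left[\Sigma_2(M)\right]^{-1}\left(\Gamma_1(M) - \Sigma_1(M)\beta_2\right) = \left(I_M - \left[\Sigma_2(M)\right]^{-1}\Sigma_1(M)\right)(\beta_1 - \beta_2).
\]

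From here the argument mirrors the bound on $\Delta_1$ in the proof of Theorem~\ref{thm:L2UniformConsis}. I would factor $I_M - [\Sigma_2(M)]^{-1}\Sigma_1(M) = [\Sigma_2(M)]^{-1}(\Sigma_2(M) - \Sigma_1(M))$ and apply submultiplicativity of the operator norm. Two estimates then finish the pointwise bound: first, $\norm{[\Sigma_2(M)]^{-1}}_{op} = 1/\lambda_{\min}(\Sigma_2(M)) \le 1/\Lambda(k; \Sigma_2)$, which holds because any $\theta$ supported on $M$ has $\norm{\theta}_0 \le |M| \le k$, so the smallest eigenvalue of $\Sigma_2(M)$ is at least $\Lambda(k; \Sigma_2)$; and second, $\norm{\Sigma_2(M) - \Sigma_1(M)}_{op} \le \RIP(k, \Sigma_1 - \Sigma_2)$ directly from the definition in \eqref{eq:ErrorNorms}. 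Multiplying these against $\norm{\beta_1 - \beta_2}_2$ yields \eqref{eq:FirstPartLinearRep}. Note that the assumption \eqref{eq:RIPSmall} is not strictly needed for this pointwise step; it serves to guarantee that the maps $\beta_M$ are well-defined and that the denominator appearing below is positive.

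For \eqref{eq:SecondPartLinearRep} I would take the supremum over $M\in\mathcal{M}(k)$ on both sides of \eqref{eq:FirstPartLinearRep}. The prefactor $\RIP(k, \Sigma_1 - \Sigma_2)/\Lambda(k; \Sigma_2)$ is model-free, so it pulls out of the supremum, leaving $\sup_{M}\norm{\beta_1 - \beta_2}_2$. Theorem~\ref{thm:L2UniformConsis} bounds this by the ratio in \eqref{eq:MarginalL2}, and replacing the model-dependent quantity $\norm{\beta_M(\Sigma_2, \Gamma_2)}_2$ by its uniform upper bound $S_{2,k}(\Sigma_2, \Gamma_2)$ from \eqref{eq:StrengthS} produces exactly the stated right-hand side.

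The only genuinely delicate point is the algebraic cancellation in the first display: everything hinges on recognizing that the residual is a clean left-multiple of $\beta_1 - \beta_2$, rather than an expression involving $\beta_2$ alone, and this requires invoking the normal equation for $\beta_1$ rather than for $\beta_2$. Once that identity is in hand, the remainder is a routine repetition of the operator-norm manipulations already developed for Theorem~\ref{thm:L2UniformConsis}, and no further obstacle arises.
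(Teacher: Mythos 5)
Your proposal is correct and follows essentially the same route as the paper: the paper likewise invokes the normal equation $\Sigma_1(M)\beta_M(\Sigma_1,\Gamma_1)=\Gamma_1(M)$ to reduce the residual to $[\Sigma_2(M)]^{-1}(\Sigma_2(M)-\Sigma_1(M))(\beta_M(\Sigma_1,\Gamma_1)-\beta_M(\Sigma_2,\Gamma_2))$, bounds it by the operator-norm estimates $\norm{[\Sigma_2(M)]^{-1}}_{op}\le 1/\Lambda(k;\Sigma_2)$ and $\norm{\Sigma_1(M)-\Sigma_2(M)}_{op}\le \RIP(k,\Sigma_1-\Sigma_2)$, and then feeds in Theorem~\ref{thm:L2UniformConsis} with $S_{2,k}$ for the uniform statement. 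Your side remark that assumption \eqref{eq:RIPSmall} is not needed for the pointwise bound is also consistent with the paper's argument, which only uses it when invoking Theorem~\ref{thm:L2UniformConsis}.
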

\begin{proof}
From the definition~\eqref{eq:LinearRegressionMap} of $\beta_M(\Sigma, \Gamma)$, we have
\begin{align}
\Sigma_1(M)\beta_M(\Sigma_1, \Gamma_1) - \Gamma_1(M) &= 0,\label{eq:FirstPair}\\
\Sigma_2(M)\beta_M(\Sigma_2, \Gamma_2) - \Gamma_2(M) &= 0.\label{eq:SecondPair}
\end{align}
Adding and subtracting $\beta_M(\Sigma_2, \Gamma_2)$ from $\beta_M(\Sigma_1, \Gamma_1)$ in~\eqref{eq:FirstPair}, it follows that
\[
\Sigma_1(M)\left(\beta_M(\Sigma_1, \Gamma_1) - \beta_M(\Sigma_2, \Gamma_2)\right) = \Gamma_1(M) - \Sigma_1(M)\beta_M(\Sigma_2, \Gamma_2).
\]
Now adding and subtracting $\Sigma_2(M)$ from $\Sigma_1(M)$ in this equation, we get
\begin{equation}\label{eq:MainEquation}
\begin{split}
&\left(\Sigma_2(M) - \Sigma_1(M)\right)\left(\beta_M(\Sigma_1, \Gamma_1) - \beta_M(\Sigma_2, \Gamma_2)\right)\\ 
&\qquad= \Sigma_2(M)\left(\beta_M(\Sigma_1, \Gamma_1) - \beta_M(\Sigma_2, \Gamma_2)\right) - \left[\Gamma_1(M) - \Sigma_1(M)\beta_M(\Sigma_2, \Gamma_2)\right].
\end{split}
\end{equation}
The right hand side is almost the quantity we need to bound to establish the result. Multiplying both sides of the equation by $[\Sigma_2(M)]^{-1}$ and then applying the Euclidean norm implies that for $M\in\mathcal{M}(k)$,
\begin{align*}
\norm{\beta_M(\Sigma_1, \Gamma_1) \right.&-\left. \beta_M(\Sigma_2, \Gamma_2) - \left[\Sigma_2(M)\right]^{-1}\left\{\Gamma_1(M) - \Sigma_1(M)\beta_M(\Sigma_2, \Gamma_2)\right\}}_2\\
&\le \frac{\norm{\Sigma_1(M) - \Sigma_2(M)}_{op}}{\Lambda(k; \Sigma_2)}\norm{\beta_M(\Sigma_1, \Gamma_1) - \beta_M(\Sigma_2, \Gamma_2)}_2.
\end{align*}
This proves the first part of the result. The second part of the result follows by the application of Theorem~\ref{thm:L2UniformConsis}.
\end{proof}
\begin{rem}\,(Matching Lower Bounds)
The bound~\eqref{eq:FirstPartLinearRep} only proves an upper bound. It can, however, be seen from Equation~\eqref{eq:MainEquation} that for any $M\in\mathcal{M}(k)$,
\begin{equation*}
\begin{split}
\norm{\beta_M(\Sigma_1, \Gamma_1) \right.&-\left. \beta_M(\Sigma_2, \Gamma_2) - \left[\Sigma_2(M)\right]^{-1}\left(\Gamma_1(M) - \Sigma_1(M)\beta_M(\Sigma_2, \Gamma_2)\right)}_2\\ &\quad= \norm{\left[\Sigma_2(M)\right]^{-1}\left(\Sigma_1(M) - \Sigma_2(M)\right)\left(\beta_M(\Sigma_1, \Gamma_1) - \beta_M(\Sigma_2, \Gamma_2)\right)}_2\\&\quad\ge {C_*(k, \Sigma_2)}{\Lambda(k, \Sigma_1 - \Sigma_2)}\norm{\beta_M(\Sigma_1, \Gamma_1) - \beta_M(\Sigma_2, \Gamma_2)}_2,
\end{split}
\end{equation*}
where
\[
C_*(k, \Sigma_2) := \min_{M\in\mathcal{M}(k)}\lambda_{\min}\left(\left[\Sigma_2(M)\right]^{-1}\right) = \left[\RIP(k, \Sigma_2)\right]^{-1}.
\]
Recall from Equations~\eqref{eq:ErrorNorms} and~\eqref{eq:MinimalkSparse}, that
\[
\RIP(k, \Sigma_2) = \sup_{M\in\mathcal{M}(k)}\norm{\Sigma_2(M)}_{op}\quad\mbox{and}\quad \Lambda(k, \Sigma_1 - \Sigma_2) = \inf_{\theta\in\mathbb{R}^p, \norm{\theta}_0 \le k}\,{\frac{\|(\Sigma_1 - \Sigma_2)\theta\|_2}{\norm{\theta}_2}}.
\]
If the minimal and maximal $k$-sparse singular values of $\Sigma_1 - \Sigma_2$ are of the same order, then the upper and lower bounds for the linear representation error match up to the order under the additional assumption that the minimal and maximal sparse eigenvalues of $\Sigma_2$ are of the same order.
\end{rem}
\begin{rem}\,(Improved $\norm{\cdot}_2$-Error Bounds)
Uniform linear representation error bounds~\eqref{eq:FirstPartLinearRep} and~\eqref{eq:SecondPartLinearRep} prove more than just a linear representation. These bounds allow us to improve the bounds provided for uniform $L_2$-consistency. Bound~\eqref{eq:FirstPartLinearRep} is of the form
\[
\norm{u - v}_2 \le \delta\norm{u}_2\quad\Rightarrow\quad (1 - \delta)\norm{u}_2 \le \norm{v}_2 \le (1 + \delta)\norm{u}_2. 
\]
Therefore, assuming $\RIP(k, \Sigma_1 - \Sigma_2) \le \Lambda(k; \Sigma_2)/2$, it follows that for all $M\in\mathcal{M}(k)$,
\begin{equation}\label{eq:ImprovedL2}
\begin{split}
\frac{1}{2}&\norm{\left[\Sigma_2(M)\right]^{-1}\left(\Gamma_1(M) - \Sigma_1(M)\beta_M(\Sigma_2, \Gamma_2)\right)}_2\\ &\qquad\le \norm{\beta_M(\Sigma_1, \Gamma_1) - \beta_M(\Sigma_2, \Gamma_2)}_2\\ &\qquad\qquad\le 2\norm{\left[\Sigma_2(M)\right]^{-1}\left(\Gamma_1(M) - \Sigma_1(M)\beta_M(\Sigma_2, \Gamma_2)\right)}_2.
\end{split}
\end{equation}
This is a more precise result than informed by Theorem~\ref{thm:L2UniformConsis} because here we characterize the estimation error exactly up to a factor of 2. Also, note that in case of the least squares estimator and target, $\hat{\beta}_{n,M}$ and $\beta_{n,M}$, the upper and lower bounds here are Euclidean norms of averages of random vectors. Dealing with linear functionals like averages is much simpler than dealing with non-linear functionals such as $\hat{\beta}_{n,M}$.

If $\RIP(k, \Sigma_1 - \Sigma_2)$ converges to zero, then the right hand side of bound~\eqref{eq:FirstPartLinearRep} is of smaller order than both the terms appearing on the left hand side (which are the same as those appearing in~\eqref{eq:ImprovedL2}). This means that the linear representation error is of strictly smaller order than the estimator error simultaneously over all $M\in \mathcal{M}(k)$.
\end{rem}
\begin{rem}\,(Alternative to $\RIP$)
A careful inspection of the proof of Theorem~\ref{thm:UniformLinearRep} and Theorem~\ref{thm:L2UniformConsis} reveals that the bounds can be written in terms of 
\[
\sup_{M\in\mathcal{M}(k)}\,\norm{\left[\Sigma_2(M)\right]^{-1/2}{\Sigma}_1(M)\left[\Sigma_2(M)\right]^{-1/2} - I_{|M|}}_{op},
\]
instead of $\RIP(k, \Sigma_1 - \Sigma_2)$. Here $I_{|M|}$ is the identity matrix in $\mathbb{R}^{|M|\times|M|}$. Bounding this quantity might not require a bounded condition number of $\Sigma_2$; however, we will only deal with $\RIP(k, \Sigma_1 - \Sigma_2)$ in the following sections for convenience.
\end{rem}
Summarizing all the results in this section it is sufficient to control
\[
\RIP(k, \Sigma_1 - \Sigma_2)\quad\mbox{and}\quad \mathcal{D}\left(k, \Gamma_1 - \Gamma_2\right)
\]
to derive uniform-in-submodel results in any linear regression type problem. In this respect, these are the norms in which one should measure the accuracy of the Gram matrix and the inner product of covariates and response. Hence if one wishes to use shrinkage estimators, for example, because $\Sigma$ and $\Gamma$ are high-dimensional ``objects'', then the estimation accuracy should be measured with respect to $\RIP$ and $\mathcal{D}$ for uniform-in-submodel type results.

\subsection{Applications of the Linear Regression Map}\label{subsec:ApplicationsMap}
Before proceeding to the rates of convergence of these error norms for independent and dependent data, we describe the importance of defining the linear regression map with general matrices instead of just Gram matrices. The generality achieved so far would be worthless if no interesting applications existed. The goal now is to provide a few such interesting examples. 
\begin{enumerate}
\item \emph{Heavy-Tailed Observations:} The $\RIP(\cdot, \cdot)$-norm is a supremum over all submodels of size $k$ or less, hence the supremum is over
\[
\sum_{s = 1}^k \binom{p}{s} \le \sum_{s = 1}^k \frac{p^s}{s!} = \sum_{s = 1}^k \frac{k^s}{s!}\left(\frac{p}{k}\right)^s \le \left(\frac{ep}{k}\right)^k
\]
number of submodels. This bound is polynomial in the total number of covariates but is exponential in the size of the largest submodel under consideration. Therefore, if the total number of covariates $p$ is allowed to diverge, then the question we are interested in is inherently high-dimensional. If the usual Gram matrices are used then
\[
\RIP(k, \hat{\Sigma}_n - \Sigma_n) = \sup_{|M| \le k}\,\norm{\hat{\Sigma}_n(M) - \Sigma_n(M)}_{op},
\]
hence, $\RIP$ in this case is the supremum in the order of $(ep/k)^k$ many averages. As is well-understood from the literature on concentration of measure or even the union bound, one would require exponential tails on the initial random vectors to allow a good control on $\RIP(\cdot, \cdot)$ if the usual Gram matrix is used. Does this mean that the situation is hopeless if the initial random vectors do not have exponential tails? The short answer is ``not necessarily.'' Viewing the matrix $\Sigma_n$ (the ``population'' Gram matrix) as a target, there have been many variations of sample mean Gram matrix estimators that are shown to provide exponential tails even though the initial observations are heavy tailed. See, for example, \cite{Catoni12}, \cite{Wei17} and \cite{Catoni17}, along with the references therein, for more details on a specific estimator and its properties. It should be noted that these authors do not study the estimator accuracy with respect to the RIP-norm. 
\item \emph{Outlier Contamination:} Real data, more often than not, is contaminated with outliers, and it is a difficult problem to remove or downweight observations when contamination is present. Robust statistics provide estimators that can ignore or downweight the observations suspected to be outliers and yet perform comparably when there is no contamination present in the data. Some simple examples include entry-wise medians or trimmed means. See \cite{Minsker15} and references therein for some more examples. Almost none of these estimators are simple averages but behave regularly in the sense that they can be expressed as averages up to a negligible asymptotic remainder term. \cite{Chen13} provide a simple estimator of the Gram matrix under adversarial corruption and case-wise contamination.
\item \emph{Indirect Observations:} This example is taken from \cite{Loh12}. The setting is as follows. Instead of observing the real random vectors $(X_1, Y_1)$, $\ldots$, $(X_n, Y_n)$, we observe a sequence $(Z_1, Y_1), \ldots, (Z_n, Y_n)$ with $Z_i$ linked with $X_i$ via some conditional distribution that is for $1\le i\le n$,
\[
Z_i\sim Q(\cdot|X_i).
\]
As discussed on page 4 of \cite{Loh12}, this setting includes some interesting cases such as missing data and noisy covariates. A brief hint of the settings is given below:
\begin{itemize}
\item[--] If $Z_i = X_i + W_i$ where $W_i$ is independent of $X_i$ and has mean zero with a known covariance matrix.
\item[--] For some fraction $\rho\in[0, 1)$, we observe a random vector $Z_i\in\mathbb{R}^p$ such that for each component $j$, we independently observe $Z_{i}(j) = X_{i}(j)$ with probability $1 - \rho$ and $Z_i(j) = *$ with probability $\rho$. (Here $*$ means a missing value.)
\item[--] If $Z_i = X_i\odot u_i,$ where $u_i\in\mathbb{R}^p$ is again a random vector independent of $X_i$ and $\odot$ is the Hadamard (coordinate-wise) product. The problem of missing data is a special case.
\end{itemize}
On page 6, \cite{Loh12} provide various estimators in place of $\hat{\Sigma}_n$ in~\eqref{eq:MatrixVectorEstimators}. The assumption in Lemma 12 of \cite{Loh12} is essentially a bound on the $\RIP$-norm in our notation, and they verify this assumption in all the examples above. Hence all our results in this section apply to these settings. 
\end{enumerate}
\subsection{Application of Deterministic Inequalities to OLS}
In the following two sections, we prove finite sample non-asymptotic bounds for $\RIP(k, \Sigma_1 - \Sigma_2)$ and $\mathcal{D}(k, {\Gamma_1 - \Gamma_2})$ when 
$$\Sigma_1 = \hat{\Sigma}_n,\quad \Sigma_2 = \Sigma_n\quad\mbox{and}\quad \Gamma_1 = \hat{\Gamma}_n,\quad \Gamma_2 = \Gamma_n.$$ 
See Equations \eqref{eq:MatrixVectorEstimators} and \eqref{eq:MatrixVectorExpected}. For convenience, we rewrite Theorem \ref{thm:UniformLinearRep} for this setting. Also, for notational simplicity, let
\begin{equation}\label{eq:SimplifiedNotation}
\Lambda_n(k) := \Lambda(k, \Sigma_n),\,\, \RIP_n(k) := \RIP(k, \hat{\Sigma}_n - \Sigma_n)\quad\mbox{and}\quad\mathcal{D}_n(k) := \mathcal{D}(k, \hat{\Gamma}_n - \Gamma_n).
\end{equation}
Recall the definition of $\hat{\beta}_{n,M}$, $\beta_{n,M}$ and $S_{2,k}$ from~\eqref{eq:LeastSquaresEstimatorClosed}, \eqref{eq:LeastSquaresExpected} and~\eqref{eq:StrengthS}.
\begin{thm}\label{thm:SimplifiedLinearRep}
Let $k\ge 1$ be any integer such that $\RIP_n(k) \le \Lambda_n(k)$. Then for all submodels $M\in\mathcal{M}(k)$,
\begin{align*}
\sup_{M\in\mathcal{M}(k)}&\norm{\hat{\beta}_{n,M} - \beta_{n,M} - \frac{1}{n}\sum_{i=1}^n \left[\Sigma_n(M)\right]^{-1}X_i(M)\left(Y_i - X_i^{\top}(M)\beta_{n,M}\right)}_2\\
&\le \frac{\RIP_n(k)}{\Lambda_n(k)}\left(\frac{\mathcal{D}_n(k) + \RIP_n(k)S_{2,k}(\Sigma_n, \Gamma_n)}{\Lambda_n(k) - \RIP_n(k)}\right).
\end{align*}
\end{thm}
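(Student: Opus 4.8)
The plan is to obtain this result as a direct specialization of Theorem~\ref{thm:UniformLinearRep}, so essentially no new work is required beyond substituting the concrete matrices and vectors and rewriting a single term.

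First I would make the assignment $\Sigma_1 = \hat{\Sigma}_n$, $\Sigma_2 = \Sigma_n$, $\Gamma_1 = \hat{\Gamma}_n$, and $\Gamma_2 = \Gamma_n$ as prescribed. Under this choice the linear regression map~\eqref{eq:LinearRegressionMap} collapses to the explicit estimators via~\eqref{eq:LeastSquaresEstimatorClosed} and~\eqref{eq:LeastSquaresExpected}: one has $\beta_M(\hat{\Sigma}_n, \hat{\Gamma}_n) = \hat{\beta}_{n,M}$ and $\beta_M(\Sigma_n, \Gamma_n) = \beta_{n,M}$. The error norms and the sparse minimum eigenvalue translate directly into the shorthand of~\eqref{eq:SimplifiedNotation}, namely $\RIP(k, \Sigma_1 - \Sigma_2) = \RIP_n(k)$, $\mathcal{D}(k, \Gamma_1 - \Gamma_2) = \mathcal{D}_n(k)$, and $\Lambda(k; \Sigma_2) = \Lambda_n(k)$. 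The hypothesis $\RIP_n(k) \le \Lambda_n(k)$ is precisely condition~\eqref{eq:RIPSmall} required to invoke Theorem~\ref{thm:UniformLinearRep}, and the right-hand side of~\eqref{eq:SecondPartLinearRep} then reproduces verbatim the bound claimed here, with $S_{2,k}$ evaluated at $(\Sigma_n, \Gamma_n)$.

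The only point requiring verification is that the linear-representation term $[\Sigma_2(M)]^{-1}(\Gamma_1(M) - \Sigma_1(M)\beta_M(\Sigma_2, \Gamma_2))$ from~\eqref{eq:SecondPartLinearRep} coincides with the average $\tfrac{1}{n}\sum_{i=1}^n [\Sigma_n(M)]^{-1}X_i(M)(Y_i - X_i^{\top}(M)\beta_{n,M})$ appearing in the statement. This is exactly the identity already recorded just after~\eqref{eq:ApproxRepreError}: substituting $\hat{\Gamma}_n(M) = \tfrac{1}{n}\sum_i X_i(M)Y_i$ and $\hat{\Sigma}_n(M) = \tfrac{1}{n}\sum_i X_i(M)X_i^{\top}(M)$ from~\eqref{eq:MatrixVectorEstimators} gives
\[
\hat{\Gamma}_n(M) - \hat{\Sigma}_n(M)\beta_{n,M} = \frac{1}{n}\sum_{i=1}^n X_i(M)\left(Y_i - X_i^{\top}(M)\beta_{n,M}\right),
\]
and left-multiplication by $[\Sigma_n(M)]^{-1}$ yields the asserted average.

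Since every ingredient is either a substitution or a bookkeeping identity, there is no genuine obstacle; the result is simply Theorem~\ref{thm:UniformLinearRep} restated in the least squares setting. The one thing worth being careful about is notational consistency — confirming that $\beta_M(\Sigma_1, \Gamma_1)$ is $\hat{\beta}_{n,M}$ (and not $\beta_{n,M}$), that $[\Sigma_2(M)]^{-1}$ is built from the \emph{population} matrix $\Sigma_n(M)$, and that the $S_{2,k}$ argument is the unhatted pair — after which~\eqref{eq:SecondPartLinearRep} matches the claimed inequality term for term.
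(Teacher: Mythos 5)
Your proposal is correct and coincides with the paper's own treatment: the paper presents Theorem~\ref{thm:SimplifiedLinearRep} precisely as a rewriting of Theorem~\ref{thm:UniformLinearRep} under the substitution $\Sigma_1 = \hat{\Sigma}_n$, $\Sigma_2 = \Sigma_n$, $\Gamma_1 = \hat{\Gamma}_n$, $\Gamma_2 = \Gamma_n$, with the linear-representation term converted to the average $\tfrac{1}{n}\sum_{i=1}^n [\Sigma_n(M)]^{-1}X_i(M)(Y_i - X_i^{\top}(M)\beta_{n,M})$ exactly by the identity recorded after~\eqref{eq:ApproxRepreError}. Your attention to the notational checks (hatted versus unhatted arguments, and the population matrix in the inverse) covers the only places where a substitution error could occur, so nothing is missing.
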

Recall here that $\Gamma_n$ and $\Sigma_n$ are \textbf{non-random} vectors/matrices given in~\eqref{eq:MatrixVectorExpected}. So Theorem~\ref{thm:SimplifiedLinearRep} (which is still a deterministic inequality) can be used to prove an asymptotic uniform linear representation. 
\begin{rem}\,(Non-uniform Bounds)
The bound above applies for any $k$ satisfying the assumption $\RIP_n(k) \le \Lambda_n(k)$. Noting that for $M\in\mathcal{M}(k)$, $\RIP_n(|M|) \le \RIP_n(k)$ as well as $\Lambda_n(|M|) \ge \Lambda_n(k)$, Theorem~\ref{thm:SimplifiedLinearRep} implies that
\begin{align*}
&\norm{\hat{\beta}_{n,M} - \beta_{n,M} - \frac{1}{n}\sum_{i=1}^n \left[\Sigma_n(M)\right]^{-1}X_i(M)\left(Y_i - X_i^{\top}(M)\beta_{n,M}\right)}_2\\ 
&\qquad\le \frac{\RIP_n(|M|)}{\Lambda_n(|M|)}\left(\frac{\mathcal{D}_n(|M|) + \RIP_n(|M|)S_{2,|M|}(\Sigma_n, \Gamma_n)}{\Lambda_n(|M|) - \RIP_n(|M|)}\right).
\end{align*}
The point made here is that even though the bound in Theorem~\ref{thm:SimplifiedLinearRep} only uses the maximal submodel size, it can recover submodel size dependent bounds because the result is proved for every $k$.
\end{rem}
\begin{rem}\,(Post-selection Consistency)
One of the main aspects of our results is in proving consistency of the least squares linear regression estimator after data exploration. Suppose a random submodel $\hat{M}$ chosen based on data satisfies $|\hat{M}| \le k$ with probability converging to one, that is, $\mathbb{P}(\hat{M}\in\mathcal{M}(k)) \to 1$. Then, with probability converging to one,
\[
\norm{\hat{\beta}_{n,\hat{M}} - \beta_{n,\hat{M}}}_2 \le \sup_{M\in\mathcal{M}(k)}\norm{\hat{\beta}_{n,M} - \beta_{n,M}}_2.
\]
A similar bound also holds for the linear representation error. Therefore, the uniform-in-submodel results above allow us to prove consistency and asymptotic normality of the least squares linear regression estimator after data exploration. See \cite{Belloni13} for related applications and methods of choosing the random submodel $\hat{M}$.
\end{rem}
\begin{rem}\,(Bounding $S_{2,k}$)
As shown in Remark~\ref{rem:StrengthBounding}, for the setting of averages
\begin{equation}\label{eq:S2kBounded}
S_{2,k}(\Sigma_n, \Gamma_n) \le \left(\frac{1}{n\Lambda_n(k)}\sum_{i=1}^n \mathbb{E}\left[Y_i^2\right]\right)^{1/2}.
\end{equation}
The quantity on the right hand side of~\eqref{eq:S2kBounded} is of the order $\Lambda_n^{-1/2}(k)$ under the assumption of bounded second moments of the $Y_i$'s. Therefore, we will not further write $S_{2,k}$ explicitly and just use $\Lambda_n^{-1/2}(k)$ instead.
\end{rem}
\section{Rates for Independent Observations}\label{sec:Independent}
In this section, we derive bounds for $\RIP_n(k)$ and $\mathcal{D}_n(k)$ defined in~\eqref{eq:SimplifiedNotation} under the assumption of independence and weak exponential tails. The setting is as follows. Suppose $(X_1, Y_1), \ldots, (X_n, Y_n)$ are a sequence of independent random vectors in $\mathbb{R}^{p}\times\mathbb{R}$. Consider the following assumptions:
\begin{description}
\item[\namedlabel{eq:Marginal}{(MExp)}] Assume that there exist positive numbers $\alpha > 0$, and $K_{n,p} > 0$ such that
\[
\max_{1\le j\le p}\max\left\{\norm{X_i(j)}_{\psi_{\alpha}}, \norm{Y_i}_{\psi_{\alpha}}\right\} \le K_{n,p}\quad\mbox{for all}\quad 1\le i\le n.
\]
\item[\namedlabel{eq:Joint}{(JExp)}] Assume that there exist positive numbers $\alpha > 0$, and $K_{n,p} > 0$ such that
\[
\max\left\{\norm{X_i^{\top}\theta}_{\psi_{\alpha}}, \norm{Y_i}_{\psi_{\alpha}}\right\} \le K_{n,p}\quad\mbox{for all}\quad \theta\in\mathbb{R}^p,\, \norm{\theta}_2 \le 1,\, 1\le i\le n.
\]
\end{description}
Recall that $X_i(j)$ means the $j$-th coordinate of $X_i$. The notation $\norm{\cdot}_{\psi_{\alpha}}$ refers to a quasi-norm defined by
\[
\norm{W}_{\psi_{\alpha}} := \inf\left\{C > 0:\, \mathbb{E}\left[\exp\left(\frac{|W|^{\alpha}}{C^{\alpha}}\right)\right] \le 2\right\},
\]
for any random variable $W$. Random variables $W$ satisfying $\norm{W}_{\psi_{\alpha}} < \infty$ are referred to as sub-Weibull of order $\alpha$, because $\norm{W}_{\psi_{\alpha}} < \infty$ implies that for all $t \ge 0,$
\[
\mathbb{P}\left(|W| \ge t\right) \le 2\exp\left(-\frac{t^{\alpha}}{\norm{W}_{\psi_{\alpha}}^{\alpha}}\right),
\]
where the right hand side resembles the survival function of a Weibull random variable of order $\alpha > 0$ (see~\cite{KuchAbhi17} for more details). The special cases $\alpha = 1, 2$ are very much used in the  high-dimensional literature as assumed tail behaviors. A random variable $W$ satisfying $\norm{W}_{\psi_{\alpha}} < \infty$ with $\alpha = 2$ is called sub-Gaussian, and with $\alpha = 1$ it is called sub-exponential (see \cite{VdvW96} for more details).

It is easy to see that Assumption~\ref{eq:Joint} implies Assumption~\ref{eq:Marginal}. We refer to Assumption~\ref{eq:Marginal} as a marginal assumption and Assumption~\ref{eq:Joint} as a joint assumption. It should be noted that Assumption~\ref{eq:Joint} is much stronger than~\ref{eq:Marginal} because~\ref{eq:Joint} implies that the coordinates of $X_i$ should be ``almost'' independent (see Chapter 3 of \cite{Vershynin18} and~\cite{KuchAbhi17} for further discussion).

The following results bound $\mathcal{D}_n(k)$ and $\RIP_n(k)$ based on Theorem~\ref{thm:IndependenceTailBound} in Appendix~\ref{AppSec:Independence}. Because $\RIP_n(k)$ involves operator norms over $k$-sparse unit balls, we will bound it using $\varepsilon$-nets for the union of these unit balls. This will also be useful for bounding $\mathcal{D}_n(k)$. Before stating the results, we need the following preliminary calculations and notations. For any set $K$ with metric $d(\cdot, \cdot)$, a set $\mathcal{N}$ is called a $\gamma$-net of $K$ with respect to $d$ if $\mathcal{N}\subset K$ and for any $z\in K$ there exists an $x\in\mathcal{N}$ such that $d(x,z) \le \gamma$. Let $\norm{\cdot}_2$ denote the Euclidean norm and define the $d$-dimensional unit ball by
\[
\mathcal{B}_{2,d} := \left\{x\in\mathbb{R}^d:\,\norm{x}_2 \le 1\right\}.
\]
Let $\mathcal{N}_d(\varepsilon)$ represent an $\varepsilon$-net of $\mathcal{B}_{2,d}$ with respect to the Euclidean norm. Define the $k$-sparse subset of the unit ball in $\mathbb{R}^p$ as
\begin{equation}\label{eq:kSparseSubsetDef}
\Theta_k := \left\{\theta\in\mathbb{R}^p:\,\norm{\theta}_0 \le k,\, \norm{\theta}_2 \le 1\right\}.
\end{equation}
With some abuse of notation, a disjoint decomposition of $\Theta_k$ can be written as
\[
\Theta_k = \bigcup_{s = 1}^k \bigcup_{|M| = s}\mathcal{B}_{2,s}.
\]
The last union includes repetition of $\mathcal{B}_{2,s}$ as subsets of $\mathbb{R}^p$ with unequal supports. Using this decomposition, it follows that a $1/4$-net $\mathcal{N}(\varepsilon, \Theta_k)$ of $\Theta_k$ with respect to the Euclidean norm on $\mathbb{R}^p$ can be chosen to satisfy
\[
\mathcal{N}\left(\varepsilon, \Theta_k\right) \subseteq \bigcup_{s = 1}^k \bigcup_{|M| = s} \mathcal{N}_s(\varepsilon),
\]
and hence, can be bounded in cardinality by
\[
\left|\mathcal{N}(\varepsilon, \Theta_k)\right| \le \sum_{s = 1}^k \binom{p}{s} \left|\mathcal{N}_{s}(\varepsilon)\right|.
\]  
Applying Lemma 4.1 of \cite{Pollard90} it follows that
\[
|\mathcal{N}_s(\varepsilon)| \le (1 + \varepsilon^{-1})^{s}\quad\Rightarrow\quad \left|\mathcal{N}(\varepsilon, \Theta_k)\right| \le \sum_{s = 1}^k \binom{p}{s}(1 + \varepsilon^{-1})^s \le \left(\frac{(1 + \varepsilon^{-1})ep}{k}\right)^k.
\]
(Lemma 4.1 of \cite{Pollard90} provides the bound on the covering number to be $(3/\varepsilon)^d$ but it can be improved from the proof to $(1 + 1/\varepsilon)^{d}$.) Here one can choose the elements of the covering set $\mathcal{N}_s(\varepsilon)$ to be $s$-sparse in $\mathbb{R}^p.$ See Lemma 3.3 of \cite{Plan13} for a similar result. Based on these calculations and the covering set $\mathcal{N}(\varepsilon, \Theta_k)$, we bound $\mathcal{D}_n(k)$ and $\RIP_n(k)$ by a finite maximum of mean zero averages.

Observe that
\begin{align*}
\mathcal{D}_n(k) &= \sup_{\theta\in\Theta_k}\,\theta^{\top}\left(\hat{\Gamma}_n - \Gamma_n\right)\\
&\le \sup_{\alpha\in\mathcal{N}(1/2, \Theta_k)}\, \alpha^{\top}\left(\hat{\Gamma}_n - \Gamma_n\right) + \sup_{\beta\in\Theta_k/2}\,\beta^{\top}\left(\hat{\Gamma}_n - \Gamma_n\right)\\
&= \sup_{\alpha\in\mathcal{N}(1/2, \Theta_k)}\, \alpha^{\top}\left(\hat{\Gamma}_n - \Gamma_n\right) + \frac{1}{2}\sup_{\beta\in\Theta_k}\,\beta^{\top}\left(\hat{\Gamma}_n - \Gamma_n\right).
\end{align*}
Therefore,
\begin{equation}\label{eq:FiniteDnk}
\mathcal{D}_n(k) \le 2\sup_{\theta\in\mathcal{N}(1/2,\Theta_k)}\,\left|\frac{1}{n}\sum_{i=1}^n \left\{\theta^{\top}X_iY_i - \mathbb{E}\left[\theta^{\top}X_iY_i\right]\right\}\right|.
\end{equation}
It is clear that the bound is sharp up to a constant factor. By a similar calculation, it can be shown that
\begin{equation}\label{eq:FiniteRIPnk}
\RIP_n(k) \le 2\sup_{\theta\in\mathcal{N}(1/4,\Theta_k)}\,\left|\frac{1}{n}\sum_{i=1}^n \left\{\left(X_i^{\top}\theta\right)^2 - \mathbb{E}\left[\left(X_i^{\top}\theta\right)^2\right]\right\}\right|.
\end{equation}
See Lemma 2.2 of \cite{Ver12} for a derivation. Importantly independence of the random vectors is not used in any of these calculations. Replacing the continuous supremum by a finite maximum works irrespective of how the random vectors are distributed. 

As an immediate corollary, we get the following rate of convergence results.
\begin{thm}\label{cor:RatesofConvergence}
Define for $k\ge 1$
\[
\Upsilon_{n,k}^{\Gamma} := \sup_{\theta\in\Theta_k}\,\frac{1}{n}\sum_{i=1}^n \mbox{Var}\left(\theta^{\top}X_iY_i\right),\quad\mbox{and}\quad \Upsilon_{n,k}^{\Sigma} := \sup_{\theta\in\Theta_k}\,\frac{1}{n}\sum_{i=1}^n \mbox{Var}\left(\left(\theta^{\top}X_i\right)^2\right).
\]
Then the following rates of convergence hold if $K_{n,p} = O(1)$.
\begin{enumerate}
\item[(a)] Under Assumption~\ref{eq:Marginal},
\begin{align*}
\mathcal{D}_n(k) &= O_p\left(\sqrt{\frac{\Upsilon_{n,k}^{\Gamma}k\log(ep/k)}{n}} + \frac{k^{1/2}(\log n)^{2/\alpha}(k\log(ep/k))^{1/T_1(\alpha/2)}}{n}\right),\\
\mathcal{\RIP}_n(k) &= O_p\left(\sqrt{\frac{\Upsilon_{n,k}^{\Sigma}k\log(ep/k)}{n}} + \frac{k(\log n)^{2/\alpha}(k\log(ep/k))^{1/T_1(\alpha/2)}}{n}\right).
\end{align*}
Here $T_1(\alpha) = \min\{\alpha, 1\}$.
\item[(b)] Under Assumption~\ref{eq:Joint},
\begin{align*}
\mathcal{D}_n(k) &= O_p\left(\sqrt{\frac{\Upsilon_{n,k}^{\Gamma}k\log(ep/k)}{n}} + \frac{(\log n)^{2/\alpha}(k\log(ep/k))^{1/T_1(\alpha/2)}}{n}\right),\\
\mathcal{\RIP}_n(k) &= O_p\left(\sqrt{\frac{\Upsilon_{n,k}^{\Sigma}k\log(ep/k)}{n}} + \frac{(\log n)^{2/\alpha}(k\log(ep/k))^{1/T_1(\alpha/2)}}{n}\right).
\end{align*}
\end{enumerate}
\end{thm}
For simplicity, we provide here only rates of convergence. A more precise tail bound is given in Theorem~\ref{thm:ExactTailIndependence} of Appendix~\ref{AppSec:Independence}.
\begin{rem}\,(Simplified Rates of Convergence)
In most cases the second term in the rate of convergence is of lower order than the first term. Hence, under both the assumptions~\ref{eq:Marginal} and~\ref{eq:Joint}, we get
\[
\mathcal{D}_n(k) = O_p\left(\sqrt{\frac{\Upsilon_{n,k}^{\Gamma}k\log(ep/k)}{n}}\right)\quad\mbox{and}\quad \RIP_n(k) = O_p\left(\sqrt{\frac{\Upsilon_{n,k}^{\Sigma}k\log(ep/k)}{n}}\right).
\]
We believe these to be optimal because if $X$ and $Y$ are independent and jointly Gaussian, then the rates would be $\sqrt{k\log(ep/k)/n}$; see Theorem 3.3 of \cite{Cai12} and Lemma 15 of \cite{Loh12} for related results.
\end{rem}
A direct application of Theorem~\ref{cor:RatesofConvergence} to Theorem~\ref{thm:SimplifiedLinearRep} implies the following uniform linear representation result for linear regression under independence. Recall the notation $\Lambda_n(k)$ from~\eqref{eq:SimplifiedNotation} and also $\hat{\beta}_{n,M}, \beta_{n,M}$ from~\eqref{eq:LeastSquaresEstimatorClosed}, \eqref{eq:LeastSquaresExpected}.
\begin{thm}\label{thm:LinearRepIndependence}
If $(\Lambda_n(k))^{-1} = O(1)$ as $n, p\to\infty$, then the following rates of convergence hold as $n\to\infty.$
\begin{enumerate}
\item[(a)] under Assumption~\ref{eq:Marginal}, 
\begin{align*}
\sup_{M\in\mathcal{M}(k)}&\norm{\hat{\beta}_{n,M} - \beta_{n,M}}_2\\
&= O_p\left(\sqrt{\frac{\Upsilon_{n,k}^{\Gamma}k\log(ep/k)}{n}} + K_{n,p}^2\frac{k(\log n)^{2/\alpha}(k\log(ep/k))^{1/T_1(\alpha/2)}}{n}\right),
\end{align*}
and
\begin{align*}
&\sup_{M\in\mathcal{M}(k)}\norm{\hat{\beta}_{n,M} - \beta_{n,M} - \frac{1}{n}\sum_{i=1}^n \left[\Sigma_n(M)\right]^{-1}X_i(M)\left(Y_i - X_i^{\top}(M)\beta_{n,M}\right)}_2\\
&\quad= O_p\left(\frac{\max\{\Upsilon_{n,k}^{\Gamma}, \Upsilon_{n,k}^{\Sigma}\}k\log(ep/k)}{n} + K_{n,p}^4\frac{k^2(\log n)^{4/\alpha}(k\log(ep/k))^{2/T_1(\alpha/2)}}{n^2}\right).
\end{align*}
\item[(a)] under Assumption~\ref{eq:Joint},
\begin{align*}
\sup_{M\in\mathcal{M}(k)}&\norm{\hat{\beta}_{n,M} - \beta_{n,M}}_2\\
&= O_p\left(\sqrt{\frac{\Upsilon_{n,k}^{\Gamma}k\log(ep/k)}{n}} + K_{n,p}^2\frac{(\log n)^{2/\alpha}(k\log(ep/k))^{1/T_1(\alpha/2)}}{n}\right),
\end{align*}
and
\begin{align*}
&\sup_{M\in\mathcal{M}(k)}\norm{\hat{\beta}_{n,M} - \beta_{n,M} - \frac{1}{n}\sum_{i=1}^n \left[\Sigma_n(M)\right]^{-1}X_i(M)\left(Y_i - X_i^{\top}(M)\beta_{n,M}\right)}_2\\
&\quad= O_p\left(\frac{\max\{\Upsilon_{n,k}^{\Gamma}, \Upsilon_{n,k}^{\Sigma}\}k\log(ep/k)}{n} + K_{n,p}^4\frac{(\log n)^{4/\alpha}(k\log(ep/k))^{2/T_1(\alpha/2)}}{n^2}\right).
\end{align*}
\end{enumerate}
\end{thm}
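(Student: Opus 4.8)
The plan is to combine the deterministic linear representation bound in Theorem~\ref{thm:SimplifiedLinearRep} with the probabilistic rates for the error norms in Corollary~\ref{cor:RatesofConvergence}, treating both the estimation error and the linear representation error in turn. For the first display in each part, I would invoke Theorem~\ref{thm:L2UniformConsis} with $\Sigma_1 = \hat{\Sigma}_n$, $\Sigma_2 = \Sigma_n$, $\Gamma_1 = \hat{\Gamma}_n$, $\Gamma_2 = \Gamma_n$, which yields
\[
\sup_{M\in\mathcal{M}(k)}\norm{\hat{\beta}_{n,M} - \beta_{n,M}}_2 \le \frac{\mathcal{D}_n(k) + \RIP_n(k)\,S_{2,k}(\Sigma_n, \Gamma_n)}{\Lambda_n(k) - \RIP_n(k)}.
\]
The assumption $(\Lambda_n(k))^{-1} = O(1)$ ensures $\Lambda_n(k)$ is bounded below by a constant; since Corollary~\ref{cor:RatesofConvergence} shows $\RIP_n(k) = o_p(1)$ under the stated scalings (the rate tends to zero when $k\log(ep/k)/n \to 0$, which is implicit for the bound to be meaningful), the hypothesis $\RIP_n(k)\le\Lambda_n(k)$ needed by the theorem holds with probability tending to one, and the denominator is bounded away from zero. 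Then $S_{2,k}(\Sigma_n,\Gamma_n) = O(\Lambda_n^{-1/2}(k)) = O(1)$ by the bound~\eqref{eq:S2kBounded} under bounded second moments, so the numerator is $O_p(\mathcal{D}_n(k) + \RIP_n(k))$. Substituting the two rates from Corollary~\ref{cor:RatesofConvergence} and taking the larger of the two error-norm rates gives exactly the claimed $\norm{\cdot}_2$-rates, with the distinction between parts (a) and (b) coming solely from whether the second (higher-order) term carries a factor $k$ or not, mirroring Assumptions~\ref{eq:Marginal} versus~\ref{eq:Joint}.

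For the second display, the linear representation error, I would apply Theorem~\ref{thm:SimplifiedLinearRep} directly, which bounds the representation error by
\[
\frac{\RIP_n(k)}{\Lambda_n(k)}\left(\frac{\mathcal{D}_n(k) + \RIP_n(k)\,S_{2,k}(\Sigma_n, \Gamma_n)}{\Lambda_n(k) - \RIP_n(k)}\right).
\]
This is precisely $\RIP_n(k)/\Lambda_n(k)$ times the bound on the $\norm{\cdot}_2$-estimation error already controlled above. Using $\Lambda_n(k) = \Theta(1)$ and the fact that the bracketed factor is $O_p(\mathcal{D}_n(k) + \RIP_n(k))$, the representation error is $O_p\big(\RIP_n(k)(\mathcal{D}_n(k) + \RIP_n(k))\big)$. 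Plugging in the rates shows the leading contribution is the product of the two $\sqrt{k\log(ep/k)/n}$-type terms, producing the $\max\{\Upsilon_{n,k}^{\Gamma}, \Upsilon_{n,k}^{\Sigma}\}k\log(ep/k)/n$ term, while the cross terms and the square of the higher-order term combine into the $K_{n,p}^4 n^{-2}$ contribution. The book-keeping is slightly tedious because one must expand $(\mathcal{D}_n(k) + \RIP_n(k))\cdot\RIP_n(k)$ into four pieces and verify that each is dominated by one of the two displayed terms, but no new idea is required.

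The only genuine subtlety, and the step I would flag as requiring care, is verifying that the hypothesis $\RIP_n(k)\le\Lambda_n(k)$ of Theorems~\ref{thm:L2UniformConsis} and~\ref{thm:SimplifiedLinearRep} holds with probability approaching one and that the denominator $\Lambda_n(k) - \RIP_n(k)$ can be replaced by $\Lambda_n(k)/2$ (say) up to negligible loss; this is exactly where $(\Lambda_n(k))^{-1} = O(1)$ combined with $\RIP_n(k) = o_p(1)$ enters, so the events on which the deterministic bounds are valid have probability tending to one and the $O_p$ statements are legitimate. Everything else is a mechanical substitution of the rates from Corollary~\ref{cor:RatesofConvergence}, keeping track of the factor $k$ that separates the marginal assumption~\ref{eq:Marginal} from the joint assumption~\ref{eq:Joint}.
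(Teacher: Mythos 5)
Your proposal is correct and follows exactly the paper's own route: the paper proves this theorem precisely by applying the stochastic rates for $\RIP_n(k)$ and $\mathcal{D}_n(k)$ (Theorem~\ref{thm:ExactTailIndependence}/Corollary~\ref{cor:RatesofConvergence}) to the deterministic bounds of Theorems~\ref{thm:L2UniformConsis} and~\ref{thm:SimplifiedLinearRep}, using the bound~\eqref{eq:S2kBounded} on $S_{2,k}$ and the hypothesis $(\Lambda_n(k))^{-1}=O(1)$ to make the denominators harmless. Your handling of the event $\{\RIP_n(k)\le\Lambda_n(k)\}$ holding with probability tending to one, and the expansion of $\RIP_n(k)\bigl(\mathcal{D}_n(k)+\RIP_n(k)\bigr)$ into the two displayed terms, matches the intended argument.
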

\begin{rem}\,(Simplified Rates of Convergence)
The result can be made much more precise by giving the exact tail bound for all the quantities using the exact result of Theorem~\ref{thm:ExactTailIndependence}. We leave the details to the reader. From Theorem~\ref{thm:LinearRepIndependence}, it is clear that if $k\log(ep/k)^{2/T_1(\alpha)} = o(n)$, then the least squares linear regression estimator is uniformly consistent at the rate of $\sqrt{k\log(ep/k)/n}$, which is well-known to be the minimax optimal rate of convergence for high-dimensional linear regression estimators under a true linear model with a sparse parameter vector. We conjecture these rates to be optimal. However, we have not derived minimax rates for this problem. Also, our results are uniform over all probability distributions of the random vectors $(X_i, Y_i)$ satisfying either of the Assumptions~\ref{eq:Marginal} or~\ref{eq:Joint} with $K_{n,p} \le K$ for some fixed constant $K < \infty$.
\end{rem}
\begin{rem}\,(Fixed Covariates)
The results in this section {do} not require any special properties of the data generating distribution such as linearity and {Gaussianity}. The results only require independence of random vectors with weak exponential tails, but it is \emph{not} assumed that $(X_i, Y_i)$ have identical distributions for $1 \le i\le n$. 

It is worth mentioning a special case of our setting that is popular in the classical as well as modern linear regression literature: the setting of fixed covariates. As explained in \cite{Buja14}, this assumption has its roots in the ancillarity theory assuming the truth of a linear model. If the covariates are non-stochastic, then
\[
\hat{\Sigma}_n = \frac{1}{n}\sum_{i=1}^n X_iX_i^{\top} = \frac{1}{n}\sum_{i=1}^n \mathbb{E}\left[X_iX_i^{\top}\right] = \Sigma_n,
\]
so that $\RIP_n(k) = 0$ for all $n$ and $k$. Therefore, the bounds in Theorem~\ref{thm:SimplifiedLinearRep} become trivial in the sense that the uniform linear representation error becomes zero. The result applies because assumption~\ref{eq:Marginal} holds with
\[
K_{n,p} = \max\{\max_{1\le i\le n}\norm{X_i}_{\infty}, \max_{1\le i\le n}\norm{Y_i}_{\psi_{\alpha}}\}.
\]
Also, note from Theorem~\ref{thm:L2UniformConsis} that
\[
\sup_{M\in\mathcal{M}(k)}\norm{\hat{\beta}_{n,M} - \beta_{n,M}}_2 \le \frac{\mathcal{D}_n(k)}{\Lambda_n(k)},
\]
which again leads to the same rate of convergence $\sqrt{k\log(ep/k)/n}$. An interesting observation here is that there is no dependence on the strength of linear association $S_{2,k}(\Sigma_n, \Gamma_n)$ defined in Equation~\eqref{eq:StrengthS} in the case of fixed covariates.
\end{rem}
\begin{rem}{\,(Are the rates optimal?)} 
We believe the rates for the uniform linear representation error to be optimal; cf. Theorem 5.1 of~\cite{javanmard2018debiasing}. An intuitive reason is as follows. Any symmetric function of independent random variables can be expanded as a sum of degenerate $U$-statistics of increasing order according to the Hoeffding decomposition; see \cite{vanZwet84}. That is,
\[
f(W_1, \ldots, W_n) = \mathcal{U}_{1n} + \mathcal{U}_{2n} + \ldots + \mathcal{U}_{nn},
\]
for any symmetric function $f$ of independent random variables $W_1, \ldots, W_n$. Here $\mathcal{U}_{in}$ represents an $i$-th order degenerate $U$-statistics.

For the statistic $\hat{\beta}_{n,M} - \beta_{n,M}$, the first order term $\mathcal{U}_{1n}$ in the decomposition is given by
\[
\mathcal{U}_{1n}^{(M)} = \frac{1}{n}\sum_{i=1}^n \left[\Sigma_n(M)\right]^{-1}X_i(M)\left(Y_i - X_i^{\top}(M)\beta_{n,M}\right).
\]
Hence the difference $\hat{\beta}_{n,M} - \beta_{n,M} - \mathcal{U}_{1n}^{(M)}$ is of the same order as the second order $U$-statistics $\mathcal{U}_{2n}^{(M)}$ next in the decomposition. It is well-known that under mild conditions, a second order degenerate $U$-statistics is of order $1/n$; see \citet[Chapter 5]{Serfling80} for precise results. Therefore, bounding the supremum of the $\norm{\cdot}_2$-norm in the uniform linear representation by
\[
2\max_{|M| \le k}\,\max_{\theta\in\mathbb{R}^{|M|}, \norm{\theta}_2 \le 1}\,\theta^{\top}\left(\hat{\beta}_{n,M} - \beta_{n,M} - \mathcal{U}_{1n}^{(M)}\right)\quad\approx\quad 2\max_{|M| \le k}\,\max_{\theta\in\mathbb{R}^{|M|}, \norm{\theta}_2 \le 1}\,\theta^{\top}\mathcal{U}_{2n}^{(M)},
\]
we see that this is a maximum of at most $(5ep/k)^k$ many degenerate $U$-statistics of order 2, which is expected to be of order $(\log(5ep/k)^k)/n = (k\log(5ep/k))/n$. See \cite{DeLaPena99} for results about suprema of degenerate $U$-statistics.
\end{rem}
\begin{rem}\,(Using covariance matrices instead of Gram matrices)\label{rem:GramCov}
The quantities $\Upsilon_{n,k}^{\Gamma}$ and $\Upsilon_{n,k}^{\Sigma}$ play an important role in determining the exact rates of convergence in Theorem~\ref{thm:LinearRepIndependence}. Under Assumption~\ref{eq:Joint}, it can be easily shown that these quantities are of the same order as $K_{n,p}.$ In cases where the dimension grows, Assumption~\ref{eq:Joint} cannot be justified with non-zero mean of $X_i$'s unless $\norm{\mathbb{E}[X_i]}_2 = O(1)$. Under Assumption~\ref{eq:Marginal}, $\Upsilon_{n,k}^{\Gamma}$ and $\Upsilon_{n,k}^{\Sigma}$ can grow with $k$, and it is hard to pinpoint their growth rate. 
In many cases, it is reasonable to assume a bounded operator norm of the covariance matrix instead of the second moment (or Gram) matrix. For this reason, it is of interest to analyze the least squares estimators with centered random vectors. In this case $\hat{\Sigma}_n$ and $\hat{\Gamma}_n$ should be replaced by
\[
\hat{\Sigma}_n^* := \frac{1}{n}\sum_{i=1}^n \left(X_i - \bar{X}\right)\left(X_i - \bar{X}\right)^{\top}\quad\mbox{and}\quad \hat{\Gamma}_n^* := \frac{1}{n}\sum_{i=1}^n \left(X_i - \bar{X}\right)\left(Y_i - \bar{Y}\right).
\]
Here $\bar{X}$ and $\bar{Y}$ represent the sample means of the covariates and the response, respectively. Without the assumption of equality of $\mathbb{E}[X_i]$ for $1\le i\le n$, $\hat{\Sigma}_n^*$ is not consistent for the covariance matrix of $\bar{X}$. Define
\[
\bar{\mu}_n^{X} := \frac{1}{n}\sum_{i=1}^n \mathbb{E}\left[X_i\right]\quad\mbox{and}\quad \bar{\mu}_n^Y := \frac{1}{n}\sum_{i=1}^n \mathbb{E}\left[Y_i\right].
\]
It is easy to prove that
\begin{align*}
\hat{\Sigma}_n^* &= \frac{1}{n}\sum_{i=1}^n \left(X_i - \bar{\mu}_n^X\right)\left(X_i - \bar{\mu}_n^X\right)^{\top} - \left(\bar{X}_n - \bar{\mu}_n^X\right)\left(\bar{X}_n - \bar{\mu}_n^X\right)^{\top}\\
&= \tilde{\Sigma}_n - \left(\bar{X}_n - \bar{\mu}_n^X\right)\left(\bar{X}_n - \bar{\mu}_n^X\right)^{\top},
\end{align*}
where
\[
\tilde{\Sigma}_n := \frac{1}{n}\sum_{i=1}^n \left(X_i - \bar{\mu}_n^X\right)\left(X_i - \bar{\mu}_n^X\right)^{\top}.
\]
Similarly, we get
\[
\hat{\Gamma}_n^* = \tilde{\Gamma}_n - \left(\bar{X} - \bar{\mu}_n^X\right)\left(\bar{Y} - \bar{\mu}_n^Y\right),\quad\mbox{where}\quad \tilde{\Gamma}_n := \frac{1}{n}\sum_{i=1}^n \left(X_i - \bar{\mu}_n^X\right)\left(Y_i - \bar{\mu}_n^Y\right).
\]
Note that $\tilde{\Gamma}_n$ and $\tilde{\Sigma}_n$ are averages of independent random vectors and random matrices and so the theory before applies with the target vector and matrix given by
\[
\Gamma_n^* = \frac{1}{n}\sum_{i=1}^n \mathbb{E}\left[\left(X_i - \bar{\mu}_n^X\right)\left(Y_i - \bar{\mu}_n^Y\right)\right]\quad\mbox{and}\quad \Sigma_n^* = \frac{1}{n}\sum_{i=1}^n \mathbb{E}\left[\left(X_i - \bar{\mu}_n^X\right)\left(X_i - \bar{\mu}_n^X\right)^{\top}\right].
\]
It is important to recognize that Theorem~\ref{thm:SimplifiedLinearRep} is not directly applicable since the forms of $\hat{\Sigma}_n^*$ and $\hat{\Gamma}_n^*$ do not match the structure required. One has to apply Theorem~\ref{thm:UniformLinearRep} to obtain
\begin{align*}
\sup_{M\in\mathcal{M}(k)}&\norm{\hat{\beta}_{M}^* - \beta_{M}^* - \frac{1}{n}\sum_{i=1}^n \left[\Sigma_n^*(M)\right]^{-1}\left(X_i - \bar{\mu}_n^X\right)(M)\left\{Y_i - \bar{\mu}_n^Y - (X_i(M) - \bar{\mu}_n^X(M))^{\top}\beta_{M}^*\right\}}_2\\
&\le \frac{\mathcal{D}(k, \bar{X} - \bar{\mu}_n^X)\left[|\bar{Y} - \bar{\mu}_n^Y| + \mathcal{D}(k, \bar{X} - \bar{\mu}_n^X)S_{2,k}^*\right]}{\Lambda_n^*(k)} + \frac{\RIP_n^*(k)}{\Lambda_n^*(k)}\times\frac{\mathcal{D}_n^*(k) + \RIP_n^*(k)S_{2,k}^*}{\Lambda_n^*(k) - \RIP_n^*(k)},
\end{align*}
where
\[
\hat{\beta}_{M}^* := \beta_M(\hat{\Sigma}_n^*, \hat{\Gamma}_n^*),\quad \beta_{M}^* := \beta_M(\Sigma_n^*, \Gamma_n^*),\quad S_{2,k}^* := S_{2,k}(\Sigma_n^*,\Gamma_n^*).
\]
and
\[
\RIP_n^*(k) := \RIP(k, \hat{\Sigma}_n^* - \Sigma_n^*),\quad\mathcal{D}_n^*(k) := \mathcal{D}(k,\hat{\Gamma}_n^* - \Gamma_n^*),\quad\mbox{and}\quad \Lambda_n^*(k) := \Lambda(k; \Sigma_n^*).
\]
From the calculations presented above, it follows that
\begin{align*}
\RIP_n^*(k) &\le \RIP(k, \tilde{\Sigma}_n - \Sigma_n^*) + \mathcal{D}^2(k, \bar{X} - \bar{\mu}_n^X),\\
\mathcal{D}_n^*(k) &\le \mathcal{D}(k, \tilde{\Gamma}_n - \Gamma_n^*) + \mathcal{D}(k, \bar{X} - \bar{\mu}_n^X)\left|\bar{Y} - \mu_n^Y\right|.
\end{align*}
The right hand side terms above can be controlled using Theorem~\ref{thm:IndependenceTailBound}. Thus, the linear representation changes when using the sample covariance matrix. See Section 4.1.1 of~\cite{KuchAbhi17} for more details.
\end{rem}
\section{Rates for Functionally Dependent Observations}\label{sec:Dependent}
In this section, we extend all the results presented in the previous section to dependent data. The dependence structure on the observations we use is based on a notion developed by \cite{Wu05}. It is possible to derive these results also under the classical dependence notions like $\alpha$-,$\beta$-,$\rho$- mixing, however, verifying the mixing assumptions can often be hard and many well-known processes do not satisfy them. See \cite{Wu05} for more details. It has also been shown that many econometric time series can be studied under the notion of functional dependence; see \cite{Wu10}, \cite{Liu13} and \cite{WuWu16}. For a study of dependent processes under a similar framework called $L_p$-approximability, see \cite{Potscher97}.

The dependence notion of \cite{Wu05} is written in terms of an input-output process that is easy to analyze in many settings. The process is defined as follows. Let $\{\varepsilon_i, \varepsilon_i':\,i\in\mathbb{Z}\}$ denote a sequence of independent and identically distributed random variables on some measurable space $(\mathcal{E}, \mathcal{B})$. 
Define the $q$-dimensional process $W_i$ with causal representation as
\begin{equation}\label{eq:CausalProcessVector}
W_i = G_i(\ldots,\varepsilon_{i-1}, \varepsilon_i)\in\mathbb{R}^q,
\end{equation}
for some vector-valued function $G_i(\cdot) = (g_{i1}(\cdot), \ldots, g_{iq}(\cdot))$. 
By Wold representation theorem for stationary processes, this causal representation holds in many cases. Define the non-decreasing filtration
\begin{equation}\label{eq:Filtration}
\mathcal{F}_i := \sigma\left(\ldots, \varepsilon_{i-1}, \varepsilon_i\right).
\end{equation}
Using this filtration, we also use the notation $W_i = G_i(\mathcal{F}_i)$. To measure the strength of dependence, define for $r\ge 1$ and $1\le j\le q$, the \textbf{functional dependence measure}
\begin{equation}\label{eq:FunctionalDepMeasure}
\delta_{s,r,j} := \max_{1\le i\le n}\,\norm{W_i(j) - W_{i,s}(j)}_r,\quad\mbox{and}\quad \Delta_{m,r,j} := \sum_{s = m}^{\infty} \delta_{s,r,j},
\end{equation}
where 
\begin{equation}\label{eq:ZikDef}
W_{i,s}(j) := g_{ij}(\mathcal{F}_{i,i-s})\quad\mbox{with}\quad \mathcal{F}_{i,i-s} := \sigma\left(\ldots,\varepsilon_{i-s-1}, \varepsilon_{i-s}', \varepsilon_{i-s+1}, \ldots, \varepsilon_{i-1}, \varepsilon_i\right).
\end{equation}
The $\sigma$-field $\mathcal{F}_{i,i-s}$ represents a coupled version of $\mathcal{F}_i$. The quantity $\delta_{s,r,j}$ measures the dependence using the distance in terms of $\norm{\cdot}_r$-norm between $g_{ij}(\mathcal{F}_i)$ and $g_{ij}(\mathcal{F}_{i,i-s}).$ In other words, it is quantifying the impact of changing $\varepsilon_{i-s}$ on $g_{ij}(\mathcal{F}_i)$; see Definition 1 of \cite{Wu05}. The \textbf{dependence adjusted norm} for the $j$-th coordinate is given by
\[
\norm{\{W(j)\}}_{r,\nu} := \sup_{m\ge 0}\, (m + 1)^{\nu}\Delta_{m,r,j},\quad \nu \ge 0. 
\]
To summarize these measures for the vector-valued process, define
\[
\norm{\{W\}}_{r,\nu} := \max_{1\le j\le q}\,\norm{\{W(j)\}}_{r,\nu}\quad\mbox{and}\quad \norm{\{W\}}_{\psi_{\alpha}, \nu} := \sup_{r\ge 2}\,r^{-1/\alpha}\norm{\{W\}}_{r,\nu}.
\]
\begin{rem}\,(Independent Sequences)\label{rem:Indep}
Any notion of dependence should at least include independent random variables. It might be helpful to understand how independent random variables fits into this framework of dependence. For independent random vectors $W_i$, the causal representation reduces to
\[
W_i = G_i(\ldots, \varepsilon_{i-1}, \varepsilon_i) = G_i(\varepsilon_i)\in\mathbb{R}^{q}.
\]
It is not a function of any of the previous $\varepsilon_j, j < i$. This implies by the definition~\eqref{eq:ZikDef}  that 
\[
W_{i,s} = \begin{cases}
G_i(\varepsilon_i) = W_i,&\mbox{if }s\ge 1,\\
G_i(\varepsilon_{i}') =: W_i',&\mbox{if }s = 0.
\end{cases}
\] 
Here $W_i'$ represents an independent and identically distributed copy of $W_i$. Hence, 
\[
\delta_{s,r,j} = \begin{cases}
0, &\mbox{if }s\ge 1,\\
\norm{W_i(j) - W_i'(j)}_r \le 2\norm{W_i(j)}_r,&\mbox{if }s = 0.
\end{cases}
\]
It is now clear that for any $\nu > 0$,
\[
\norm{\{W\}}_{r, \nu} = \sup_{m\ge 0}\,(m + 1)^{\nu}\Delta_{m,r} = \Delta_{0, r} \le 2\max_{1\le j\le q}\norm{W_i(j)}_r.
\]
Hence, if the independent sequence $W_i$ satisfies assumption~\ref{eq:Marginal}, then $\norm{\{W\}}_{\psi_{\alpha}, \nu} < \infty$ for all $\nu > 0$, in particular for $\nu = \infty$. Therefore, independence corresponds to $\nu = \infty$. As $\nu$ decreases to zero, the random vectors become more and more dependent.
\end{rem}
All our results in this section are based on the following tail bound for the maximum of averages of functionally dependent variables which is an extension of Theorem 2 of \cite{WuWu16}. This result is similar to Theorem~\ref{thm:IndependenceTailBound}. For this result, define
\begin{equation}\label{eq:ShiftAndTrucMainPaper}
s(\lambda) := (1/2 + 1/\lambda)^{-1},\quad\mbox{and}\quad T_1(\lambda) := \min\{\lambda, 1\}\quad\mbox{for all}\quad \lambda > 0. 
\end{equation}
\begin{thm}\label{thm:DependentSums}
Suppose $Z_1, \ldots, Z_n$ are random vectors in $\mathbb{R}^q$ with a causal representation such as \eqref{eq:CausalProcessVector} with mean zero. Assume that for some $\alpha > 0$, and $\nu > 0$,
\begin{equation}\label{eq:AssumptionDependenceMain}
\norm{\{Z\}}_{\psi_{\alpha}, \nu} = \sup_{r\ge 2}\,\sup_{m\ge 0}\, r^{-1/\alpha}(m+ 1)^{\nu}\Delta_{m,r} \le K_{n,q}.
\end{equation}
Define
\[
\Omega_n(\nu) := 2^{\nu}\times\begin{cases}
5/(\nu - 1/2)^3, &\mbox{if }\nu > 1/2,\\
2(\log_2n)^{5/2}, &\mbox{if }\nu = 1/2,\\
5(2n)^{(1/2 - \nu)}/(1/2 - \nu)^3,&\mbox{if }\nu < 1/2.
\end{cases}
\]
Then for all $t\ge 0$, with probability at least $1 - 8e^{-t}$,
\begin{equation}\label{eq:TailBoundMain}
\begin{split}
\max_{1\le j\le q}\left|\sum_{i=1}^n Z_i(j)\right| &\le e\sqrt{n}\norm{\{Z\}}_{2,\nu}B_{\nu}\sqrt{t + \log(q+1)}\\ &\qquad+ C_{\alpha}K_{n,q}(\log n)^{1/s(\alpha)}\Omega_n(\nu)(t + \log(q + 1))^{1/T_1(s(\alpha))}.
\end{split}
\end{equation}
Here $B_{\nu}$ and $C_{\alpha}$ are constants depending only on $\nu$ and $\alpha$, respectively.
\end{thm}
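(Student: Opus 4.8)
The plan is to reduce the vector maximum to a single coordinate by a union bound and to control each scalar sum $S_n(j) := \sum_{i=1}^n Z_i(j)$ by a martingale decomposition along the filtration $\{\mathcal{F}_i\}$ of \eqref{eq:Filtration}, following the scalar argument of \cite{WuWu16} (and mirroring the structure of the independent case in Theorem~\ref{thm:IndependenceTailBound}), but now carried out for a general sub-Weibull order $\alpha$ and with an extra coordinate index. Writing $\mathcal{P}_{\ell}(\cdot) := \mathbb{E}[\,\cdot\mid\mathcal{F}_{\ell}] - \mathbb{E}[\,\cdot\mid\mathcal{F}_{\ell-1}]$ for the projection onto the martingale differences and using $\mathbb{E}[Z_i(j)]=0$, one has the telescoping identity $Z_i(j) = \sum_{m=0}^{\infty}\mathcal{P}_{i-m}Z_i(j)$, hence
\[
S_n(j) = \sum_{m=0}^{\infty} M_{n,m}(j),\qquad M_{n,m}(j) := \sum_{i=1}^n \mathcal{P}_{i-m}Z_i(j).
\]
For each fixed lag $m$, the array $\{\mathcal{P}_{i-m}Z_i(j)\}_i$ is a martingale-difference sequence with respect to $\{\mathcal{F}_{i-m}\}$, and the coupling characterization of the functional dependence measure \eqref{eq:FunctionalDepMeasure} supplies the key increment bound $\norm{\mathcal{P}_{i-m}Z_i(j)}_r \le \delta_{m,r,j}$ for every $r \ge 2$.

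Next I would apply a martingale moment inequality (Burkholder--Rosenthal, together with a Freedman-type truncation argument) to each $M_{n,m}(j)$. This yields, for every $r \ge 2$, a two-part bound of the schematic form
\[
\norm{M_{n,m}(j)}_r \;\lesssim\; \sqrt{r}\,\sqrt{n}\,\delta_{m,2,j} \;+\; (m+1)^{1/2}\,r^{1/s(\alpha)}(\log n)^{1/s(\alpha)}\,\delta_{m,r,j},
\]
where the first term is the square-function (variance) contribution responsible for the Gaussian behaviour and the second is the large-jump contribution. The truncation level needed to tame the $\psi_{\alpha}$ tails of the increments is what forces the $(\log n)$ factor, and the exponent $1/s(\alpha)=1/2+1/\alpha$ records the superposition of the $\sqrt{r}$ martingale growth with the $r^{1/\alpha}$ growth of $\psi_{\alpha}$-moments; the $(m+1)^{1/2}$ weight reflects the cumulative effect of the lag-$m$ projections over blocks of length of order $m$ in the maximal/partial-sum step.

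Summing over lags by Minkowski, $\norm{S_n(j)}_r \le \sum_{m\ge0}\norm{M_{n,m}(j)}_r$, the Gaussian part contributes only $\sqrt{rn}\sum_{m\ge0}\delta_{m,2,j}=\sqrt{rn}\,\Delta_{0,2,j}\le \sqrt{rn}\,\norm{\{Z(j)\}}_{2,\nu}$, which is why no $\Omega_n$ factor attaches to it. The large-jump part, however, produces the series $\sum_{m\ge0}(m+1)^{1/2}\delta_{m,r,j}$; inserting $\Delta_{m,r,j}\le (m+1)^{-\nu}\norm{\{Z(j)\}}_{r,\nu}$ (so $\delta_{m,r,j}$ decays like $(m+1)^{-\nu-1}$) and summing by parts reduces this to $\sum_{m=0}^{n}(m+1)^{-1/2-\nu}$, which converges when $\nu>1/2$, grows logarithmically when $\nu=1/2$, and is of order $n^{1/2-\nu}$ when $\nu<1/2$ (the cap at $m\sim n$ supplying the $n$-dependence). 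This is exactly the three-regime factor $\Omega_n(\nu)$, and I expect this bookkeeping to be the most delicate part.

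Finally I would convert the resulting moment bound, of the form $\norm{S_n(j)}_r \lesssim \sqrt{r}\,\sqrt{n}\,\norm{\{Z(j)\}}_{2,\nu}B_{\nu} + r^{1/s(\alpha)}(\log n)^{1/s(\alpha)}\Omega_n(\nu)K_{n,q}$, into a tail bound via Markov's inequality optimized over $r$: a standard moment-to-tail lemma turns the $\sqrt{r}$ term into the sub-Gaussian deviation $\sqrt{t}$ and the $r^{1/s(\alpha)}$ term into $t^{1/T_1(s(\alpha))}$, which is linear in $t$ when $s(\alpha)\ge 1$ (i.e.\ $\alpha\ge 2$) and heavier otherwise, explaining the appearance of $T_1$. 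A union bound over the $q$ coordinates replaces $t$ by $t+\log(q+1)$ at the cost of only a constant factor in the failure probability, giving \eqref{eq:TailBoundMain}. The main obstacle is carrying out this moment-to-tail conversion uniformly so that the Gaussian and sub-Weibull contributions separate cleanly, while simultaneously keeping every constant dependent only on $\nu$ and $\alpha$; this forces care in the choice of truncation level and in tracking the Burkholder--Rosenthal and summation-by-parts constants through the two deviation regimes.
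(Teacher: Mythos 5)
Your overall skeleton --- reducing the vector maximum to a scalar sum via a union bound (so that $t$ becomes $t+\log(q+1)$ at the cost of a constant in the failure probability), proving an $r$-th moment bound with a Gaussian part plus a sub-Weibull part, and converting via Markov's inequality optimized over $r$ --- is the same as the paper's, and your accounting of how a sum of order $\sum_{m\le n}(m+1)^{-1/2-\nu}$ produces the three regimes of $\Omega_n(\nu)$ is the right intuition. The genuine gap is the per-lag moment bound you assert,
\[
\norm{M_{n,m}(j)}_r \;\lesssim\; \sqrt{rn}\,\delta_{m,2,j} \;+\; (m+1)^{1/2}\,r^{1/s(\alpha)}(\log n)^{1/s(\alpha)}\,\delta_{m,r,j},
\]
which does not follow from Burkholder--Rosenthal or a Freedman-type truncation applied to the lag-$m$ martingale $M_{n,m}(j)=\sum_{i}\mathcal{P}_{i-m}Z_i(j)$. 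Those inequalities control a martingale through its \emph{predictable} quadratic variation $\sum_i \mathbb{E}\left[(\mathcal{P}_{i-m}Z_i(j))^2\mid\mathcal{F}_{i-m-1}\right]$, a random quantity; without further structure the only deterministic bound available is through $\norm{\mathcal{P}_{i-m}Z_i(j)}_r\le\delta_{m,r,j}$, so the honest conclusion is $\norm{M_{n,m}(j)}_r\le\sqrt{(r-1)n}\,\delta_{m,r,j}$ --- exactly what Lemma~\ref{lem:MartingaleBounds} (Rio) gives, and how the paper bounds its remainder term $\mathbf{II}$. Summing that over lags yields a leading term $\sqrt{rn}\,\Delta_{0,r,j}\lesssim r^{1/2+1/\alpha}\sqrt{n}\,K_{n,q}$, i.e.\ the sub-Gaussian term $e\sqrt{n}\norm{\{Z\}}_{2,\nu}B_\nu\sqrt{t+\log(q+1)}$ is lost and the entire tail degrades to the $t^{1/T_1(s(\alpha))}$ regime. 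Since exhibiting a Gaussian component governed by the \emph{second-moment} norm $\norm{\{Z\}}_{2,\nu}$ is precisely the paper's stated improvement over Wu--Wu, this is not a cosmetic loss. Likewise, no per-lag martingale inequality produces your $(m+1)^{1/2}$ weight; it is not a consequence of the tools you cite, but is reverse-engineered from the desired conclusion.

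The paper circumvents both obstacles with a different decomposition. It approximates $S_n$ by the $m$-dependent sums $S_{n,m}=\sum_i\mathbb{E}[Z_i\mid\varepsilon_{i-m},\ldots,\varepsilon_i]$ at \emph{dyadic} lags $\xi_\ell=2^\ell$, writing $S_n = S_{n,0}+(S_n-S_{n,n})+\sum_{\ell=1}^L(S_{n,\xi_\ell}-S_{n,\xi_{\ell-1}})$. The level-$\ell$ increments are $\xi_\ell$-dependent, so splitting the index set into alternating blocks of length $\xi_\ell$ turns the level-$\ell$ sum into two sums of \emph{independent} blocks, to which the independent-data bound (Theorem~\ref{thm:IndependenceTailBound}, Adamczak type) applies; its Gaussian part involves \emph{unconditional} block variances, which is where $\norm{\{Z\}}_{2,\nu}$ enters, and the block length is what produces the $\xi_\ell^{1/2-\nu}$ factors (the role played by your $(m+1)^{1/2}(m+1)^{-\nu}$), which are then summed over dyadic levels through a weighted H\"older inequality and Lemma~\ref{lem:SimpleCalculation} to give $\Omega_n(\nu)$. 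Rio's martingale inequality is used only \emph{inside} blocks, to compute their $\norm{\cdot}_2$- and $\psi$-norms, not as the main engine. If you want to salvage a lag-by-lag scheme, you would need a Rosenthal-type inequality whose variance proxy is deterministic and of second-moment type; manufacturing such a proxy is exactly what the blocking-into-independent-blocks step accomplishes.
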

\begin{proof}
The proof follows from Theorem~\ref{thm:DependentSumsApp} proved in Appendix~\ref{AppSec:Dependence} and a union bound.
\end{proof}

Getting back to the application of uniform-in-submodel results for linear regression, we assume that the random vectors are elements of a causal process with exponential tails. Formally, suppose $(X_1, Y_1), \ldots, (X_n, Y_n)$ 
are random vectors in $\mathbb{R}^p\times\mathbb{R}$ satisfying the following assumption:
\begin{description}
\item[\namedlabel{eq:Dependent}{(DEP)}] Assume that there exist $n$ vector-valued functions $G_i$ and an iid sequence $\{\varepsilon_i:\,i\in\mathbb{Z}\}$ such that
\[
W_i := (X_i, Y_i) = G_i(\ldots, \varepsilon_{i-1}, \varepsilon_i)\in\mathbb{R}^{p+1}.
\]
Also, for some $\nu, \alpha > 0$,
\[
\norm{\{W\}}_{\psi_{\alpha}, \nu} \le K_{n,p}\quad\mbox{and}\quad \max_{1\le i\le n}\max_{1\le j\le p+1}\,|\mathbb{E}\left[W_i(j)\right]| \le K_{n,p}.
\]
\end{description}
Based on Remark~\ref{rem:Indep}, Assumption~\ref{eq:Dependent} is equivalent to Assumption~\ref{eq:Marginal} for independent data. For independent random variables, the second part of Assumption~\ref{eq:Dependent} about the expectations follows from the $\psi_{\alpha}$-bound assumption. The reason for this expectation bound in the assumption here is that the functional dependence measure $\delta_{s,r}$ does not have any information about the expectation since
\[
\norm{W_i(j) - W_{i,s}(j)}_r = \norm{\left(W_i(j) - \mathbb{E}\left[W_i(j)\right]\right) - \left(W_{i,s}(j) - \mathbb{E}\left[W_{i,s}(j)\right]\right)}_r.
\]
The coupled random variable $W_{i,s}$ has the same expectation as $W_i$. Since the quantities we need to bound involve products of random variables, such a bound on the expectations is needed for our analysis.

We are now ready to state the final results of this section. Only results similar to Theorems~\ref{thm:ExactTailIndependence} and~\ref{thm:LinearRepIndependence} are stated. Also, we only state the results under marginal moment assumption and the version with joint moment assumption can easily be derived based on the proof. These results are based on Theorem~\ref{thm:DependentSums}. Recall from inequalities~\eqref{eq:FiniteDnk} and~\eqref{eq:FiniteRIPnk} that
\begin{align*}
\mathcal{D}_n(k) &\le 2\sup_{\theta\in\mathcal{N}(1/2,\Theta_k)}\,\left|\frac{1}{n}\sum_{i=1}^n \left\{\theta^{\top}X_iY_i - \mathbb{E}\left[\theta^{\top}X_iY_i\right]\right\}\right|,\\
\RIP_n(k) &\le 2\sup_{\theta\in\mathcal{N}(1/4,\Theta_k)}\,\left|\frac{1}{n}\sum_{i=1}^n \left(X_i^{\top}\theta\right)^2 - \mathbb{E}\left[\left(X_i^{\top}\theta\right)^2\right]\right|.
\end{align*}
Note that these quantities involve linear combinations $(\theta^{\top}X_i)$ and products $(\theta^{\top}X_iY_i)$ of functionally dependent random variables. It is clear that all linear combinations and products of functionally dependent random variables have a causal representation since if $W_i^{(1)} := h_i^{(1)}(\mathcal{F}_i)$ and $W_i^{(2)} := h_i^{(2)}(\mathcal{F}_i)$, then
\begin{align*}
\alpha W_i^{(1)} + \beta W_i^{(2)} = \alpha h_i^{(1)}(\mathcal{F}_i) + \beta h_i^{(2)}(\mathcal{F}_i)\quad&\mbox{and}\quad W_i^{(1)}W_i^{(2)} = h_i^{(1)}(\mathcal{F}_i)h_i^{(2)}(\mathcal{F}_i).
\end{align*}
Thus, they can be studied under the same framework of dependence. In Lemma~\ref{lem:ProductProcess}, we bound the functional dependence measure of such linear combination and product processes.

For the main results of this section, define for $\theta\in\Theta_k$ (see~\eqref{eq:kSparseSubsetDef})
\begin{align*}
\vartheta_4^{(\Gamma)}(\theta) &:= \left(\norm{\{\theta^{\top}X\}}_{4,0} + \max_{1\le i\le n}\left|\mathbb{E}\left[\theta^{\top}X_i\right]\right|\right)\norm{\{Y\}}_{4,\nu}\\ &\qquad+ \left(\norm{\{Y\}}_{4,0} + \max_{1\le i\le n}\left|\mathbb{E}\left[Y_i\right]\right|\right)\norm{\{\theta^{\top}X\}}_{4,\nu},\\
\vartheta_4^{(\Sigma)}(\theta) &:= 2\left(\norm{\{\theta^{\top}X\}}_{4,0} + \max_{1\le i\le n}\left|\mathbb{E}\left[\theta^{\top}X_i\right]\right|\right)\norm{\{\theta^{\top}X\}}_{4,\nu}.
\end{align*}
\begin{thm}\label{thm:ExactTailDependence}
Fix $n, k\ge 1$ and let $t\ge 0$ be any real number. Define
\begin{align*}
\sqrt{\Upsilon_{n,k}^{\Gamma}} &:= \sup_{\theta\in\Theta_k}\,\vartheta_4^{(\Gamma)}(\theta),\quad\mbox{and}\quad\sqrt{\Upsilon_{n,k}^{\Sigma}} := \sup_{\theta\in\Theta_k}\,\vartheta_4^{(\Sigma)}(\theta).
\end{align*}
Then under Assumption~\ref{eq:Dependent}, with probability at least $1 - 16e^{-t}$, the following inequalities hold simultaneously,
\begin{align*}
\mathcal{D}_{n}(k) &\le 2eB_{\nu}\sqrt{\frac{\Upsilon_{n,k}^{\Gamma}(t + k\log(3ep/k))}{n}}\\ 
&\qquad+ C_{\alpha}K_{n,p}^2\frac{k^{1/2}(\log n)^{1/s(\alpha/2)}\Omega_n(\nu)(t + k\log(3ep/k))^{1/{T_1(s(\alpha/2))}}}{n},
\end{align*}
and
\begin{align*}
\mathcal{\RIP}_n(k) &\le 2eB_{\nu}\sqrt{\frac{\Upsilon_{n,k}^{\Gamma}(t + k\log(5ep/k))}{n}}\\ 
&\qquad+ C_{\alpha}K_{n,p}^2\frac{k(\log n)^{1/s(\alpha/2)}\Omega_n(\nu)(t + k\log(5ep/k))^{1/{T_1(s(\alpha/2))}}}{n}.
\end{align*}
Here $T_1(\alpha)$ and $s(\alpha)$ are functions given in~\eqref{eq:ShiftAndTrucMainPaper} and $B_{\nu}, C_{\alpha}$ are constants depending only on $\nu$ and $\alpha$, respectively.
\end{thm}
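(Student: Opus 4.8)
The plan is to run exactly the argument behind Theorem~\ref{thm:ExactTailIndependence}, with the independence tail bound replaced by the functionally-dependent tail bound of Theorem~\ref{thm:DependentSums}. I would first invoke the discretization inequalities~\eqref{eq:FiniteDnk} and~\eqref{eq:FiniteRIPnk}, which hold regardless of the dependence structure, to reduce the two continuous suprema to finite maxima: $\mathcal{D}_n(k)$ is controlled by twice the maximum over $\theta\in\mathcal{N}(1/2,\Theta_k)$ of $|\frac1n\sum_{i=1}^n\{\theta^{\top}X_iY_i - \mathbb{E}[\theta^{\top}X_iY_i]\}|$, and $\RIP_n(k)$ by twice the maximum over $\theta\in\mathcal{N}(1/4,\Theta_k)$ of $|\frac1n\sum_{i=1}^n\{(\theta^{\top}X_i)^2 - \mathbb{E}[(\theta^{\top}X_i)^2]\}|$. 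Recall from the covering-number bound preceding the statement that these nets have cardinalities at most $(3ep/k)^k$ and $(5ep/k)^k$, which is the source of the $k\log(3ep/k)$ and $k\log(5ep/k)$ factors.

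The key step is to fit the summands into the hypotheses of Theorem~\ref{thm:DependentSums}. For fixed $\theta$ the summand for $\mathcal{D}_n(k)$ is the product process $Z_i^{(\Gamma)}(\theta)=\theta^{\top}X_iY_i$ and for $\RIP_n(k)$ the squared process $Z_i^{(\Sigma)}(\theta)=(\theta^{\top}X_i)^2$; as observed just before the theorem, both are products and linear combinations of functionally dependent sequences and hence retain a causal representation, so Theorem~\ref{thm:DependentSums} applies once their dependence-adjusted norms are controlled. This control is precisely Lemma~\ref{lem:ProductProcess}: coupling the factors and applying Cauchy--Schwarz (H\"older at the conjugate exponents $2r$) bounds the $\norm{\cdot}_{2,\nu}$ dependence norm of $Z_i^{(\Gamma)}(\theta)$ by $\vartheta_4^{(\Gamma)}(\theta)$ and that of $Z_i^{(\Sigma)}(\theta)$ by $\vartheta_4^{(\Sigma)}(\theta)$. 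The non-centered pieces of $\theta^{\top}X_i$ and $Y_i$ generate the $\max_{i}|\mathbb{E}[\theta^{\top}X_i]|$ and $\max_{i}|\mathbb{E}[Y_i]|$ terms inside $\vartheta_4$, which is exactly where the expectation bound in Assumption~\ref{eq:Dependent} is used; taking the supremum over $\theta\in\Theta_k$ turns these into $\sqrt{\Upsilon_{n,k}^{\Gamma}}$ and $\sqrt{\Upsilon_{n,k}^{\Sigma}}$. The same product lemma applied to the $\psi$-norms shows that a product of two sub-Weibull($\alpha$) sequences is sub-Weibull($\alpha/2$), so that $\norm{\{Z^{(\Gamma)}\}}_{\psi_{\alpha/2},\nu}$ and $\norm{\{Z^{(\Sigma)}\}}_{\psi_{\alpha/2},\nu}$ are of order $K_{n,p}^2$; this is why the indices $s(\alpha/2)$ and $T_1(s(\alpha/2))$ appear in place of $s(\alpha)$ and $T_1(s(\alpha))$.

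With these inputs I would apply Theorem~\ref{thm:DependentSums} to each finite maximum, taking $q$ equal to the net cardinality (so $\log(q+1)\le k\log(3ep/k)$, respectively $k\log(5ep/k)$), the variance proxy $\norm{\{Z\}}_{2,\nu}$ bounded by $\sqrt{\Upsilon_{n,k}^{\Gamma}}$ for $\mathcal{D}_n(k)$ and by $\sqrt{\Upsilon_{n,k}^{\Sigma}}$ for $\RIP_n(k)$, and $K_{n,q}$ of order $K_{n,p}^2$ with tail index $\alpha/2$. Dividing the resulting bound on $|\sum_i Z_i|$ by $n$ and absorbing the discretization factor $2$ yields the leading term $2eB_{\nu}\sqrt{\Upsilon(t+k\log(\cdot))/n}$, while the sub-Weibull remainder becomes the $C_{\alpha}K_{n,p}^2(\log n)^{1/s(\alpha/2)}\Omega_n(\nu)(t+k\log(\cdot))^{1/T_1(s(\alpha/2))}/n$ contribution; the additional $k^{1/2}$ and $k$ prefactors arise exactly as in the independent case under the marginal moment assumption. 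Each of the two applications holds with probability at least $1-8e^{-t}$, and a union bound delivers the stated $1-16e^{-t}$.

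The main obstacle is Lemma~\ref{lem:ProductProcess}, i.e.\ the accurate control of the dependence-adjusted norms of the product and squared processes. This is where both the degradation of the tail index from $\alpha$ to $\alpha/2$ and the appearance of the mean terms in $\vartheta_4$ are forced, and the bookkeeping is delicate: one must pair the $\nu$-weighted factor with the $\norm{\cdot}_{4,\nu}$ norm and the unweighted factor with $\norm{\cdot}_{4,0}$, and handle the non-centered parts through the expectation bound in Assumption~\ref{eq:Dependent}. Once this lemma is in hand, the remainder is a faithful transcription of the proof of Theorem~\ref{thm:ExactTailIndependence}, with Theorem~\ref{thm:DependentSums} substituted for the independence tail bound.
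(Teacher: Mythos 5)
Your proposal is correct and follows essentially the same route as the paper's own proof: discretize via~\eqref{eq:FiniteDnk} and~\eqref{eq:FiniteRIPnk}, bound $\norm{\{\theta^{\top}XY\}}_{2,\nu}$ and $\norm{\{(\theta^{\top}X)^2\}}_{2,\nu}$ by $\vartheta_4^{(\Gamma)}(\theta)$ and $\vartheta_4^{(\Sigma)}(\theta)$ via Lemma~\ref{lem:ProductProcess}, control the $\psi_{\alpha/2}$-type dependence-adjusted norms at order $k^{1/2}K_{n,p}^2$ and $kK_{n,p}^2$ (the paper does this by combining Lemma~\ref{lem:LinearCombinations} with Lemma~\ref{lem:ProductProcess}, which is exactly the sparse-vector mechanism you describe), then apply Theorem~\ref{thm:DependentSums} to each net maximum and take a union bound to get $1-16e^{-t}$. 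The only cosmetic difference is that you do not name Lemma~\ref{lem:LinearCombinations} explicitly, but your argument for the $k^{1/2}$ and $k$ prefactors is the same one that lemma formalizes.
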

\begin{proof}
By Lemma~\ref{lem:ProductProcess} and Assumption~\ref{eq:Dependent}, it holds that for all $\theta\in\Theta_k$,
\begin{align*}
\norm{\{\theta^{\top}XY\}}_{2, \nu} &\le \vartheta_4^{\Gamma}(\theta)\quad\mbox{and}\quad \norm{\{(\theta^{\top}X)^2\}}_{2,\nu} \le \vartheta_4^{\Sigma}(\theta).
\end{align*}
Also, using Lemma~\ref{lem:LinearCombinations} and Lemma~\ref{lem:ProductProcess}, it follows that
\begin{align*}
\sup_{r\ge 2}\,r^{-2/\alpha}\norm{\{\theta^{\top}XY\}}_{r,\nu} &\le 3k^{1/2}K_{n,p}^22^{1/\alpha},\\
\sup_{r\ge 2}\,r^{-2/\alpha}\norm{\{(\theta^{\top}X)^2\}}_{r, \nu} &\le 3kK_{n,p}^22^{1/\alpha}.
\end{align*}
Hence applying Theorem~\ref{thm:DependentSums}, the result is proved.
\end{proof}
Theorem~\ref{thm:ExactTailDependence} along with Theorem~\ref{thm:SimplifiedLinearRep} implies the following uniform linear representation result for linear regression under functional dependence. Recall the notation $\Lambda_n(k)$ from Equation~\eqref{eq:SimplifiedNotation} and also $\hat{\beta}_{n,M}, \beta_{n,M}$ from Equations~\eqref{eq:LeastSquaresEstimatorClosed}, \eqref{eq:LeastSquaresExpected}.
\begin{thm}\label{thm:LinearRepDependence}
If $(\Lambda_n(k))^{-1} = O(1)$ as $n, p\to\infty$, then under Assumption~\ref{eq:Dependent}, the following rates of convergence hold as $n\to\infty.$  
\begin{align*}
&\sup_{M\in\mathcal{M}(k)}\norm{\hat{\beta}_{n,M} - \beta_{n,M}}_2\\
&\quad= O_p\left(\sqrt{\frac{\Upsilon_{n,k}^{\Gamma}k\log(ep/k)}{n}} + K_{n,p}^2\frac{k^{1/2}(\log n)^{1/s(\alpha/2)}\Omega_n(\nu)(k\log(ep/k))^{1/{T_1(s(\alpha/2))}}}{n}\right),
\end{align*}
and
\begin{align*}
&\sup_{M\in\mathcal{M}(k)}\norm{\hat{\beta}_{n,M} - \beta_{n,M} - \frac{1}{n}\sum_{i=1}^n \left[\Sigma_n(M)\right]^{-1}X_i(M)\left(Y_i - X_i^{\top}(M)\beta_{n,M}\right)}_2\\
&\quad= O_p\left(\frac{\max\{\Upsilon_{n,k}^{\Gamma}, \Upsilon_{n,k}^{\Sigma}\}k\log(ep/k)}{n}\right)\\
&\quad\qquad + K_{n,p}^4O_p\left(\frac{k^2(\log n)^{2/s(\alpha/2)}(k\log(ep/k))^{2/T_1(s(\alpha/2))}\Omega_n^2(\nu)}{n^2}\right).
\end{align*}
\end{thm}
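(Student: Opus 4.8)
The plan is to feed the dependent-data tail bounds of Theorem~\ref{thm:ExactTailDependence} for $\RIP_n(k)$ and $\mathcal{D}_n(k)$ into the two deterministic bounds of Theorem~\ref{thm:L2UniformConsis} and Theorem~\ref{thm:SimplifiedLinearRep}, exactly as Corollary~\ref{cor:RatesofConvergence} is combined with Theorem~\ref{thm:SimplifiedLinearRep} to produce Theorem~\ref{thm:LinearRepIndependence} in the independent case. The only new ingredient relative to that setting is that the inputs $\RIP_n(k)$ and $\mathcal{D}_n(k)$ now carry the dependence-adjusted rates, so the argument is structurally identical and I would emphasize the bookkeeping rather than repeat the algebra.

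First I would convert the finite-$t$ statements of Theorem~\ref{thm:ExactTailDependence} into $O_p$ rates, mirroring the passage from Theorem~\ref{thm:ExactTailIndependence} to Corollary~\ref{cor:RatesofConvergence}: fix $t$ (or let it grow like $\log n$), absorb $t + k\log(ep/k)$ into $O(k\log(ep/k))$, and note that the exceptional probability $16e^{-t}$ is negligible. This gives $\mathcal{D}_n(k)$ at the rate stated in the first display of the theorem and $\RIP_n(k)$ at the same rate but with $\Upsilon_{n,k}^{\Sigma}$ in place of $\Upsilon_{n,k}^{\Gamma}$ and an additional factor $k^{1/2}$ in its second-order term. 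Next I would verify the structural hypothesis $\RIP_n(k)\le\Lambda_n(k)$ required by both deterministic theorems: since $(\Lambda_n(k))^{-1}=O(1)$ keeps $\Lambda_n(k)$ bounded away from zero while $\RIP_n(k)=o_p(1)$ in the regime where the stated rates vanish, this inequality holds with probability tending to one, and I would restrict attention to this event, whose complement contributes an asymptotically vanishing amount to any $O_p$ claim.

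For the $L_2$ conclusion I would invoke Theorem~\ref{thm:L2UniformConsis}. On the good event the denominator $\Lambda_n(k)-\RIP_n(k)$ is bounded below, and by~\eqref{eq:S2kBounded} together with $(\Lambda_n(k))^{-1}=O(1)$ one has $S_{2,k}(\Sigma_n,\Gamma_n)=O(1)$; the bound therefore collapses to $O_p(\mathcal{D}_n(k)+\RIP_n(k))$, and substituting the rates from the previous step gives the first display. For the linear-representation conclusion I would invoke Theorem~\ref{thm:SimplifiedLinearRep}, which after the same simplifications $(\Lambda_n(k))^{-1}=O(1)$ and $S_{2,k}=O(1)$ yields a bound of order $O_p\!\big(\RIP_n(k)(\mathcal{D}_n(k)+\RIP_n(k))\big)$. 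Multiplying the two $O_p$ rates, the leading cross term $\RIP_n(k)\mathcal{D}_n(k)$ and the square $\RIP_n(k)^2$ both reduce to $\max\{\Upsilon_{n,k}^{\Gamma},\Upsilon_{n,k}^{\Sigma}\}k\log(ep/k)/n$ after using $\sqrt{ab}\le\max\{a,b\}$, while the remaining products collapse to the stated $K_{n,p}^4$ term of order $n^{-2}$.

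The routine part is the substitution of rates; the step that needs care is the probabilistic bookkeeping in the product bound for the representation error. Since $\RIP_n(k)$ and $\mathcal{D}_n(k)$ are controlled on the \emph{same} high-probability event, their product is $O_p$ of the product of their rates, but I would check explicitly that every mixed term in $\RIP_n(k)(\mathcal{D}_n(k)+\RIP_n(k))$ is dominated by either the leading $n^{-1}$ term or the $n^{-2}$ second-order term; the elementary inequality $ab\le\tfrac12(a^2+b^2)\le\max\{a^2,b^2\}$ settles these. I expect this dominance check, rather than any single estimate, to be the main obstacle, together with confirming that the failure event of $\RIP_n(k)\le\Lambda_n(k)$ is asymptotically negligible so that it does not disturb the $O_p$ conclusions.
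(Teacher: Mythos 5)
Your proposal is correct and follows essentially the same route as the paper: Theorem~\ref{thm:LinearRepDependence} is stated there as an immediate consequence of feeding the dependent-data bounds of Theorem~\ref{thm:ExactTailDependence} into the deterministic inequalities (Theorem~\ref{thm:L2UniformConsis} for the estimation error and Theorem~\ref{thm:SimplifiedLinearRep} for the representation error), which is exactly the combination you carry out, including the conversion of the finite-$t$ tail bounds to $O_p$ rates, the high-probability verification of $\RIP_n(k)\le\Lambda_n(k)$ from $(\Lambda_n(k))^{-1}=O(1)$, and the boundedness of $S_{2,k}$ via~\eqref{eq:S2kBounded}. Your explicit dominance check using $ab\le\tfrac{1}{2}(a^2+b^2)$ simply spells out the bookkeeping that the paper leaves implicit.
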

In comparison to Theorem~\ref{thm:LinearRepIndependence}, the rates attained here are very similar except for two changes:
\begin{enumerate}
\item The exponent terms $\alpha/2,$ and $T_1(\alpha/2))$ are replaced by $s(\alpha/2),$ and $T_1(s(\alpha/2))$, respectively. This is because of the use of a version of Burkholder's inequality from \cite{Rio09} in the proof of Theorem~\ref{thm:DependentSumsApp}.
\item The factor $\Omega_n(\nu)$ in the second order terms above. This factor is due to the dependence of the process. If $\nu > 1/2$ (which corresponds to ``weak'' dependence), then $\Omega_n(\nu)$ is of order 1 and for the boundary case $\nu = 1/2$, $\Omega_n(\nu)$ is of order $(\log n)^{5/2}.$ In both these cases, the rates obtained for functionally dependent $\psi_{\alpha}$-random vectors match very closely the rates obtained for independent $\psi_{s(\alpha)}$-random vectors.
\end{enumerate}
\begin{rem}\;(Some Comments on Assumption~\ref{eq:Dependent})
Assumption~\ref{eq:Dependent} is similar to the one used in Theorem 3.3 of \cite{ZhangWu17} for derivation of a high-dimensional central limit theorem with logarithmic dependence on the dimensional $p$. It is worth mentioning that in their notation $\alpha$ corresponds to the functional dependence and $\nu$ corresponds to the moment assumption. Also their assumption is written as
\[
\sup_{r\ge 2}\,\frac{\norm{\{Z\}}_{r,\nu}}{r^{\alpha}} < \infty,\quad \mbox{(after swapping the dependence and moment parameters)}.
\]
Our assumption, however, is written as
\[
\sup_{r\ge 2}\,\frac{\norm{\{Z\}}_{r,\nu}}{r^{1/\alpha}} < \infty.
\]
Hence, our parameters $(\alpha, \nu)$ correspond to their parameters $(1/\nu, \alpha)$. Our assumptions are weaker than those used by \cite{Zhang14}. From the discussion surrounding Equation (28) there, they require geometric decay of $\Delta_{m,r,j}$ while we only require polynomial decay. \cite{ZhangWu17} only deal with stationary sequences and \cite{Zhang14} allows non-stationarity. Some useful examples verifying the bounds on the functional dependence measure are also provided in \cite{Zhang14}.
\end{rem}
\section{Discussion and Conclusions}\label{sec:Discussion}
In this paper, we have proved uniform-in-submodel results for the least squares linear regression estimator under a model-free framework allowing for the total number of covariates to diverge ``almost exponentially'' in $n$. Our results are based on deterministic inequalities. The exact rate bounds are provided when the random vectors are independent and functionally dependent. In both cases, the random variables are assumed to have weak exponential tails to provide logarithmic dependence on the dimension $p$. 

In this paper, we have primarily focused on ordinary least squares linear regression. The main results, uniform-in-submodels consistency and linear representation, continue to hold for a large class of $M$-estimators defined by twice differentiable loss function as shown in~\cite{kuchibhotla2018Deter}. The implications of these results are that one can use all the information from all the observations to build a submodel (subset of covariates) and apply a general $M$-estimation technique on the final model selected. These results can be extended to non-differentiable loss functions using techniques from empirical process theory, in particular, the stochastic uniform equicontinuity assumption. See, for example,~\citet[Chapter 2]{giessing2018high} for results under independence.

All of our results are free of the assumption of correctly specified models. Therefore, our results provide a ``target'' $\beta_{n,M}$ for the estimator $\hat{\beta}_{n,M}$ irrespective of whether $M$ is fixed or random as long as $|M| \le k$. This implication follows from the uniform-in-submodel feature of the results. The conclusion here is that if the statistician has a target in mind, then all they need to check is if $\beta_{n,M}$ is close to the target they are thinking of.

As mentioned in the beginning of the article one can rethink high-dimensional linear regression as using high-dimensional data for exploration to find a ``significant'' set of variables and then applying the ``low-dimensional'' linear regression technique. If the exploration is not restricted to a very principled method, then inference can be very difficult. This problem is exactly equivalent to the problem of valid post-selection inference. {Post-selection inference has a rich history in both statistics and econometrics. \cite{leeb2005model,Leeb06,leeb2006performance,leeb2008can} have provided several impossibility results regarding the estimation of the distribution of $\widehat{\beta}_{\widehat{M}}$, when $\widehat{M}$ represents the data-dependent selected model. One way to avoid this difficulty is by performing inference for all models simultaneously.} The results in this paper allow for the construction of a simultaneous inference procedure using a high-dimensional central limit theorem and multiplier bootstrap; see~\cite{bachoc2019valid,Bac16,kuchibhotla2021high} and~\citet[Section 2]{belloni2018high} for more details. A related exploration will be provided in a future manuscript.
\appendix
\begin{center}
APPENDIX
\end{center}
\section{Auxiliary Results for Independent Random Vectors}\label{AppSec:Independence}
The following result proves a tail bound for a maximum of the average of mean zero random variables and follows from Theorem 4 of \cite{Adam08}. The result there is only stated for $\alpha\in(0, 1]$, however, the proof can be extended to the case $\alpha > 1$. See the forthcoming paper \cite{KuchAbhi17} for a clear exposition.
\begin{thm}\label{thm:IndependenceTailBound}
Suppose $W_1, \ldots, W_n$ are mean zero independent random vectors in $\mathbb{R}^q, q\ge 1$ such that for some $\alpha > 0$ and $K_{n,q} > 0$,
\[
\max_{1\le i\le n}\max_{1\le j\le q}\,\norm{W_i(j)}_{\psi_{\alpha}} \le K_{n,q}.
\]
Define 
\[
\Gamma_{n,q} := \max_{1\le j\le q}\,\frac{1}{n}\sum_{i=1}^n \mathbb{E}\left[W_i^2(j)\right].
\]
Then for any $t\ge 0,$ with probability at least $1 - 3e^{-t}$,
\begin{align*}
\max_{1\le j\le q}\,\left|\frac{1}{n}\sum_{i=1}^n W_i(j)\right| \le 7\sqrt{\frac{\Gamma_{n,q}(t + \log(2q))}{n}} + \frac{C_{\alpha}K_{n,q}(\log(2n))^{{1}/{\alpha}}(t + \log(2q))^{{1}/{T_1(\alpha)}}}{n},
\end{align*}
where $T_1(\alpha) = \min\{\alpha, 1\}$ and $C_{\alpha}$ is a constant depending only on $\alpha$.
\end{thm}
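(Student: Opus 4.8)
The plan is to reduce the maximum over the $p$ coordinates to a single-coordinate estimate by a union bound, and to obtain the single-coordinate estimate as a Bernstein-type inequality for sums of independent sub-Weibull$(\alpha)$ variables. Fixing $1\le j\le p$ and writing $\sigma_j^2 := \frac{1}{n}\sum_{i=1}^n \mathbb{E}[W_i^2(j)] \le \Gamma_{n,p}$, I would first establish that for every $s\ge 0$,
\[
\mathbb{P}\left(\left|\frac{1}{n}\sum_{i=1}^n W_i(j)\right| \ge 7\sqrt{\frac{\sigma_j^2 s}{n}} + \frac{C_\alpha K_{n,p}(\log(2n))^{1/\alpha}\, s^{1/T_1(\alpha)}}{n}\right) \le 3 e^{-s},
\]
and then set $s = t + \log(2p)$ and union bound over $1\le j\le p$, so that the exceedance probability is at most $3p\,e^{-(t+\log 2p)} = \tfrac{3}{2}e^{-t} \le 3e^{-t}$. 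This is precisely why $\log(2p)$ enters additively inside both terms, and why $\sigma_j^2$ may be replaced by its uniform upper bound $\Gamma_{n,p}$ to give the stated form.

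For the single-coordinate bound I would use a truncation at the level $M \asymp K_{n,p}(\log(2n))^{1/\alpha}$, which up to constants is the $\psi_\alpha$-quasi-norm of $\max_{1\le i\le n}|W_i(j)|$ via the standard maximal inequality $\norm{\max_i |W_i(j)|}_{\psi_\alpha} \lesssim (\log(2n))^{1/\alpha}\max_i\norm{W_i(j)}_{\psi_\alpha}$ (see \cite{VdvW96}). Splitting each summand into its truncation $W_i(j)\mathbbm{1}\{|W_i(j)|\le M\}$ and its tail $W_i(j)\mathbbm{1}\{|W_i(j)|> M\}$ and recentering each piece, I would control the truncated part (now bounded by $2M$) by the classical Bernstein inequality. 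This produces the Gaussian term $\sqrt{\sigma_j^2 s/n}$ from the variance and, since the summands are bounded by $M$, a linear term $Ms/n$; the latter reproduces the heavy-tail term exactly in the regime $\alpha\ge 1$, where $T_1(\alpha)=1$.

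The main obstacle is the tail (untruncated) part, and specifically the regime $\alpha<1$, where $\psi_\alpha$ is only a quasi-norm and the moment generating function of the sum need not exist at any positive scale. Here the deviation is governed by the rare events $\{|W_i(j)|>M\}$ and the relevant scale is again $M\asymp K_{n,p}(\log 2n)^{1/\alpha}$; controlling the $\psi_\alpha$-behavior of $\sum_i W_i(j)\mathbbm{1}\{|W_i(j)|>M\}$ yields a tail of the form $\exp(-c(u/M)^\alpha)$, whose inversion gives the exponent $1/\alpha = 1/T_1(\alpha)$ and hence the term $M s^{1/\alpha}/n$. This step is exactly Theorem~4 of \cite{Adam08} specialised to the finite index set of coordinate functionals. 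The case $\alpha>1$ is not covered there, but it is handled by observing that $\psi_\alpha$ is then a genuine convex Orlicz norm with finite exponential moments, so a direct Chernoff argument on the recentered summands applies and produces an even thinner tail (this extension is carried out in detail in \cite{KuchAbhi17}). Assembling the bounded and tail contributions yields the single-coordinate inequality displayed above, and the union bound of the first paragraph completes the proof.
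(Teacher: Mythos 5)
Your proposal is correct and takes essentially the same route as the paper: the paper's proof likewise fixes a coordinate $j$, applies Theorem 4 of \cite{Adam08} to the singleton class $\mathcal{F} = \{f\}$ with $f(W_i) = W_i(j)$, and finishes with a union bound over the $p$ coordinates, so your truncation-plus-Bernstein discussion merely spells out the mechanics inside Adamczak's theorem, which you ultimately invoke anyway. The only difference is cosmetic: for $\alpha > 1$ the paper uses Theorem 5 of \cite{Adam08} (with $\alpha = 1$) to bound the second part of inequality (8) there, whereas you propose a direct Chernoff argument, with both routes deferring to the exposition in \cite{KuchAbhi17}.
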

\begin{proof}
Fix $1\le j\le q$ and apply Theorem 4 of \cite{Adam08} with $\mathcal{F} = \{f\}$ where $f(W_i) = W_i(j)$ for $1\le i\le n$. Then applying the union bound the result follows. To extend the result to the case $\alpha > 1$, use Theorem 5 of \cite{Adam08} with $\alpha = 1$ to bound the second part of inequality (8) there. 
\end{proof}
Using Theorem~\ref{thm:IndependenceTailBound}, we get the following results for RIP and $\mathcal{D}$ under independence.
\begin{thm}\label{thm:ExactTailIndependence}
Fix $n, k\ge 1$ and let $t\ge 0$ be any real number. Then the following probability statements hold true:
\begin{enumerate}
\item[(a)] Under Assumption~\ref{eq:Marginal}, with probability at least $1 - 6e^{-t}$, the following two inequalities hold simultaneously,
\begin{align*}
\mathcal{D}_{n}(k) &\le 14\sqrt{\frac{\Upsilon_{n,k}^{\Gamma}(t + k\log(3ep/k))}{n}}\\ 
&\qquad+ C_{\alpha}K_{n,p}^2\frac{k^{1/2}(\log(2n))^{{2}/{\alpha}}(t + k\log(3ep/k))^{1/{T_1(\alpha/2)}}}{n},
\end{align*}
and
\begin{align*}
\mathcal{\RIP}_n(k) &\le 14\sqrt{\frac{\Upsilon_{n,k}^{\Sigma}(t + k\log(5ep/k))}{n}}\\ &\qquad+ C_{\alpha}K_{n,p}^2\frac{k(\log(2n))^{{2}/{\alpha}}(t + k\log(5ep/k))^{1/{T_1(\alpha/2)}}}{n}.
\end{align*}
\item[(b)] Under Assumption~\ref{eq:Joint}, with probability at least $1 - 6e^{-t}$, the following two inequalities hold simultaneously,
\begin{align*}
\mathcal{D}_{n}(k) &\le 14\sqrt{\frac{\Upsilon_{n,k}^{\Gamma}(t + k\log(3ep/k))}{n}}\\ 
&\qquad+ C_{\alpha}K_{n,p}^2\frac{(\log(2n))^{{2}/{\alpha}}(t + k\log(3ep/k))^{{1}/{T_1(\alpha/2)}}}{n},
\end{align*}
and
\begin{align*}
\mathcal{\RIP}_n(k) &\le 14\sqrt{\frac{\Upsilon_{n,k}^{\Sigma}(t + k\log(5ep/k))}{n}}\\ &\qquad+ C_{\alpha}K_{n,p}^2\frac{(\log(2n))^{{2}/{\alpha}}(t + k\log(5ep/k))^{{1}/{T_1(\alpha/2)}}}{n}.
\end{align*}
Here $T_1(\alpha) = \min\{\alpha, 1\}$ and $C_{\alpha}$ is a constant depending only on $\alpha$.
\end{enumerate}
\end{thm}
\begin{proof}
These bounds follow from Theorem~\ref{thm:IndependenceTailBound} and inequalities~\eqref{eq:FiniteDnk} and~\eqref{eq:FiniteRIPnk}. To bound $\mathcal{D}_n(k)$, we take
\[
W_i := (\theta^{\top}X_iY_i)_{\theta\in \mathcal{N}(1/2, \Theta_k)},
\]
in Theorem~\ref{thm:IndependenceTailBound}. Because $|\mathcal{N}(1/2, \Theta_k)| \le (3ep/k)^k$, the result follows. Similarly for $\mathcal{\RIP}_n(k)$, we take
\[
W_i := ((\theta^{\top}X_i)^2)_{\theta\in\mathcal{N}(1/4, \Theta_k)},
\]
in Theorem~\ref{thm:IndependenceTailBound}.
\end{proof}
\section{Auxiliary Results for Dependent Random Vectors}\label{AppSec:Dependence}
In this section, we present a moment bound for sum of functionally dependent mean zero real-valued random variables. The moment bound here is an extension of Theorem 2 of \cite{WuWu16} to random variables with exponential tails. The main distinction is that our moment bound exhibits a part Gaussian behavior. For proving these moment bounds, we need a few preliminary results and notation. Suppose $Z_1 \ldots, Z_n$ are mean zero real valued random variables with a causal representation 
\begin{equation}\label{eq:CausalProcess}
Z_i = g_i(\ldots, \varepsilon_{i-1}, \varepsilon_i),
\end{equation}
for some real valued function $g_i$. We write $\delta_{k,r} = \norm{Z_i - Z_{i,k}}_r$. The following proposition bounds the $r$-th moment of $Z_i$ in terms of $\norm{\{Z\}}_{r,\nu}$. This is based on the calculation shown after Equation (2.8) in \cite{WuWu16}. 
\begin{prop}\label{prop:Converse}
Consider the setting above. If $\mathbb{E}\left[Z_i\right] = 0$ for $1\le i\le n$, then $$\norm{Z_i}_r \le \norm{\{Z\}}_{r,0} \le \norm{\{Z\}}_{r,\nu},\quad\mbox{for any}\quad r\ge 1\quad\mbox{and}\quad \nu > 0.$$
\end{prop}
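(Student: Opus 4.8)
The plan is to prove the two inequalities separately, the second being essentially trivial and the first relying on a martingale (projection) decomposition along the filtration $\{\mathcal{F}_s\}$. For the bound $\norm{\{Z\}}_{r,0} \le \norm{\{Z\}}_{r,\nu}$, I would simply note that since $\nu > 0$ we have $(m+1)^{\nu} \ge 1$ for every $m \ge 0$, so $\Delta_{m,r} \le (m+1)^{\nu}\Delta_{m,r}$ termwise; taking the supremum over $m$ gives the claim. I would also record that, because $\Delta_{m,r} = \sum_{s \ge m}\delta_{s,r}$ is non-increasing in $m$, the supremum defining $\norm{\{Z\}}_{r,0}$ is attained at $m=0$, i.e. $\norm{\{Z\}}_{r,0} = \Delta_{0,r} = \sum_{s=0}^{\infty}\delta_{s,r}$.

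For the first inequality $\norm{Z_i}_r \le \norm{\{Z\}}_{r,0}$, I would introduce the projection operators $\mathcal{P}_s(\cdot) := \mathbb{E}[\,\cdot\mid\mathcal{F}_s] - \mathbb{E}[\,\cdot\mid\mathcal{F}_{s-1}]$. Since $Z_i$ is $\mathcal{F}_i$-measurable with $\mathbb{E}[Z_i]=0$, telescoping the conditional expectations and using triviality of the tail $\sigma$-field (so that $\mathbb{E}[Z_i\mid\mathcal{F}_{-\infty}] = \mathbb{E}[Z_i] = 0$) yields the decomposition $Z_i = \sum_{s=-\infty}^{i}\mathcal{P}_s Z_i = \sum_{j=0}^{\infty}\mathcal{P}_{i-j}Z_i$, convergent in $L_r$ whenever $\Delta_{0,r} < \infty$ (and the asserted bound is vacuous otherwise).

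The crux is the coupling identity $\mathcal{P}_{i-j}Z_i = \mathbb{E}[Z_i - Z_{i,j}\mid\mathcal{F}_{i-j}]$. To establish it I would argue that $\mathbb{E}[Z_i\mid\mathcal{F}_{i-j-1}] = \mathbb{E}[Z_{i,j}\mid\mathcal{F}_{i-j}]$: the coupled variable $Z_{i,j}$ is obtained from $Z_i$ by replacing the innovation $\varepsilon_{i-j}$ with an independent copy $\varepsilon_{i-j}'$, and since $\varepsilon_{i-j}'$ is independent of $\mathcal{F}_{i-j}$ and equal in law to $\varepsilon_{i-j}$, conditioning $Z_{i,j}$ on $\mathcal{F}_{i-j}$ integrates out $\varepsilon_{i-j}', \varepsilon_{i-j+1}, \ldots, \varepsilon_i$ and reproduces exactly $\mathbb{E}[Z_i\mid\mathcal{F}_{i-j-1}]$ as a function of $(\ldots,\varepsilon_{i-j-1})$. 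Substituting into the definition of $\mathcal{P}_{i-j}$ gives the identity, and the contraction property of conditional expectation (conditional Jensen, valid for $r \ge 1$) then yields $\norm{\mathcal{P}_{i-j}Z_i}_r \le \norm{Z_i - Z_{i,j}}_r \le \delta_{j,r}$.

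Combining these, the triangle inequality in $L_r$ (which requires $r \ge 1$) gives $\norm{Z_i}_r \le \sum_{j=0}^{\infty}\norm{\mathcal{P}_{i-j}Z_i}_r \le \sum_{j=0}^{\infty}\delta_{j,r} = \Delta_{0,r} = \norm{\{Z\}}_{r,0}$, completing the argument. The main obstacle, and the only step demanding genuine care, is the coupling identity $\mathbb{E}[Z_{i,j}\mid\mathcal{F}_{i-j}] = \mathbb{E}[Z_i\mid\mathcal{F}_{i-j-1}]$: one must track precisely which innovations are held fixed and which are integrated out, and verify that swapping a single innovation for an independent copy before conditioning on the coarser past is equivalent to removing that innovation from the conditioning. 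Everything else, the telescoping decomposition, the $L_r$-contraction of conditional expectation, and the triangle inequality, is routine.
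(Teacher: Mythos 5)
Your proposal is correct and follows essentially the same route as the paper's proof: the identical telescoping projection decomposition $Z_i = \sum_{\ell \le i}\left(\mathbb{E}\left[Z_i|\mathcal{F}_{\ell}\right] - \mathbb{E}\left[Z_i|\mathcal{F}_{\ell-1}\right]\right)$, the same coupling identity rewriting each increment as $\mathbb{E}\left[Z_i - Z_{i,i-\ell}|\mathcal{F}_{\ell}\right]$, then conditional Jensen and the triangle inequality to reach $\Delta_{0,r} = \norm{\{Z\}}_{r,0}$, with the trivial monotonicity step for $\norm{\{Z\}}_{r,0} \le \norm{\{Z\}}_{r,\nu}$. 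The only difference is expository: you spell out the coupling identity, the $L_r$-convergence caveat, and the tail-triviality justification that the paper leaves implicit.
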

\begin{proof}
Assuming $\mathbb{E}\left[Z_i\right] = 0$ for $1\le i\le n$, it follows that
\[
Z_i = \sum_{\ell = -\infty}^{i}\left(\mathbb{E}\left[Z_i\big|\mathcal{F}_{\ell}\right] - \mathbb{E}\left[Z_i\big|\mathcal{F}_{\ell - 1}\right]\right),
\]
and so,
\begin{align*}
\norm{Z_i}_r &\le \sum_{\ell = -\infty}^{i} \norm{\mathbb{E}\left[Z_i\big|\mathcal{F}_{\ell}\right] - \mathbb{E}\left[Z_i\big|\mathcal{F}_{\ell - 1}\right]}_r = \sum_{\ell = -\infty}^{i} \norm{\mathbb{E}\left[Z_i - Z_{i,i-\ell}\big|\mathcal{F}_{-\ell}\right]}_r \le \sum_{\ell = 0}^{\infty} \delta_{\ell, r}.
\end{align*}
The last inequality follows from Jensen's inequality and noting that the last bound equals $\Delta_{0,r}$, it follows that $\norm{Z_i}_r \le \Delta_{0,r} = \norm{\{Z\}}_{r,0}.$
\end{proof}
The following lemma provides a bound on the moments of a martingale in terms of the moments of the martingale difference sequence. This result is an improvement over the classical Burkholder's inequality.
\begin{lem}[Theorem 2.1 of \cite{Rio09}]\label{lem:MartingaleBounds}
Let $\{S_n:\,n\ge 0\}$ be a martingale sequence with $S_0 = 0$ adapted with respect to some non-decreasing filtration $\mathcal{F}_n, n\ge 0$. Let $X_k = S_k - S_{k-1}$ denote the corresponding martingale difference sequence. Then for any $p\ge 2$,
\[
\norm{S_n}_p \le \sqrt{p - 1}\left(\sum_{k = 1}^n \norm{X_k}_p^2\right)^{1/2}.
\]
\end{lem}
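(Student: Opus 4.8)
The plan is to reduce the inequality to a sharp one-step recursion and then telescope. Writing $S_k = S_{k-1} + X_k$, I would first establish that for every $k$,
\[
\norm{S_k}_p^2 \le \norm{S_{k-1}}_p^2 + (p-1)\norm{X_k}_p^2.
\]
Granting this one-step bound, summing over $k$ and using $S_0 = 0$ yields $\norm{S_n}_p^2 \le (p-1)\sum_{k=1}^n \norm{X_k}_p^2$, which is the claim after taking square roots. Thus the whole content is the recursion above, which is exactly the statement that $L^p$ for $p \ge 2$ is $2$-smooth (of martingale type $2$) with the optimal constant $\sqrt{p-1}$; the martingale property must be used to obtain the sharp constant, since a crude symmetrization or Taylor-remainder bound loses a factor (and in fact fails already at $p = 3$).

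To prove the one-step bound I would fix $k$, abbreviate $S = S_{k-1}$ (which is $\mathcal{F}_{k-1}$-measurable) and $X = X_k$ (which satisfies $\mathbb{E}[X \mid \mathcal{F}_{k-1}] = 0$), and study the scalar function $\phi(\lambda) := \norm{S + \lambda X}_p^2 = N(\lambda)^{2/p}$ on $[0,1]$, where $N(\lambda) := \mathbb{E}|S + \lambda X|^p$. Since $x \mapsto |x|^p$ is $C^2$ for $p \ge 2$ with $\frac{d^2}{dx^2}|x|^p = p(p-1)|x|^{p-2}$, differentiating under the expectation gives $N'(\lambda) = p\,\mathbb{E}[|S+\lambda X|^{p-2}(S+\lambda X)X]$, so that, conditioning on $\mathcal{F}_{k-1}$,
\[
N'(0) = p\,\mathbb{E}\!\left[|S|^{p-2}S\,\mathbb{E}[X \mid \mathcal{F}_{k-1}]\right] = 0, \qquad \text{hence} \qquad \phi'(0) = \tfrac{2}{p}N(0)^{2/p - 1}N'(0) = 0.
\]
This is the only place the martingale structure enters, and it is what kills the first-order term.

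For the second derivative, $\phi'' = \tfrac{2}{p}\big[(\tfrac{2}{p}-1)N^{2/p-2}(N')^2 + N^{2/p-1}N''\big]$, and since $p \ge 2$ gives $\tfrac{2}{p} - 1 \le 0$, the first summand is nonpositive and may be discarded, leaving $\phi''(\lambda) \le \tfrac{2}{p}N(\lambda)^{2/p-1}N''(\lambda)$ with $N''(\lambda) = p(p-1)\mathbb{E}[|S+\lambda X|^{p-2}X^2]$. Applying Hölder's inequality with conjugate exponents $p/(p-2)$ and $p/2$ bounds $\mathbb{E}[|S+\lambda X|^{p-2}X^2] \le N(\lambda)^{(p-2)/p}\norm{X}_p^2$, and the powers of $N(\lambda)$ then cancel exactly---$\tfrac{2}{p} - 1 + \tfrac{p-2}{p} = 0$---so that $\phi''(\lambda) \le 2(p-1)\norm{X}_p^2$ uniformly in $\lambda$. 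Taylor's formula $\phi(1) = \phi(0) + \phi'(0) + \int_0^1 (1-\lambda)\phi''(\lambda)\,d\lambda$ now gives $\phi(1) \le \phi(0) + (p-1)\norm{X}_p^2$, which is precisely the one-step bound.

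The conceptual crux---and the step I expect to require the most care---is the choice of the functional $\phi(\lambda) = \norm{S+\lambda X}_p^2$ together with the observation that the exponents of $N(\lambda)$ conspire to cancel, leaving the constant $2(p-1)$ free of $S$; this is what produces the sharp $\sqrt{p-1}$ rather than a suboptimal multiple. The remaining technical points are routine: justifying differentiation under the expectation (the integrands are dominated using $\norm{X}_p, \norm{S}_p < \infty$, and for $2 \le p < 3$ the factor $|x|^{p-2}$ is continuous and locally bounded, so there is no singularity at the origin), and treating the degenerate case $N(0) = 0$, i.e. $S_{k-1} = 0$ almost surely, separately---there the recursion reads $\norm{X_k}_p^2 \le (p-1)\norm{X_k}_p^2$, which holds for $p \ge 2$.
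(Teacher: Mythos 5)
Your proposal is correct. Note, however, that the paper does not prove this lemma at all: it is imported verbatim as Theorem~2.1 of the cited reference (Rio, 2009), so there is no in-paper argument to compare against. Your proof is essentially a reconstruction of the standard (and Rio's own) argument: the one-step recursion $\norm{S_{k-1}+X_k}_p^2 \le \norm{S_{k-1}}_p^2 + (p-1)\norm{X_k}_p^2$, i.e.\ the $2$-uniform smoothness of $L^p$ with constant $\sqrt{p-1}$, obtained by Taylor-expanding $\phi(\lambda)=\norm{S+\lambda X}_p^2$, killing the first-order term with the martingale property, and cancelling the powers of $N(\lambda)$ via H\"older in the second-order term. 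The only loose end is that you rule out $N(0)=0$ but not $N(\lambda_0)=0$ for some $\lambda_0\in(0,1]$; this is closed in one line, since $S=-\lambda_0 X$ a.s.\ would make $X$ $\mathcal{F}_{k-1}$-measurable, whence $X=\mathbb{E}[X\mid\mathcal{F}_{k-1}]=0$ and $S=0$ a.s., contradicting $N(0)>0$, so in the non-degenerate case $N$ is bounded away from zero on $[0,1]$ and the calculus is legitimate throughout.
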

The following simple calculation is also used in Theorem~\ref{thm:DependentSumsApp}. Define
\[
L:= \left\lfloor\frac{\log n}{\log 2}\right\rfloor\quad\mbox{and}\quad \lambda_{\ell} := \begin{cases}
3\pi^{-2}\ell^{-2},&\mbox{if }1\le \ell \le L/2,\\
3\pi^{-2}(L + 1 - \ell)^{-2},&\mbox{if }L/2 < \ell \le L.
\end{cases}
\]
\begin{lem}\label{lem:SimpleCalculation}
The following inequalities hold true:
\begin{itemize}
\item[(a)] For any $\beta \ge 0$ and $p\ge 2$,
\begin{equation}\label{eq:FirstIneqReq}
\sum_{\ell = 1}^L \frac{1}{\lambda_{\ell}^p2^{p\ell\beta}} \le 2\sum_{\ell = 1}^{L/2} \frac{1}{\lambda_{\ell}^p2^{p\ell\beta}}\le \begin{cases}\left({5}/{\beta^3}\right)^p\left({\pi^2}/{3}\right)^{p+1}, &\mbox{if }\beta > 0,\\
2(\log_2n)^{2p+1}\left({\pi^2}/{3}\right)^{p+1},&\mbox{if }\beta = 0.
\end{cases}
\end{equation}
\item[(b)] For any $\beta > 0$ and $p\ge 2$, 
\begin{equation}\label{eq:SecondIneqReq}
\sum_{\ell = 1}^{L} \frac{2^{p\ell(1/2 - \beta)}}{\lambda_{\ell}^p} \le \left(\frac{\pi^2}{3}\right)^{p+1}
\begin{cases}
(5/(\beta - 1/2)^3)^p,&\mbox{if }\beta > 1/2,\\
2(\log_2n)^{2p+1},&\mbox{if }\beta = 1/2,\\
(2n)^{(1/2 - \beta)p}(5/(1/2 - \beta)^3)^p,&\mbox{if }\beta < 1/2.
\end{cases}
\end{equation}
\end{itemize}
\end{lem}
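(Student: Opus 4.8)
The plan is to reduce every case to an elementary polynomial-times-geometric sum, exploiting that on the lower half of the index range $1/\lambda_\ell = (\pi^2/3)\ell^2$, so that $\lambda_\ell^{-p} = (\pi^2/3)^p\ell^{2p}$. First I would record the reflection structure: $\lambda_\ell$ is symmetric under $\ell\mapsto L+1-\ell$, i.e.\ $1/\lambda_\ell = (\pi^2/3)(L+1-\ell)^2$ for $\ell>L/2$, so pairing an upper-half index $\ell$ with its mirror $\ell'=L+1-\ell\le L/2$ gives terms with identical $\lambda$-factors. In part (a) the weight $2^{-p\ell\beta}$ is nonincreasing in $\ell$ and $L+1-\ell\ge\ell'$, so each upper-half term is dominated by its mirror image; this immediately yields the first inequality $\sum_{\ell=1}^L\le 2\sum_{\ell=1}^{L/2}$ and reduces the problem to a one-sided sum.

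The heart of the matter is the core estimate
$$\sum_{\ell \ge 1} \bigl(\ell^2 2^{-\ell\beta}\bigr)^p \le \frac{\pi^2}{6}\Bigl(\frac{5}{\beta^3}\Bigr)^p, \qquad \beta > 0,\ p \ge 2.$$
I would prove this by splitting the exponent as $\beta=\beta/2+\beta/2$: maximizing $\ell^2 2^{-\ell\beta/2}$ over $\ell>0$ gives $\ell^2 2^{-\ell\beta/2}\le 16/(e^2(\log 2)^2\beta^2)$, so that $(\ell^2 2^{-\ell\beta})^p\le (16/(e^2(\log 2)^2\beta^2))^p\,2^{-p\ell\beta/2}$ and the residual geometric series sums to $(2^{p\beta/2}-1)^{-1}$. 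The maximization supplies the factor $\beta^{-2}$ and the geometric sum the remaining $\beta^{-1}$, producing $\beta^{-3}$. Substituting into $2(\pi^2/3)^p\sum_{\ell=1}^{L/2}\ell^{2p}2^{-p\ell\beta}$ and using $2\cdot(\pi^2/6)=\pi^2/3$ gives the stated $(\pi^2/3)^{p+1}(5/\beta^3)^p$. For $\beta=0$ the summand is purely polynomial and I would use $\sum_{\ell=1}^{L/2}\ell^{2p}\le (L/2)^{2p+1}\le (\log_2 n)^{2p+1}$, since $L\le\log_2 n$.

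For part (b) I would treat each regime as a perturbation of part (a). When $\beta>1/2$ the weight $2^{p\ell(1/2-\beta)}$ is nonincreasing, so the same reflection bounds the full sum by twice the lower-half sum, which is the core estimate with $\beta$ replaced by $\beta-1/2$. When $\beta=1/2$ the weight is identically $1$ and the symmetric sum reduces to the $\beta=0$ polynomial bound. When $\beta<1/2$ the weight grows with $\ell$, so I would factor out its extremal value $2^{pL(1/2-\beta)}\le n^{p(1/2-\beta)}$ (using $2^L\le n$), reflect via $\ell'=L+1-\ell$, and apply the core estimate with parameter $1/2-\beta$; the leftover factor $2^{p(1/2-\beta)}$ then combines with $n^{p(1/2-\beta)}$ to produce the advertised $(2n)^{p(1/2-\beta)}$.

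The main obstacle will be pinning down the universal constant $5$ in the core estimate so that it holds for \emph{every} $\beta>0$ and $p\ge 2$ at once. A naive integral-plus-Stirling comparison yields a factor $\beta^{-2}$ (times a lower-order $\beta^{-1}$) with a favorable constant when $\beta$ is small, but it degrades for large $\beta$, where in fact the sum decays faster than any power of $\beta^{-1}$. The split-and-geometric argument sidesteps this by keeping the exact factor $(2^{p\beta/2}-1)^{-1}$ intact; verifying the claimed inequality then reduces to a one-variable check of the form $\log(c\beta)\le(\tfrac12\log 2)\,\beta$ for an explicit constant $c$, whose maximum over $\beta$ is strictly negative, confirming that $5$ (rather than a larger constant) suffices.
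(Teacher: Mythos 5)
Your overall architecture matches the paper's: the reflection $\ell\mapsto L+1-\ell$ to reduce to the lower half, the reduction of part (b) with $\beta>1/2$ to part (a) with parameter $\beta-1/2$, and (for $\beta<1/2$) factoring out the extremal weight $2^{p(L+1)(1/2-\beta)}\le(2n)^{p(1/2-\beta)}$ before reflecting are all essentially what the paper does. The divergence is in the core estimate, and that is where your argument has a genuine gap. Your split $\beta=\beta/2+\beta/2$ reduces matters to the inequality
\[
(c_0\beta)^p \;\le\; \frac{\pi^2}{6}\left(2^{p\beta/2}-1\right),
\qquad c_0:=\frac{16}{5e^2(\log 2)^2}\approx 0.90,
\]
and you claim this follows from the one-variable check $\log(c_0\beta)\le\tfrac12(\log 2)\beta$. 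That check (whose maximum is indeed $\approx -0.044<0$) only yields $(c_0\beta)^p\le e^{-0.044p}\,2^{p\beta/2}$, and replacing $2^{p\beta/2}-1$ by a constant multiple of $2^{p\beta/2}$ is invalid exactly where the ``$-1$'' matters, namely when $p\beta$ is small: for $p=2$, $\beta=0.1$ your intermediate bound is $e^{-0.088}2^{0.1}\approx 0.98$, while the target right-hand side is $(\pi^2/6)(2^{0.1}-1)\approx 0.12$. So the stated reduction fails on the whole regime $p\beta/2\lesssim 1.17$, even though the inequality you want is true there. The gap is fixable: on that regime use $2^x-1\ge x\log 2$, so the ratio is at most $2c_0^p\beta^{p-1}/(p\log 2)$, whose maximum over $\beta\le 2.35/p$ and $p\ge 2$ is about $1.38<\pi^2/6$; with this second regime added, your constant $5$ does survive.

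It is worth seeing how the paper avoids this two-regime bookkeeping entirely. It maximizes one extra power of $\ell$: $\sup_{\ell>0}\ell^3 2^{-\ell\beta}\le(3/(e\beta\log 2))^3\le 5/\beta^3$, so that $(\ell^2 2^{-\ell\beta})^p\le(5/\beta^3)^p\,\ell^{-p}$, and then sums $\sum_{\ell\ge1}\ell^{-p}\le\sum_{\ell\ge1}\ell^{-2}=\pi^2/6$ uniformly in $\beta$ and $p$. Peeling off a single power of $\ell$, rather than half the exponential, leaves a convergent series whose value never interacts with $\beta$, so no geometric series and no delicate constant-chasing ever arises.
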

\begin{proof}
\begin{enumerate}
\item[(a)] Note that for any $\beta > 0$,
\[
\sup_{\ell > 0}\ell^32^{-\ell\beta} = \ell^3\exp(-(\log 2)\ell\beta) \le \left(\frac{3}{e\beta\log 2}\right)^3 \le \frac{5}{\beta^3},
\]
and so,
\begin{equation*}
\begin{split}
\left(\frac{3}{\pi^2}\right)^p\sum_{\ell = 1}^L \frac{1}{\lambda_{\ell}^p2^{p\ell\beta}} &= \sum_{\ell = 1}^{L/2}\left(\frac{\ell^2}{2^{\ell\beta}}\right)^p + \sum_{\ell = L/2 + 1}^L \left(\frac{(L + 1 - \ell)^2}{2^{\ell\beta}}\right)^p\\
&\le \sum_{\ell = 1}^{L/2} \left(\frac{\ell^2}{2^{\ell\beta}}\right)^p + 2^{-p\beta}\sum_{\ell = 1}^{L/2} \left(\frac{\ell^2}{2^{\ell\beta}}\right)^p\\ & \le 2\left(\frac{5}{\beta^3}\right)^p\sum_{\ell = 1}^{L/2} \frac{1}{\ell^p}\le \frac{\pi^2}{3}\left(\frac{5}{\beta^3}\right)^p.
\end{split}
\end{equation*}
Hence the result (a) follows. The case $\beta = 0$ follows from the calculation in (b).
\item[(b)] If $\beta > 1/2$, then
\[
\sum_{\ell = 1}^{L} \frac{2^{p\ell(1/2 - \beta)}}{\lambda_{\ell}^p} = \sum_{\ell = 1}^L \frac{1}{\ell^p2^{p\ell(\beta - 1/2)}},
\]
and so, the bound for this case follows from (a).

If $\beta = 1/2$, then
\begin{equation}\label{eq:BoundXi2Equal}
\sum_{\ell = 1}^{L} \frac{2^{p\ell(1/2 - \beta)}}{\lambda_{\ell}^p} = \sum_{\ell = 1}^L \frac{1}{\lambda_{\ell}^p} \le 2\left(\frac{\pi^2}{3}\right)^p\sum_{\ell = 1}^{L/2} \ell^{2p} \le 2\left(\frac{\pi^2}{3}\right)^p\left(\frac{\log n}{\log 2}\right)^{2p+1}.
\end{equation}

If $\beta > 1/2$, then
\begin{equation}\label{eq:BoundXi2Smaller}
\begin{split}
\sum_{\ell = 1}^{L} &\frac{2^{p\ell(1/2 - \beta)}}{\lambda_{\ell}^p}\\ &= \sum_{\ell = 1}^{L/2}\frac{2^{\ell(1/2 - \beta)p}}{\lambda_{\ell}^p} + 2^{(L + 1)(1/2 - \beta)p}\sum_{\ell = 1}^{L/2} \frac{1}{\lambda_{\ell}^{p}2^{\ell(1/2 - \beta)p}}\\
&\le \sum_{\ell = 1}^{L/2}\frac{2^{\ell(1/2 - \beta)p}}{\lambda_{\ell}^p} + (2n)^{(1/2 - \beta)p}\sum_{\ell = 1}^{L/2} \frac{1}{\lambda_{\ell}^{p}2^{\ell(1/2 - \beta)p}}\\
&\le 2^{(L+1)(1/2 - \beta)p}\sum_{\ell = 1}^{L/2}\frac{1}{\lambda_{\ell}^p2^{(L + 1 - \ell)(1/2 - \beta)p}} + (2n)^{(1/2 - \beta)p}\sum_{\ell = 1}^{L/2} \frac{1}{\lambda_{\ell}^{p}2^{\ell(1/2 - \beta)p}}\\
&\le (2n)^{(1/2 - \beta)p}\sum_{\ell = 1}^{L/2}\frac{1}{\lambda_{\ell}^p2^{\ell(1/2 - \beta)p}} + (2n)^{(1/2 - \beta)p}\sum_{\ell = 1}^{L/2} \frac{1}{\lambda_{\ell}^{p}2^{\ell(1/2 - \beta)p}}\\
&\le (2n)^{(1/2 - \beta)p}\left(\frac{5}{(1/2 - \beta)^3}\right)^p\left(\frac{\pi^2}{3}\right)^{p+1}.
\end{split}
\end{equation}
\end{enumerate}
Hence the result follows.
\end{proof}
Define the functions 
\begin{equation}\label{eq:ShiftAndTrunc}
s(\lambda) := (1/2 + 1/\lambda)^{-1},\quad\mbox{and}\quad T_1(\lambda) := \min\{\lambda, 1\}\quad\mbox{for all}\quad \lambda > 0. 
\end{equation}
\begin{thm}\label{thm:DependentSumsApp}
Suppose $Z_1, \ldots, Z_n$ are elements of the causal process \eqref{eq:CausalProcess} with mean zero. If for some $\alpha > 0$, and $\nu > 0$,
\begin{equation}\label{eq:AssumptionDependence}
\norm{\{Z\}}_{\psi_{\alpha}, \nu} = \sup_{p\ge 2}\,\sup_{m\ge 0}\, p^{-1/\alpha}(m+ 1)^{\nu}\Delta_{m,p}< \infty.
\end{equation}
Define
\[
\Omega_n(\nu) := 2^{\nu}\times\begin{cases}
5/(\nu - 1/2)^3, &\mbox{if }\nu > 1/2,\\
2(\log_2n)^{5/2}, &\mbox{if }\nu = 1/2,\\
5(2n)^{(1/2 - \nu)}/(1/2 - \nu)^3,&\mbox{if }\nu < 1/2.
\end{cases}
\]
Then for any $p\ge 2$, 
\begin{equation}\label{eq:MomentBound}
\norm{\sum_{i=1}^n Z_i}_p \le \sqrt{pn}\norm{\{Z\}}_{\psi_{\alpha},\nu}B_{\nu}
 + C_{\alpha}\norm{\{Z\}}_{\psi_{\alpha}, \nu}(\log n)^{1/s(\alpha)}p^{1/T_1(s(\alpha))}\Omega_n(\nu),
\end{equation}
where $C_{\alpha}$ is a constant depending only on $\alpha$, $B_{\nu}$ is a constant depending only on $\nu$ given by
\[
B_{\nu} := \sqrt{6}\left[1 + \frac{20\pi^32^{\nu}}{3\sqrt{3}\nu^3}\right],\quad\mbox{if}\quad \nu > 0.
\]
Furthermore, it follows by Markov's inequality that for all $t\ge 0$,
\begin{equation}\label{eq:TailBound}
\mathbb{P}\left(\left|\sum_{i=1}^n Z_i\right| \ge e\sqrt{tn}\norm{\{Z\}}_{2,\nu}B_{\nu} + C_{\alpha}\norm{\{Z\}}_{\psi_{\alpha},\nu}t^{1/T_1(s(\alpha))}(\log n)^{1/s(\alpha)}\Omega_n(\nu)\right) \le 8e^{-t}.
\end{equation}
Here $C_{\alpha}$ is different from the one in the moment bound~\eqref{eq:MomentBound}.
\end{thm}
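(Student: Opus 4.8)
The plan is to establish the moment bound \eqref{eq:MomentBound} first; the tail bound \eqref{eq:TailBound} then follows routinely by applying Markov's inequality to $\left|\sum_i Z_i\right|^p$ and evaluating the resulting estimate at $p$ of order $t$ (the factor $e$ and the constant $8$ absorbing this optimization). So the substance is the moment inequality, and for that I would build directly on the martingale/projection decomposition already exploited in the proof of Proposition~\ref{prop:Converse}. Write $S_n := \sum_{i=1}^n Z_i$ using the projection operators $\mathcal{P}_j(\cdot) := \mathbb{E}[\cdot\mid\mathcal{F}_j] - \mathbb{E}[\cdot\mid\mathcal{F}_{j-1}]$; the proof of Proposition~\ref{prop:Converse} supplies the crucial bound $\norm{\mathcal{P}_{i-m}Z_i}_r \le \delta_{m,r}$ for every lag $m$, since conditional expectation is an $L^r$ contraction.

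Next I would group the lag $m$ into dyadic scales $\ell = 1,\dots,L$ with $L := \lfloor \log n/\log 2\rfloor$, discarding lags beyond $2^L$ (these form a remainder controlled by $\Delta_{2^L,r}\le\Delta_{n,r}$, hence negligible under the decay built into \eqref{eq:AssumptionDependence}). This yields $S_n = \sum_{\ell=1}^L M_\ell + (\text{remainder})$, where at scale $\ell$ I consolidate the $n$ summands into $\approx n/2^\ell$ consecutive blocks of length $2^\ell$. Because each block sum is measurable at the block's right endpoint and has conditional mean zero given the coarsened past, the block sums form a martingale difference sequence with respect to the coarsened filtration, so Lemma~\ref{lem:MartingaleBounds} applies scale by scale: it bounds $\norm{M_\ell}_p$ by $\sqrt{p-1}$ times the $\ell^2$-aggregate of the block $L^p$-norms, each of which is in turn controlled by the scale-$\ell$ dependence mass $\Delta_{2^{\ell-1},r}$ through the projection bound above.

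The heart of the argument is the recombination across scales. Summing $\norm{S_n}_p \le \sum_{\ell}\norm{M_\ell}_p$ and splitting each scale contribution into an $L^2$ part and an $L^p$ part: the $L^2$ pieces involve $\Delta_{m,2}\le\norm{\{Z\}}_{2,\nu}(m+1)^{-\nu}$ and, after summing the resulting convergent $\ell$-series (this is Lemma~\ref{lem:SimpleCalculation}(a) with $\beta=\nu$, which is where the $\nu^{-3}$ and $2^\nu$ in $B_\nu$ originate), collapse into the Gaussian leading term proportional to $\sqrt{pn}\,\norm{\{Z\}}_{2,\nu}B_\nu$ — matching the sharp leading term of \eqref{eq:TailBound}, and bounded above by the $\norm{\{Z\}}_{\psi_\alpha,\nu}$ form of \eqref{eq:MomentBound} since $\norm{\{Z\}}_{2,\nu}\le 2^{1/\alpha}\norm{\{Z\}}_{\psi_\alpha,\nu}$. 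The $L^p$ remainder pieces are summed against the weights $\lambda_\ell$ using the weighted-sum estimates of Lemma~\ref{lem:SimpleCalculation}(b), and it is exactly the three regimes $\nu \gtrless 1/2$ there (with $\beta=\nu$) that produce the three cases defining $\Omega_n(\nu)$, including the logarithmic boundary case $\nu=1/2$ with its extra $(\log_2 n)^{5/2}$. Converting the $L^p$ dependence norms to the sub-Weibull scale via $\norm{\{Z\}}_{p,\nu}\le p^{1/\alpha}\norm{\{Z\}}_{\psi_\alpha,\nu}$ and combining this $p^{1/\alpha}$ with the $\sqrt{p-1}$ from Lemma~\ref{lem:MartingaleBounds} gives effective index $1/s(\alpha)=1/2+1/\alpha$; the $\approx\log_2 n$ scales then enter as $(\log n)^{1/s(\alpha)}$ and the final $p$-power becomes $p^{1/T_1(s(\alpha))}$ according to whether $s(\alpha)\lessgtr 1$. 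This is precisely the shift from $\alpha$ to $s(\alpha)$ flagged after Theorem~\ref{thm:LinearRepDependence} as the price of Rio's $L^p$ martingale inequality.

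I expect the main obstacle to be the bookkeeping that cleanly separates the Gaussian ($L^2$) leading term from the $L^p$ remainder while keeping the dependence on $p$ sharp enough to yield $p^{1/s(\alpha)}$ rather than a cruder power. Two points are delicate: (i) arranging the dyadic blocking so that the consolidated block sums at each scale are genuine martingale differences under a single coarsened filtration, so that Lemma~\ref{lem:MartingaleBounds} is legitimately applicable at every scale; and (ii) choosing the $L^2$/$L^p$ split at each scale and matching it to the weights $\lambda_\ell$ so that the $L^2$ contributions sum to the $p$-free constant $B_\nu$ while the $L^p$ contributions collapse exactly into $\Omega_n(\nu)$ through Lemma~\ref{lem:SimpleCalculation}. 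Tracking the numerous but harmless constants (the factors $e$, $\pi^2/3$, and $2^\nu$) and verifying the boundary regime $\nu=1/2$ are the remaining fiddly steps.
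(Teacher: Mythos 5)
Your overall architecture (dyadic decomposition over memory scales, a remainder for lags beyond $n$, recombination across scales with the weights $\lambda_{\ell}$ via Lemma~\ref{lem:SimpleCalculation}) matches the paper's proof, and your treatment of the remainder term and of the final recombination would go through. But there is a genuine gap at the heart of the argument: the step where you claim to split ``each scale contribution into an $L^2$ part and an $L^p$ part.'' The only tool you invoke at a fixed scale is Lemma~\ref{lem:MartingaleBounds} (Rio), and that inequality bounds the martingale's $L^p$-norm by $\sqrt{p-1}$ times an aggregate of \emph{$L^p$-norms} of the increments; no $L^2$ quantity ever appears on its right-hand side, so there is nothing to split. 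Converting those $L^p$-norms to the sub-Weibull scale via $\Delta_{m,p}\le p^{1/\alpha}(m+1)^{-\nu}\norm{\{Z\}}_{\psi_{\alpha},\nu}$ multiplies the \emph{entire} bound, including the $\sqrt{pn}$ factor, by $p^{1/\alpha}$. Thus a purely martingale route (projections $\mathcal{P}_{i-m}Z_i$ plus Rio, summed over lags) delivers a leading term of order $\sqrt{n}\,p^{1/2+1/\alpha}=\sqrt{n}\,p^{1/s(\alpha)}$, not the claimed Gaussian term $\sqrt{pn}\,B_{\nu}$ with a $p$-free constant, and correspondingly not the sub-Gaussian part $e\sqrt{tn}\norm{\{Z\}}_{2,\nu}B_{\nu}$ of the tail bound \eqref{eq:TailBound} --- which is precisely the feature the theorem is designed to provide. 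No bookkeeping with the weights $\lambda_{\ell}$ can repair this, because the deficiency is in the scale-by-scale bound itself, not in the recombination.

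The idea you are missing is the one the paper's proof turns on: the dyadic truncation differences $\mathcal{D}_{k,\ell}=Z_k^{(\xi_{\ell})}-Z_k^{(\xi_{\ell-1})}$, with $Z_k^{(m)}=\mathbb{E}[Z_k\mid\varepsilon_{k-m},\ldots,\varepsilon_k]$, are $\xi_{\ell}$-\emph{dependent}: $\mathcal{D}_{k,\ell}$ and $\mathcal{D}_{s,\ell}$ are genuinely independent once $|s-k|>\xi_{\ell}$, because they are functions of disjoint blocks of the iid innovations. This permits a blocking into alternating blocks of length $\xi_{\ell}$ so that the block sums inside $A_{n,\ell}$ (and inside $B_{n,\ell}$) are \emph{independent} random variables; Rio's inequality is used only \emph{within} a block, to verify that each block sum satisfies the sub-Weibull hypothesis with index $s(\alpha)$. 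Then Theorem~\ref{thm:IndependenceTailBound} --- the Bernstein/Adamczak-type bound for sums of independent sub-Weibull variables, which your proposal never invokes --- is applied to the independent block sums, and it is this inequality (not the martingale one) that produces two separate terms: a variance term, controlled through $\norm{\{Z\}}_{2,\nu}$ and carrying only $\sqrt{p}$, and a $\psi_{s(\alpha)}$ term carrying $(\log n)^{1/s(\alpha)}p^{1/T_1(s(\alpha))}$. The same theorem also handles $S_{n,0}=\sum_i\mathbb{E}[Z_i\mid\varepsilon_i]$, which is an honest independent sum, unlike your lag-zero projections $\mathcal{P}_iZ_i$, which are only martingale differences. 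In short, to obtain the stated Gaussian/sub-Weibull separation one must at some point reduce to sums of \emph{independent} variables and apply an inequality that distinguishes variances from tails; martingale $L^p$ bounds alone cannot do it, unless you import a Freedman-type martingale Bernstein inequality that the paper does not provide.
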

\begin{proof}
Define
\[
S_n := \sum_{i=1}^n Z_i,\quad L = \left\lfloor\frac{\log n}{\log 2}\right\rfloor,\quad\mbox{and}\quad \xi_{\ell} =\begin{cases} 2^{\ell},&\mbox{if }0\le \ell < L,\\n,&\mbox{if }\ell = L.\end{cases}
\]
Define for $m\ge 0$,
\begin{equation}\label{eq:DefinitionZM}
Z_i^{(m)} := \mathbb{E}\left[Z_i\big|\varepsilon_{i - m}, \ldots, \varepsilon_i\right],\quad\mbox{and}\quad M_{i,\ell} := \sum_{k = 1}^i \left(Z_{k}^{(\xi_{\ell})} - Z_{k}^{(\xi_{\ell - 1})}\right).
\end{equation}
Let
\[
S_{n,m} := \sum_{i = 1}^n Z_i^{(m)},
\]
and consider the decomposition
\begin{equation}\label{eq:Decomposition}
S_n = S_{n,0} + \left(S_n - S_{n,n}\right) + \sum_{\ell = 1}^L \left(S_{n,\xi_{\ell}} - S_{n,\xi_{\ell - 1}}\right) := \mathbf{I} + \mathbf{II} + \mathbf{III}.
\end{equation}
We prove the moment bound~\eqref{eq:MomentBound} by bounding the moments of each term in the decomposition~\eqref{eq:Decomposition}. 

\emph{Bounding $\mathbf{I}$}: Regarding the first term $\mathbf{I}$, observe that $S_{n,0}$ is a sum of independent random variables $Z_i^{(0)}$ satisfying the tail assumption of Theorem~\ref{thm:IndependenceTailBound} with $\beta = \alpha$. This verification follows by noting that
\[
\norm{Z_i^{(0)}}_p \overset{(a)}{\le} \norm{Z_i}_p \overset{(b)}{\le} \norm{\{Z\}}_{p,\nu} \overset{(c)}{\le} p^{1/\alpha}\norm{\{Z\}}_{\psi_{\alpha},\nu}.
\] 
Inequality (a) follows from Jensen's inequality, (b) follows from Proposition~\ref{prop:Converse} and (c) follows from assumption~\eqref{eq:AssumptionDependence}. Hence, we get that for any $p\ge 1$,
\begin{equation*}
\begin{split}
\norm{\mathbf{I}}_p &= \norm{\sum_{i=1}^n \mathbb{E}\left[Z_i\big|\varepsilon_i\right]}_p\\
&\le \sqrt{6p}\left(\sum_{i=1}^n \mathbb{E}\left[Z_i^2\right]\right)^{1/2} + C_{\alpha}\norm{\{Z\}}_{\psi_{\alpha},\nu}p^{1/T_1(\alpha)}\left(\log n\right)^{1/\alpha},
\end{split}
\end{equation*}
for some constant $C_{\alpha}$ depending only on $\alpha$. Here Jensen's inequality is used to bound the variance of $\mathbb{E}\left[Z_i\big|\varepsilon_i\right]$. By Proposition~\ref{prop:Converse}, $\norm{Z_i}_2 \le \norm{\{Z\}}_{2,\nu}$ and hence
\begin{equation}\label{eq:FirstTermBound}
\norm{S_{n,0}}_p \le \sqrt{6pn}\norm{\{Z\}}_{2,\nu} + C_{\alpha}\norm{\{Z\}}_{\psi_{\alpha},\nu}p^{1/T_1(\alpha)}\left(\log n\right)^{1/\alpha}.
\end{equation}

\emph{Bounding $\mathbf{II}$}: For the second term, note that
\[
S_n = \sum_{i=1}^n Z_i = \sum_{i=1}^n \mathbb{E}\left[Z_i\big|\varepsilon_i, \varepsilon_{i-1}, \ldots\right] = S_{n,\infty},
\]
and hence,
\[
S_n - S_{n,n} = \sum_{m = n}^{\infty}\left(S_{n,m+1} - S_{n,m}\right).
\]
Substituting the definition of $S_{n,m}$, we have
\[
S_{n,m+1} - S_{n,m} = \sum_{k = 1}^n \left(\mathbb{E}\left[Z_k\big|\varepsilon_k, \ldots, \varepsilon_{k-m-1}\right] - \mathbb{E}\left[Z_k\big|\varepsilon_k, \ldots, \varepsilon_{k-m}\right]\right).
\]
We now prove that the summands above form a martingale difference sequence with respect to a filtration. The following construction is taken from the proof of Lemma 1 of \cite{Liu10}. Define
\[
D_{k,m+1} := \mathbb{E}\left[Z_k\big|\varepsilon_k, \ldots, \varepsilon_{k-m-1}\right] - \mathbb{E}\left[Z_k\big|\varepsilon_k, \ldots, \varepsilon_{k-m}\right],
\]
and the non-decreasing filtration
\[
\mathcal{G}_{k, m+1} := \sigma\left(\varepsilon_{k-m-1}, \varepsilon_{k-m-1}, \ldots\right). 
\]
It is easy to see that
\begin{equation}\label{eq:MDSProof}
\mathbb{E}\left[D_{n-k+1, m+1}\big|\mathcal{G}_{k-1, m+1}\right] = 0.
\end{equation}
Therefore, $\{(D_{n-k+1,m+1}, \mathcal{G}_{k,m+1}):\, 1\le k\le n\}$ forms a martingale difference sequence. This implies that $S_{n,m+1} - S_{n,m}$ is a martingale and hence by Lemma \ref{lem:MartingaleBounds} we get for $p\ge 2$,
\[
\norm{S_{n,m+1} - S_{n,m}}_p^2 \le p\sum_{k = 1}^n \norm{D_{k,m+1}}_p^2.
\]
To further bound the right hand side, note that for $p\ge 2$,
\begin{equation}\label{eq:MDSBounds}
\norm{D_{k,m+1}}_p = \norm{\mathbb{E}\left[Z_k - g(\ldots, \varepsilon_{k-m-1}', \varepsilon_{k-m}, \ldots, \varepsilon_k)\big|\varepsilon_k, \ldots, \varepsilon_{k-m-1}\right]}_p \le \delta_{m+1, p}.
\end{equation}
Hence, for $p\ge 2$,
\[
\norm{S_{n,m+1} - S_{n,m}}_p \le \sqrt{pn}\delta_{m+1,p},
\]
and
\begin{equation}\label{eq:StopPolynomial2}
\norm{S_n - S_{n,n}}_p \le \sum_{m = n}^{\infty} \norm{S_{n,m+1} - S_{n,m}}_p \le \sqrt{pn}\sum_{m = n}^{\infty} \delta_{m+1, p} = \sqrt{pn}\Delta_{n+1, p}.
\end{equation}
Under assumption \eqref{eq:AssumptionDependence}, we obtain
\begin{equation}\label{eq:SecondTermBound}
\norm{\mathbf{II}}_p = \norm{S_n - S_{n,n}}_p \le \norm{\{Z\}}_{\psi_{\alpha},\nu}\frac{{n}^{1/2}p^{1/2 + 1/\alpha}}{(n + 2)^{\nu}} = \norm{\{Z\}}_{\psi_{\alpha},\nu}n^{1/2 - \nu}p^{1/2 + 1/\alpha}.
\end{equation}

\emph{Bounding $\mathbf{III}$}: To bound $\mathbf{III}$, note by definition of $M_{i,\ell}$ that
\[
\mathbf{III} = \sum_{\ell = 1}^L \sum_{k = 1}^n \left(Z_k^{(\xi_{\ell})} - Z_k^{(\xi_{\ell - 1})}\right) = \sum_{\ell = 1}^L M_{n,\ell}.
\]
Now observe that the summands of $M_{n,\ell}$,
$$\mathcal{D}_{k, \ell} := \left(Z_{k}^{(\xi_{\ell})} - Z_k^{(\xi_{\ell - 1})}\right),$$
are $\xi_{\ell}$-dependent in the sense that $\mathcal{D}_{k,\ell}$ and $\mathcal{D}_{s, \ell}$ are independent if $|s - k| > \xi_{\ell}$. This can be proved as follows. By definition $\mathcal{D}_{k,\ell}$ is only a function of $(\varepsilon_k, \ldots, \varepsilon_{k - \xi_{\ell}})$ and by independence of $\varepsilon_k, k\in\mathbb{Z}$, the claim follows. Now a blocking technique can be used to convert $M_{n,\ell}$ into a sum of independent variables. See Corollary A.1 of \cite{RomanoWolf00} for a similar use. Define
\begin{align*}
\mathcal{A}_{\ell} &:= \left\{2\xi_{\ell}i + j:\,i\in\mathbb{Z},\,1\le j\le \xi_{\ell}\right\},\\
\mathcal{B}_{\ell} &:= \left\{2\xi_{\ell}i + \xi_{\ell} + j:\,i\in\mathbb{Z},\,1\le j\le \xi_{\ell}\right\}.
\end{align*}
Consider the decomposition of $M_{n,\ell}$ as
\[
M_{n,\ell} = \sum_{k = 1}^n \mathcal{D}_{k,\ell} = A_{n,\ell} + B_{n,\ell},
\]
where
\begin{align*}
A_{n,\ell} := \sum_{1\le k\le n, k\in\mathcal{A}} \mathcal{D}_{k,\ell} \quad\mbox{and}\quad B_{n,\ell} := \sum_{1\le k\le n, k\in\mathcal{B}} \mathcal{D}_{k,\ell}.
\end{align*}
We now provide moment bounds for $M_{n,\ell}$ by giving moment bounds for $A_{n,\ell}$ and $B_{n,\ell}$ which is in turn done by separating the summands of $A_{n,\ell}$ and $B_{n,\ell}$ to form an independent sum. Note that
\begin{equation}\label{eq:AnellDecomposition}
\begin{split}
A_{n,\ell} &= \sum_{i = 1}^{\left\lfloor \frac{n}{2\xi_{\ell}}\right\rfloor}\left(\sum_{j = 1}^{\xi_{\ell}} \mathcal{D}_{2\xi_{\ell}i + j,\ell}\right) = \sum_{i = 1}^{\left\lfloor \frac{n}{2\xi_{\ell}}\right\rfloor} \left(\sum_{k = 2\xi_{\ell}i + 1}^{2\xi_{\ell}i + \xi_{\ell}} \left(Z_k^{(\xi_{\ell})} - Z_k^{(\xi_{\ell - 1})}\right)\right)\\
&= \sum_{i = 1}^{\left\lfloor \frac{n}{2\xi_{\ell}}\right\rfloor} \left(M_{2\xi_{\ell}i + \xi_{\ell}, \ell} - M_{2\xi_{\ell}i,\ell}\right).
\end{split}
\end{equation}
By the $\xi_{\ell}$-independence of the summands of $M_{n,\ell}$, we get that the summands in the final representation of $A_{n,\ell}$ are independent and so Theorem~\ref{thm:IndependenceTailBound} applies. In the following, we verify the assumption of Theorem~\ref{thm:IndependenceTailBound}. 
For $1\le i < j\le n$, it is clear that
\begin{align*}
M_{j,\ell} - M_{i,\ell} &= \sum_{k = i+1}^j \left(Z_k^{(\xi_{\ell})} - Z_k^{(\xi_{\ell - 1})}\right)\\
&= \sum_{k = i+1}^j \left(\sum_{t = 1 + \xi_{\ell - 1}}^{\xi_{\ell}} \left(Z_k^{\xi_{\ell}} - Z_k^{(\xi_{\ell - 1})}\right)\right)\\
&= \sum_{t = 1 + \xi_{\ell - 1}}^{\xi_{\ell}}\left(\sum_{k = i + 1}^{j} \left(Z_k^{(t)} - Z_k^{(t - 1)}\right)\right).
\end{align*}
By triangle inequality
\begin{equation}\label{eq:DifferenceMnBounds}
\norm{M_{j,\ell} - M_{i,\ell}}_p \le \sum_{t = 1 + \xi_{\ell - 1}}^{\xi_{\ell}} \norm{\sum_{k = i+1}^j \left(Z_k^{(t)} - Z_k^{(t - 1)}\right)}_p.
\end{equation}
As proved in \eqref{eq:MDSProof}, the summation for each $t$ represents a martingale and hence by Lemma \ref{lem:MartingaleBounds}, we get for $p \ge 2$ that
\[
\norm{\sum_{k = i+1}^j \left(Z_k^{(t)} - Z_k^{(t - 1)}\right)}_p^2 \le p\sum_{k = i+1}^j \norm{Z_k^{(t)} - Z_k^{(t - 1)}}_p^2 \le p\sum_{k = i + 1}^j \delta_{t,p}^2 = p(j - i)\delta_{t,p}^2.
\]
Here we used inequality \eqref{eq:MDSBounds}. Substituting this in inequality \eqref{eq:DifferenceMnBounds} and using $\xi_{\ell - 1} \ge \xi_{\ell}/2$, we get
\begin{equation}\label{eq:pMomentofDiffM}
\begin{split}
\norm{M_{j,\ell} - M_{i,\ell}}_p &\le p^{1/2}(j-i)^{1/2}\sum_{t = 1 + \xi_{\ell - 1}}^{\xi_{\ell}} \delta_{t, p} \le p^{1/2}(j-i)^{1/2}\Delta_{1 + \xi_{\ell - 1}, p}\\
&\le \norm{\{Z\}}_{p,\nu}p^{1/2}(j- i)^{1/2}(2 + \xi_{\ell - 1})^{-\nu}\\ &\le 2^{\nu}\norm{\{Z\}}_{p,\nu}p^{1/2}(j- i)^{1/2}\xi_{\ell}^{-\nu}.
\end{split}
\end{equation}
Under assumption \eqref{eq:AssumptionDependence}, we get
\begin{equation}\label{eq:SubstituteAssumptionpMomentofDiffM}
\begin{split}
\norm{M_{j,\ell} - M_{i,\ell}}_p &\le 2^{\nu}\norm{\{Z\}}_{\psi_{\alpha}, \nu}p^{1/2 + 1/\alpha}(j-i)^{1/2}\xi_{\ell}^{-\nu}\\
&= 2^{\nu}\norm{\{Z\}}_{\psi_{\alpha}, \nu}p^{1/s(\alpha)}(j-i)^{1/2}\xi_{\ell}^{-\nu}.
\end{split}
\end{equation}
See~\eqref{eq:ShiftAndTrunc} for the definition of $s(\alpha)$. Thus, for all $1\le i\le \lfloor \frac{n}{2\xi_{\ell}}\rfloor$,
\begin{equation}\label{eq:VerificationTailAssumptionIndependence}
\sup_{p\ge 2}\,p^{- 1/s(\alpha)}\norm{M_{2\xi_{\ell}i+\xi_{\ell},\ell} - M_{2\xi_{\ell}i,\ell}}_p \le 2^{\nu}\norm{\{Z\}}_{\psi_{\alpha},\nu}{\xi_{\ell}^{1/2 - \nu}}.
\end{equation}
So, the summands of $A_{n,\ell}$ in the final representation in \eqref{eq:AnellDecomposition} are independent and satisfy the hypothesis of Theorem~\ref{thm:IndependenceTailBound} with $\beta = s(\alpha)$. Therefore, for $p\ge 2$,
\begin{align*}
\norm{A_{n,\ell}}_p &\le \sqrt{6p}\left(\sum_{i = 1}^{\lfloor n/(2\xi_{\ell})\rfloor} \norm{M_{2\xi_{\ell}i + \xi_{\ell}, \ell} - M_{2\xi_{\ell}i, \ell}}_2^2\right)^{1/2}\\ &\qquad+ C_{\alpha}2^{\nu}\norm{\{Z\}}_{\psi_{\alpha}, \nu}\left(\log n\right)^{1/s(\alpha)}{\xi_{\ell}^{1/2 - \nu}}p^{1/T_1(s(\alpha))}\\
&\le \sqrt{12p}\norm{\{Z\}}_{2, \nu}\frac{2^{\nu}\xi_{\ell}^{1/2}}{\xi_{\ell}^{\nu}}\left(\frac{n}{2\xi_{\ell}}\right)^{1/2}\\
&\qquad+ C_{\alpha}2^{\nu}\norm{\{Z\}}_{\psi_{\alpha}, \nu}\left(\log n\right)^{1/s(\alpha)}{\xi_{\ell}^{1/2 - \nu}}p^{1/T_1(s(\alpha))}\\
&\le \frac{2^{\nu}}{\xi_{\ell}^{\nu}}\left[\norm{\{Z\}}_{2,\nu}\sqrt{6pn} + C_{\alpha}\norm{\{Z\}}_{\psi_{\alpha},\nu}p^{1/T_1(s(\alpha))}\left(\log n\right)^{1/s(\alpha)}\xi_{\ell}^{1/2}\right].
\end{align*}
Here the second inequality follows from \eqref{eq:pMomentofDiffM}.

Similarly a representation for $B_{n,\ell}$ exists with independent summands satisfying the assumption of Theorem~\ref{thm:IndependenceTailBound} with $\beta = s(\alpha)$ and so,
\[
\norm{B_{n,\ell}}_p \le \frac{2^{\nu}}{\xi_{\ell}^{\nu}}\left[\norm{\{Z\}}_{2,\nu}\sqrt{6pn} + C_{\alpha}\norm{\{Z\}}_{\psi_{\alpha},\nu}p^{1/T_1(s(\alpha))}\left(\log n\right)^{1/s(\alpha)}\xi_{\ell}^{1/2}\right].
\]
Combining the bounds for $A_{n,\ell}$ and $B_{n,\ell}$ implies the bound on $M_{n,\ell}$ as
\begin{equation}\label{eq:MnellBound}
\norm{M_{n,\ell}}_p \le \frac{2^{1 + \nu}}{\xi_{\ell}^{\nu}}\left[\norm{\{Z\}}_{2,\nu}\sqrt{6pn} + C_{\alpha}\norm{\{Z\}}_{\psi_{\alpha},\nu}p^{1/T_1(s(\alpha))}\left(\log n\right)^{1/s(\alpha)}\xi_{\ell}^{1/2}\right].
\end{equation}
To complete bounding $\mathbf{III}$, we need to bound the moments of the sum of $M_{n,\ell}$ over $1\le \ell \le L$ which are all dependent. For this, define the sequence
\[
\lambda_{\ell} = \begin{cases}
3\pi^{-2}\ell^{-2},&\mbox{if }1\le \ell \le L/2,\\
3\pi^{-2}(L + 1 - \ell)^{-2},&\mbox{if }L/2 < \ell \le L.
\end{cases}
\]
This positive sequence satisfies $\sum_{\ell = 1}^L \lambda_{\ell} < 1$. It is easy to derive from H\''older's inequality that
\begin{equation}\label{eq:HolderImplication}
\left|\sum_{\ell = 1}^L a_{\ell}\right|^p \le \sum_{\ell = 1}^L \frac{|a_{\ell}|^p}{\lambda_{\ell}^p}.
\end{equation}
Substituting in this inequality $a_{\ell} = M_{n,\ell}$ and the moment bound \eqref{eq:MnellBound}, we get
\begin{align*}
\mathbb{E}\left[\left|\sum_{\ell = 1}^L M_{n,\ell}\right|^p\right] &\le 2^{(2 + \nu)p}\norm{\{Z\}}_{2,\nu}^p(6pn)^{p/2}\sum_{\ell = 1}^L \frac{1}{\lambda_{\ell}^p\xi_{\ell}^{p\nu}}\\ &\quad+ C_{\alpha}^p2^{(2 + \nu)p}\norm{\{Z\}}_{\psi_{\alpha},\nu}^pp^{p/T_1(s(\alpha))}\left(\log n\right)^{p/s(\alpha)}\sum_{\ell = 1}^L \frac{\xi_{\ell}^{p/2}}{\lambda_{\ell}^p\xi_{\ell}^{p\nu}}.
\end{align*}
It follows from Lemma~\ref{lem:SimpleCalculation} and the definition of $\Omega_n(\nu)$ that for $p\ge 2$,
\begin{equation}\label{eq:ThirdTermBound}
\begin{split}
&\norm{\sum_{\ell = 1}^L M_{n,\ell}}_p\\ &\qquad\le \frac{5\pi^32^{2}}{3\sqrt{3}}\left[\frac{2^{\nu}\norm{\{Z\}}_{2,\nu}\sqrt{6pn}}{\nu^3} + C_{\alpha}\norm{\{Z\}}_{\psi_{\alpha}, \nu}(\log n)^{1/s(\alpha)}\Omega_n(\nu)p^{1/T_1(s(\alpha))}\right].
\end{split}
\end{equation}
Combining the moment bounds \eqref{eq:FirstTermBound}, \eqref{eq:SecondTermBound} and \eqref{eq:ThirdTermBound}, it follows that for $p\ge 2$,
\begin{align*}
\norm{S_n}_p &\le \sqrt{6pn}\norm{\{Z\}}_{\psi_{\alpha},\nu}\left[1 + \frac{20\pi^32^{\nu}}{3\sqrt{3}\nu^3}\right] + \norm{\{Z\}}_{\psi_{\alpha}, \nu}n^{1/2 - \nu}p^{1/s(\alpha)}\\
&\quad+ C_{\alpha}\norm{\{Z\}}_{\psi_{\alpha}, \nu}(\log n)^{1/s(\alpha)}p^{1/T_1(s(\alpha))}\Omega_n(\nu).
\end{align*}
Here the inequalities $s(\alpha) \le \alpha$ and $T_1(s(\alpha)) \le T_1(\alpha)$ are used. Now noting that $\Omega_n(\nu) \ge n^{1/2 - \nu}$ for all $\nu > 0$ and $p^{1/s(\alpha)} \le p^{1/T_1(s(\alpha))}$, the result follows.
\end{proof}
In the following two lemmas, we prove that the dependent adjusted norm of linear combinations and products of functionally dependent random variables can be bounded in terms of the individual processes. Recall the definition of $\Theta_k$ from~\eqref{eq:kSparseSubsetDef}.
\begin{lem}\label{lem:LinearCombinations}
Suppose Assumption~\ref{eq:Dependent} holds, then for any $\theta\in\Theta_k$,
\[
\sup_{\theta\in\Theta_k}\norm{\{\theta^{\top}X\}}_{r,\nu} \le k^{1/2}K_{n,p}.
\]
\end{lem}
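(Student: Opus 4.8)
The plan is to regard $\theta^{\top}X$ as a scalar causal process in its own right and to show that the dependence-adjusted norm $\norm{\{\cdot\}}_{r,\nu}$ is sub-additive over coordinates, after which a single Cauchy--Schwarz step over the ($k$-sparse) support of $\theta$ produces the factor $k^{1/2}$. The starting observation is that if $X_i(j) = g_{ij}(\mathcal{F}_i)$, then $V_i := \theta^{\top}X_i = \sum_j \theta(j) g_{ij}(\mathcal{F}_i)$ is again of the causal form~\eqref{eq:CausalProcessVector}, and its coupled version is exactly $V_{i,s} = \theta^{\top}X_{i,s}$.

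First I would bound the functional dependence measure of $V$. By the triangle inequality for $\norm{\cdot}_r$ and then moving the maximum over $i$ inside the finite sum over coordinates,
\[
\delta_{s,r}^{(V)} = \max_{1\le i\le n}\norm{\theta^{\top}(X_i - X_{i,s})}_r \le \sum_{j}|\theta(j)|\max_{1\le i\le n}\norm{X_i(j) - X_{i,s}(j)}_r = \sum_{j}|\theta(j)|\,\delta_{s,r,j}.
\]
Summing over $s\ge m$ gives $\Delta_{m,r}^{(V)} \le \sum_j |\theta(j)|\,\Delta_{m,r,j}$; multiplying by $(m+1)^{\nu}$ and bounding each $(m+1)^{\nu}\Delta_{m,r,j}$ by $\norm{\{X(j)\}}_{r,\nu}$ yields an estimate that is uniform in $m$. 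Taking the supremum over $m$ then gives the key sub-additivity inequality
\[
\norm{\{\theta^{\top}X\}}_{r,\nu} \le \sum_{j}|\theta(j)|\,\norm{\{X(j)\}}_{r,\nu}.
\]

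To finish, I would restrict the sum to the support $M$ of $\theta$, with $|M|\le k$, and apply Cauchy--Schwarz together with $\norm{\theta}_2 \le 1$:
\[
\sum_{j\in M}|\theta(j)|\,\norm{\{X(j)\}}_{r,\nu} \le \norm{\theta}_2\Big(\sum_{j\in M}\norm{\{X(j)\}}_{r,\nu}^2\Big)^{1/2} \le k^{1/2}\max_{1\le j\le p}\norm{\{X(j)\}}_{r,\nu}.
\]
Since the coordinates of $X$ are a subset of those of $W=(X,Y)$, we have $\max_{1\le j\le p}\norm{\{X(j)\}}_{r,\nu} \le \norm{\{W\}}_{r,\nu}$, which is controlled by Assumption~\ref{eq:Dependent}; as the resulting bound is free of $\theta$, it holds uniformly over $\theta\in\Theta_k$, giving the claim.

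The one genuinely delicate point is the interchange of the supremum over $m$ in the definition of $\norm{\{\cdot\}}_{r,\nu}$ with the infinite sum over the shift $s$ and the finite sum over coordinates $j$; this is precisely what the uniform-in-$m$ estimate above secures, and it amounts to verifying that $\norm{\{\cdot\}}_{r,\nu}$ is a seminorm on causal processes. Everything else is the triangle inequality and a single Cauchy--Schwarz step, so I expect no further obstacles.
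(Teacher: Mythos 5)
Your proposal is correct and follows essentially the same route as the paper's proof: bound the functional dependence measure of $\theta^{\top}X$ by $\sum_j|\theta(j)|\,\delta_{s,r,j}$ via the triangle inequality, sum over shifts, and use $k$-sparsity with $\norm{\theta}_2\le 1$ to extract the factor $k^{1/2}$. The only cosmetic difference is that you apply Cauchy--Schwarz on the support of $\theta$ where the paper uses $\norm{\theta}_1\le \norm{\theta}_0^{1/2}\norm{\theta}_2\le k^{1/2}$ followed by an $\ell_1$--$\ell_\infty$ bound; these are the same sparsity argument, and both proofs conclude by invoking Assumption~\ref{eq:Dependent} in exactly the same way.
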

\begin{proof}
Fix $\theta\in\Theta_k$. Set the functional dependence measure~\eqref{eq:FunctionalDepMeasure} for the linear combination $\theta^{\top}X$ as
\[
\delta_{s, r}^{(L)} := \max_{1\le i\le n}\norm{\theta^{\top}X_i - \theta^{\top}X_{i,s}}_r.
\]
Note that $\theta\in\Theta_k$ are all $k$-sparse and so there are only $k$ non-zero coordinates $\theta(j)$ of $\theta$. Since the functional dependence measure is a norm, it follows that
\begin{align*}
\delta_{s, r}^{(L)} &= \max_{1\le i\le n}\sum_{j = 1}^p |\theta(j)|\norm{X_i(j) - X_{i,s}(j)}_r\\
&\le \sum_{j = 1}^p |\theta(j)|\max_{1\le i\le n}\norm{X_i(j) - X_{i,s}(j)}_r = \sum_{j = 1}^p |\theta(j)|\delta_{s, r, j}.
\end{align*}
Hence for $m\ge 0,$
\[
\Delta_{m, r}^{(L)} := \sum_{s = m}^{\infty} \delta_{s, r}^{(L)} \le \sum_{s = m}^{\infty} \sum_{j = 1}^p |\theta(j)|\delta_{s, r, j} = \sum_{j = 1}^p |\theta(j)|\left(\sum_{s = m}^{\infty} \delta_{s, r, j}\right) = \sum_{j = 1}^p |\theta(j)|\Delta_{m, r, j}.
\]
This implies that
\[
\Delta_{m,r}^{(L)} \le \norm{\theta}_1\max_{1\le j\le p}\Delta_{m, r, j} \le k^{1/2}\max_{1\le j\le p}\Delta_{m, r ,j}.
\]
Therefore, for $r\ge 1$ and $\nu > 0$,
\[
\norm{\{\theta^{\top}X\}}_{r, \nu} \le k^{1/2}\norm{\{X\}}_{r, \nu}\quad\Rightarrow\quad \norm{\{\theta^{\top}X\}}_{\psi_{\alpha}, \nu} \le k^{1/2}\norm{\{X\}}_{\psi_{\alpha}, \nu} \le k^{1/2}K_{n,p},
\]
proving the result.
\end{proof}
\begin{lem}\label{lem:ProductProcess}
Suppose $(W_1^{(1)}, W_1^{(2)}), \ldots, (W_n^{(1)}, W_n^{(2)})$ are $n$ functionally dependent real-valued random vectors. Set $W_i = W_i^{(1)}W_i^{(2)}$ for $1\le i\le n$. Then for all $r\ge 2$ and $\nu > 0$
\begin{align*}
\norm{\{W\}}_{r/2,\nu} &\le \norm{\{W^{(1)}\}}_{r,0}\norm{\{W^{(2)}\}}_{r,\nu} + \max_{1\le i\le n}\left|\mathbb{E}\left[W_i^{(1)}\right]\right|\norm{\{W^{(2)}\}}_{r,\nu}\\ &\quad+ \norm{\{W^{(2)}\}}_{r,0}\norm{\{W^{(1)}\}}_{r,\nu} + \max_{1\le i\le n}\left|\mathbb{E}\left[W_i^{(2)}\right]\right|\norm{\{W^{(1)}\}}_{r,\nu}.
\end{align*}
\end{lem}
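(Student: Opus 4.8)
The plan is to work directly with the functional dependence measure of the product process $W_i = W_i^{(1)}W_i^{(2)}$ and reduce everything to the dependence measures of the two factor processes by a product-rule telescoping combined with H\"older's inequality. Throughout, write $\delta_{s,r}^{(j)} := \max_{1\le i\le n}\norm{W_i^{(j)} - W_{i,s}^{(j)}}_r$ and $\Delta_{m,r}^{(j)} := \sum_{s=m}^{\infty}\delta_{s,r}^{(j)}$ for $j = 1,2$, so that $\norm{\{W^{(j)}\}}_{r,\nu} = \sup_{m\ge 0}(m+1)^{\nu}\Delta_{m,r}^{(j)}$, and analogously $\delta_{s,r/2}^{(W)}$, $\Delta_{m,r/2}^{(W)}$ for the product. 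Here $W_{i,s}^{(j)}$ denotes the coupled version of $W_i^{(j)}$ obtained by replacing $\varepsilon_{i-s}$ with its independent copy $\varepsilon_{i-s}'$, and $W_{i,s} = W_{i,s}^{(1)}W_{i,s}^{(2)}$.

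First I would fix $1\le i\le n$ and $s\ge 0$ and use the elementary identity
\[
W_i^{(1)}W_i^{(2)} - W_{i,s}^{(1)}W_{i,s}^{(2)} = W_i^{(1)}\bigl(W_i^{(2)} - W_{i,s}^{(2)}\bigr) + \bigl(W_i^{(1)} - W_{i,s}^{(1)}\bigr)W_{i,s}^{(2)},
\]
which splits the increment of the product into a piece governed by the dependence of the second factor and one governed by the first. Applying H\"older's inequality in the form $\norm{AB}_{r/2}\le \norm{A}_r\norm{B}_r$ (conjugate exponents $2,2$) to each term gives
\[
\norm{W_i - W_{i,s}}_{r/2} \le \norm{W_i^{(1)}}_r\,\norm{W_i^{(2)} - W_{i,s}^{(2)}}_r + \norm{W_{i,s}^{(2)}}_r\,\norm{W_i^{(1)} - W_{i,s}^{(1)}}_r.
\]
The two dependence increments are bounded by definition by $\delta_{s,r}^{(2)}$ and $\delta_{s,r}^{(1)}$, respectively.

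Next I would control the two prefactors by centering. For $\norm{W_i^{(1)}}_r$, split $\norm{W_i^{(1)}}_r \le \norm{W_i^{(1)} - \mathbb{E}[W_i^{(1)}]}_r + |\mathbb{E}[W_i^{(1)}]|$ and apply Proposition~\ref{prop:Converse} to the mean-zero centered process, whose functional dependence measure coincides with that of $W^{(1)}$ since the deterministic centering cancels in the difference $W_i - W_{i,s}$; this yields $\norm{W_i^{(1)}}_r \le \norm{\{W^{(1)}\}}_{r,0} + \max_{1\le i\le n}|\mathbb{E}[W_i^{(1)}]|$. The one point requiring care is the prefactor $\norm{W_{i,s}^{(2)}}_r$: here I would observe that, because the $\varepsilon_j'$ are i.i.d.\ copies, the coupled variable $W_{i,s}^{(2)}$ has the same marginal law as $W_i^{(2)}$, so both its $r$-norm and its mean agree with those of $W_i^{(2)}$, and the same centering argument gives $\norm{W_{i,s}^{(2)}}_r \le \norm{\{W^{(2)}\}}_{r,0} + \max_{1\le i\le n}|\mathbb{E}[W_i^{(2)}]|$.

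Combining these estimates and taking $\max_{1\le i\le n}$ produces a pointwise-in-$s$ bound $\delta_{s,r/2}^{(W)} \le c_1\,\delta_{s,r}^{(2)} + c_2\,\delta_{s,r}^{(1)}$ with the two constants $c_1 = \norm{\{W^{(1)}\}}_{r,0} + \max_i|\mathbb{E}[W_i^{(1)}]|$ and $c_2 = \norm{\{W^{(2)}\}}_{r,0} + \max_i|\mathbb{E}[W_i^{(2)}]|$ both independent of $s$. Summing over $s\ge m$ converts the $\delta$'s into the tail sums $\Delta_{m,r}^{(j)}$, and then multiplying by $(m+1)^{\nu}$ and taking $\sup_{m\ge 0}$ turns the $\Delta$'s into $\norm{\{W^{(j)}\}}_{r,\nu}$ while leaving the $s$-free constants $c_1,c_2$ untouched; this is precisely the claimed inequality. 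I do not expect a genuine obstacle: beyond the product rule, H\"older, and the monotone passage from $\delta$ to $\Delta$ to the $(\cdot)_{r,\nu}$ norm, the only subtle bookkeeping step is recognizing that coupling preserves the marginal distribution, so that $\norm{W_{i,s}^{(2)}}_r$ can be treated identically to $\norm{W_i^{(2)}}_r$.
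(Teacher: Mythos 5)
Your proposal is correct and follows essentially the same route as the paper's proof: the same product-rule splitting $W_i^{(1)}(W_i^{(2)} - W_{i,s}^{(2)}) + (W_i^{(1)} - W_{i,s}^{(1)})W_{i,s}^{(2)}$, the same H\"older bound $\norm{AB}_{r/2}\le\norm{A}_r\norm{B}_r$, the same observation that coupling preserves the marginal law so $\norm{W_{i,s}^{(2)}}_r = \norm{W_i^{(2)}}_r$, and the same centering argument via Proposition~\ref{prop:Converse} to bound $\max_i\norm{W_i^{(j)}}_r$ by $\norm{\{W^{(j)}\}}_{r,0} + \max_i|\mathbb{E}[W_i^{(j)}]|$. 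The only difference is immaterial bookkeeping (you bound the prefactors before summing over $s$, the paper after), so nothing further is needed.
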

\begin{proof}
Set for $j = 1,2,$
\[
\delta_{s,r}^{(j)} := \norm{W_i^{(1)} - W_{i,s}^{(1)}}_r,\quad\mbox{and}\quad \Delta_{m,r}^{(j)} := \sum_{s = m}^{\infty} \delta_{s,r}^{(j)}.
\]
Fix $1\le i\le n$ and consider 
\begin{align*}
\varphi_{s,r/2,i} &:= \norm{W_i^{(1)}W_i^{(2)} - W_{i,s}^{(1)}W_{i,s}^{(2)}}_{r/2}\\
&= \norm{W_{i}^{(1)}\left[W_i^{(2)} - W_{i,s}^{(2)}\right] + W_{i,s}^{(2)}\left[W_{i}^{(1)} - W_{i,s}^{(1)}\right]}_{r/2}\\
&\le \norm{W_{i}^{(1)}\left[W_i^{(2)} - W_{i,s}^{(2)}\right]}_{r/2} + \norm{W_{i,s}^{(2)}\left[W_{i}^{(1)} - W_{i,s}^{(1)}\right]}_{r/2}\\
&\le \norm{W_i^{(1)}}_r\norm{W_i^{(2)} - W_{i,s}^{(2)}}_{r} + \norm{W_{i,s}^{(2)}}_r\norm{W_i^{(1)} - W_{i,s}^{(1)}}_r\\
&\le \norm{W_i^{(1)}}_r\delta_{k,r}^{(2)} + \norm{W_{i,s}^{(2)}}_r\delta_{k,r}^{(1)}.
\end{align*}
Since $\varepsilon_{i-k}'$ is identically distributed as $\varepsilon_{i-k}$, $\norm{W_{i,s}^{(2)}}_r = \norm{W_{i}^{(2)}}_r$. So, an upper bound on the dependence adjusted norm can be obtained as
\begin{align*}
\Delta_{m,r/2} = \sum_{k=m}^{\infty} \max_{1\le i\le n} \,\varphi_{k,r/2,i} &\le \max_{1\le i\le n}\norm{W_i^{(1)}}_r\sum_{k=m}^{\infty} \delta_{k,r}^{(2)} + \max_{1\le i\le n}\norm{W_{i}^{(2)}}_r\sum_{k = m}^{\infty} \delta_{k,r}^{(1)}\\
&\le \max_{1\le i\le n} \norm{W_i^{(1)}}_r\Delta_{m,r}^{(2)} + \max_{1\le i\le n}\norm{W_i^{(2)}}_{r}\Delta_{m,r}^{(1)},
\end{align*}
and thus,
\begin{align*}
\norm{\{W\}}_{r/2,\nu} &\le \max_{1\le i\le n} \norm{W_i^{(1)}}_r\norm{\{W^{(2)}\}}_{r,\nu} + \max_{1\le i\le n}\norm{W_i^{(2)}}_{r}\norm{\{W^{(1)}\}}_{r,\nu}\\
&\le \norm{\{W^{(1)}\}}_{r,0}\norm{\{W^{(2)}\}}_{r,\nu} + \max_{1\le i\le n}\left|\mathbb{E}\left[W_i^{(1)}\right]\right|\norm{\{W^{(2)}\}}_{r,\nu}\\ &\quad+ \norm{\{W^{(2)}\}}_{r,0}\norm{\{W^{(1)}\}}_{r,\nu} + \max_{1\le i\le n}\left|\mathbb{E}\left[W_i^{(2)}\right]\right|\norm{\{W^{(1)}\}}_{r,\nu},
\end{align*}
proving the result.
\end{proof}
\section{Proof of Proposition~\ref{prop:UniformNonSing}}\label{appsec:propUniformNonSing}
\begin{proof}
It is easy to see that
\begin{align*}
\RIP(k, \Sigma_1 - \Sigma_2) &= \sup_{\substack{\theta\in\mathbb{R}^p, \norm{\theta}_0 \le k,\\\norm{\theta}_2 \le 1}}\left|\theta^{\top}\left(\Sigma_1 - \Sigma_2\right)\theta\right|\\
&\le \sup_{\substack{\theta\in\mathbb{R}^p,\\\norm{\theta}_0 \le k, \norm{\theta}_2 \le 1}}\norm{\theta}_1^2\vertiii{{\Sigma}_1 - \Sigma_2}_{\infty} \le k\vertiii{{\Sigma}_1 - \Sigma_2}_{\infty}.
\end{align*}
Here we have used inequalities \eqref{eq:MatrixVectorIneq}. A similar proof implies the second result.
\end{proof}
\bibliographystyle{apalike}
\bibliography{AssumpLean}

\begin{thebibliography}{}

\bibitem[Adamczak, 2008]{Adam08}
Adamczak, R. (2008).
\newblock A tail inequality for suprema of unbounded empirical processes with
  applications to {M}arkov chains.
\newblock {\em Electronic Journal of Probability}, 13:no. 34, 1000--1034.

\bibitem[Bachoc et~al., 2018]{bachoc2018post}
Bachoc, F., Blanchard, G., and Neuvial, P. (2018).
\newblock On the post selection inference constant under restricted isometry
  properties.
\newblock {\em Electronic Journal of Statistics}, 12(2):3736--3757.

\bibitem[Bachoc et~al., 2019]{bachoc2019valid}
Bachoc, F., Leeb, H., and P{\"o}tscher, B.~M. (2019).
\newblock Valid confidence intervals for post-model-selection predictors.
\newblock {\em The Annals of Statistics}, 47(3):1475--1504.

\bibitem[Bachoc et~al., 2020]{Bac16}
Bachoc, F., Preinerstorfer, D., Steinberger, L., et~al. (2020).
\newblock Uniformly valid confidence intervals post-model-selection.
\newblock {\em The Annals of Statistics}, 48(1):440--463.

\bibitem[Belloni and Chernozhukov, 2013]{Belloni13}
Belloni, A. and Chernozhukov, V. (2013).
\newblock Least squares after model selection in high-dimensional sparse
  models.
\newblock {\em Bernoulli. Official Journal of the Bernoulli Society for
  Mathematical Statistics and Probability}, 19(2):521--547.

\bibitem[Belloni et~al., 2018]{belloni2018high}
Belloni, A., Chernozhukov, V., Chetverikov, D., Hansen, C., and Kato, K.
  (2018).
\newblock High-dimensional econometrics and regularized {GMM}.
\newblock Technical report, Centre for Microdata Methods and Practice,
  Institute for Fiscal Studies.

\bibitem[Buja et~al., 2019]{Buja14}
Buja, A., Brown, L., Berk, R., George, E., Pitkin, E., Traskin, M., Zhang, K.,
  Zhao, L., et~al. (2019).
\newblock Models as approximations i: Consequences illustrated with linear
  regression.
\newblock {\em Statistical Science}, 34(4):523--544.

\bibitem[Cai and Yuan, 2012]{Cai12}
Cai, T.~T. and Yuan, M. (2012).
\newblock Adaptive covariance matrix estimation through block thresholding.
\newblock {\em Ann. Statist.}, 40(4):2014--2042.

\bibitem[Catoni, 2012]{Catoni12}
Catoni, O. (2012).
\newblock Challenging the empirical mean and empirical variance: a deviation
  study.
\newblock {\em Annales de l'Institut Henri Poincar\'e Probabilit\'es et
  Statistiques}, 48(4):1148--1185.

\bibitem[{Catoni} and {Giulini}, 2017]{Catoni17}
{Catoni}, O. and {Giulini}, I. (2017).
\newblock {Dimension-free PAC-Bayesian bounds for matrices, vectors, and linear
  least squares regression}.
\newblock {\em ArXiv e-prints}.

\bibitem[Chakrabortty et~al., 2021]{chakrabortty18}
Chakrabortty, A., Nandy, P., and Li, H. (2021).
\newblock Inference for individual mediation effects and interventional effects
  in sparse high-dimensional causal graphical models.
\newblock {\em arXiv preprint arXiv:1809.10652}.

\bibitem[Chen et~al., 2013]{Chen13}
Chen, Y., Caramanis, C., and Mannor, S. (2013).
\newblock Robust sparse regression under adversarial corruption.
\newblock In Dasgupta, S. and McAllester, D., editors, {\em Proceedings of the
  30th International Conference on Machine Learning}, volume~28 of {\em
  Proceedings of Machine Learning Research}, pages 774--782, Atlanta, Georgia,
  USA. PMLR.

\bibitem[de~la Pe{\~n}a and Gin{\'e}, 1999]{DeLaPena99}
de~la Pe{\~n}a, V.~H. and Gin{\'e}, E. (1999).
\newblock {\em Decoupling}.
\newblock Probability and its Applications (New York). Springer-Verlag, New
  York.

\bibitem[Foygel and Srebro, 2011]{Barb11}
Foygel, R. and Srebro, N. (2011).
\newblock Fast-rate and optimistic-rate error bounds for l1-regularized
  regression.
\newblock {\em arXiv preprint arXiv:1108.0373}.

\bibitem[Giessing, 2018]{giessing2018high}
Giessing, A. (2018).
\newblock On high-dimensional misspecified quantile regression.
\newblock {\em PhD Thesis, University of Michigan}.

\bibitem[Javanmard and Montanari, 2018]{javanmard2018debiasing}
Javanmard, A. and Montanari, A. (2018).
\newblock Debiasing the lasso: Optimal sample size for {G}aussian designs.
\newblock {\em The Annals of Statistics}, 46(6A):2593--2622.

\bibitem[Kuchibhotla, 2018]{kuchibhotla2018Deter}
Kuchibhotla, A.~K. (2018).
\newblock Deterministic inequalities for smooth m-estimators.
\newblock {\em arXiv preprint arXiv:1809.05172}.

\bibitem[Kuchibhotla et~al., 2020]{kuchibhotla2018valid}
Kuchibhotla, A.~K., Brown, L.~D., Buja, A., Cai, J., George, E.~I., Zhao,
  L.~H., et~al. (2020).
\newblock Valid post-selection inference in model-free linear regression.
\newblock {\em Annals of Statistics}, 48(5):2953--2981.

\bibitem[{Kuchibhotla} et~al., 2018]{2018arXiv180205801K}
{Kuchibhotla}, A.~K., {Brown}, L.~D., {Buja}, A., {George}, E.~I., and {Zhao},
  L. (2018).
\newblock {A Model Free Perspective for Linear Regression: Uniform-in-model
  Bounds for Post Selection Inference}.
\newblock {\em ArXiv e-prints 1802.05801v1}.

\bibitem[Kuchibhotla and Chakrabortty, 2020]{KuchAbhi17}
Kuchibhotla, A.~K. and Chakrabortty, A. (2020).
\newblock Moving beyond sub-{G}aussianity in high-dimensional statistics:
  Applications in covariance estimation and linear regression.
\newblock {\em arXiv preprint arXiv:1804.02605}.

\bibitem[Kuchibhotla et~al., 2021]{kuchibhotla2021high}
Kuchibhotla, A.~K., Mukherjee, S., and Banerjee, D. (2021).
\newblock High-dimensional clt: Improvements, non-uniform extensions and large
  deviations.
\newblock {\em Bernoulli}, 27(1):192--217.

\bibitem[Leeb and P{\"o}tscher, 2005]{leeb2005model}
Leeb, H. and P{\"o}tscher, B.~M. (2005).
\newblock Model selection and inference: Facts and fiction.
\newblock {\em Econometric Theory}, pages 21--59.

\bibitem[Leeb and P{\"o}tscher, 2006a]{Leeb06}
Leeb, H. and P{\"o}tscher, B.~M. (2006a).
\newblock Can one estimate the conditional distribution of post-model-selection
  estimators?
\newblock {\em Ann. Statist.}, 34(5):2554--2591.

\bibitem[Leeb and P{\"o}tscher, 2006b]{leeb2006performance}
Leeb, H. and P{\"o}tscher, B.~M. (2006b).
\newblock Performance limits for estimators of the risk or distribution of
  shrinkage-type estimators, and some general lower risk-bound results.
\newblock {\em Econometric Theory}, pages 69--97.

\bibitem[Leeb and P{\"o}tscher, 2008]{leeb2008can}
Leeb, H. and P{\"o}tscher, B.~M. (2008).
\newblock Can one estimate the unconditional distribution of
  post-model-selection estimators?
\newblock {\em Econometric Theory}, pages 338--376.

\bibitem[Liu and Wu, 2010]{Liu10}
Liu, W. and Wu, W.~B. (2010).
\newblock Asymptotics of spectral density estimates.
\newblock {\em Econometric Theory}, 26(4):1218--1245.

\bibitem[Liu et~al., 2013]{Liu13}
Liu, W., Xiao, H., and Wu, W.~B. (2013).
\newblock Probability and moment inequalities under dependence.
\newblock {\em Statistica Sinica}, 23(3):1257--1272.

\bibitem[Loh and Wainwright, 2012]{Loh12}
Loh, P.-L. and Wainwright, M.~J. (2012).
\newblock High-dimensional regression with noisy and missing data: provable
  guarantees with nonconvexity.
\newblock {\em The Annals of Statistics}, 40(3):1637--1664.

\bibitem[Minsker, 2015]{Minsker15}
Minsker, S. (2015).
\newblock Geometric median and robust estimation in {B}anach spaces.
\newblock {\em Bernoulli. Official Journal of the Bernoulli Society for
  Mathematical Statistics and Probability}, 21(4):2308--2335.

\bibitem[Monahan, 2008]{monahan2008primer}
Monahan, J.~F. (2008).
\newblock {\em A primer on linear models}.
\newblock CRC Press.

\bibitem[Plan and Vershynin, 2013]{Plan13}
Plan, Y. and Vershynin, R. (2013).
\newblock One-bit compressed sensing by linear programming.
\newblock {\em Communications on Pure and Applied Mathematics},
  66(8):1275--1297.

\bibitem[Pollard, 1990]{Pollard90}
Pollard, D. (1990).
\newblock {\em Empirical processes: theory and applications}.
\newblock NSF-CBMS Regional Conference Series in Probability and Statistics, 2.
  Institute of Mathematical Statistics, Hayward, CA; American Statistical
  Association, Alexandria, VA.

\bibitem[P\"otscher and Prucha, 1997]{Potscher97}
P\"otscher, B.~M. and Prucha, I.~R. (1997).
\newblock {\em Dynamic nonlinear econometric models}.
\newblock Springer-Verlag.
\newblock Asymptotic theory.

\bibitem[Raskutti et~al., 2011]{Raskutti11}
Raskutti, G., Wainwright, M.~J., and Yu, B. (2011).
\newblock Minimax rates of estimation for high-dimensional linear regression
  over {$\ell_q$}-balls.
\newblock {\em Institute of Electrical and Electronics Engineers. Transactions
  on Information Theory}, 57(10):6976--6994.

\bibitem[Rinaldo et~al., 2019]{Rin16}
Rinaldo, A., Wasserman, L., G’Sell, M., et~al. (2019).
\newblock Bootstrapping and sample splitting for high-dimensional,
  assumption-lean inference.
\newblock {\em Annals of Statistics}, 47(6):3438--3469.

\bibitem[Rio, 2009]{Rio09}
Rio, E. (2009).
\newblock Moment inequalities for sums of dependent random variables under
  projective conditions.
\newblock {\em Journal of Theoretical Probability}, 22(1):146--163.

\bibitem[Romano and Wolf, 2000]{RomanoWolf00}
Romano, J.~P. and Wolf, M. (2000).
\newblock A more general central limit theorem for {$m$}-dependent random
  variables with unbounded {$m$}.
\newblock {\em Statistics \& Probability Letters}, 47(2):115--124.

\bibitem[Serfling, 1980]{Serfling80}
Serfling, R.~J. (1980).
\newblock {\em Approximation theorems of mathematical statistics}.
\newblock John Wiley \&\ Sons, Inc., New York.
\newblock Wiley Series in Probability and Mathematical Statistics.

\bibitem[van~der Vaart and Wellner, 1996]{VdvW96}
van~der Vaart, A.~W. and Wellner, J.~A. (1996).
\newblock {\em Weak convergence and empirical processes}.
\newblock Springer Series in Statistics. Springer-Verlag.
\newblock With applications to statistics.

\bibitem[van Zwet, 1984]{vanZwet84}
van Zwet, W.~R. (1984).
\newblock A {B}erry-{E}sseen bound for symmetric statistics.
\newblock {\em Z. Wahrsch. Verw. Gebiete}, 66(3):425--440.

\bibitem[Vershynin, 2012]{Ver12}
Vershynin, R. (2012).
\newblock How close is the sample covariance matrix to the actual covariance
  matrix?
\newblock {\em Journal of Theoretical Probability}, 25(3):655--686.

\bibitem[Vershynin, 2018]{Vershynin18}
Vershynin, R. (2018).
\newblock {\em High-dimensional probability: An introduction with applications
  in data science}, volume~47.
\newblock Cambridge university press.

\bibitem[Wei and Minsker, 2017]{Wei17}
Wei, X. and Minsker, S. (2017).
\newblock Estimation of the covariance structure of heavy-tailed distributions.
\newblock In Guyon, I., Luxburg, U.~V., Bengio, S., Wallach, H., Fergus, R.,
  Vishwanathan, S., and Garnett, R., editors, {\em Advances in Neural
  Information Processing Systems 30}, pages 2855--2864. Curran Associates, Inc.

\bibitem[White, 1980a]{white1980heteroskedasticity}
White, H. (1980a).
\newblock A heteroskedasticity-consistent covariance matrix estimator and a
  direct test for heteroskedasticity.
\newblock {\em Econometrica: Journal of the Econometric Society}, pages
  817--838.

\bibitem[White, 1980b]{white1980using}
White, H. (1980b).
\newblock Using least squares to approximate unknown regression functions.
\newblock {\em International Economic Review}, pages 149--170.

\bibitem[White, 2001]{White2001}
White, H. (2001).
\newblock {\em Asymptotic Theory for Econometricians}.
\newblock Economic theory, econometrics, and mathematical economics. Academic
  Press.

\bibitem[Wu, 2005]{Wu05}
Wu, W.~B. (2005).
\newblock Nonlinear system theory: another look at dependence.
\newblock {\em Proceedings of the National Academy of Sciences of the United
  States of America}, 102(40):14150--14154.

\bibitem[Wu and Mielniczuk, 2010]{Wu10}
Wu, W.~B. and Mielniczuk, J. (2010).
\newblock A new look at measuring dependence.
\newblock In Doukhan, P., Lang, G., Surgailis, D., and Teyssi{\`e}re, G.,
  editors, {\em Dependence in Probability and Statistics}, pages 123--142.
  Springer Berlin Heidelberg, Berlin, Heidelberg.

\bibitem[Wu and Wu, 2016]{WuWu16}
Wu, W.-B. and Wu, Y.~N. (2016).
\newblock Performance bounds for parameter estimates of high-dimensional linear
  models with correlated errors.
\newblock {\em Electronic Journal of Statistics}, 10(1):352--379.

\bibitem[Zhang and Wu, 2017]{ZhangWu17}
Zhang, D. and Wu, W.~B. (2017).
\newblock {G}aussian approximation for high dimensional time series.
\newblock {\em The Annals of Statistics}, 45(5):1895--1919.

\bibitem[Zhang and Cheng, 2014]{Zhang14}
Zhang, X. and Cheng, G. (2014).
\newblock Bootstrapping high dimensional time series.
\newblock {\em arXiv preprint arXiv:1406.1037}.

\end{thebibliography}
\end{document}